\documentclass[11pt,a4paper]{article}
\usepackage{bbm}

\usepackage{graphicx, color, tikz}
\graphicspath{ {./images/} }
\usepackage{wrapfig}
\usepackage{multicol}
\usepackage[mathscr]{euscript}
\usepackage{subcaption}
\usepackage{hyperref}
\usepackage{epstopdf}
\usepackage{enumerate}
\usepackage{amsmath,amsfonts,amssymb,amsthm,epsfig,epstopdf,titling,url,array}
\usepackage{color}
\usepackage{framed}
\usepackage[utf8]{inputenc}
\usepackage[english]{babel}

\usepackage{xparse}
\usepackage[margin=2.5cm]{geometry}
\usepackage[most]{tcolorbox}
\usepackage[export]{adjustbox}

\NewDocumentCommand{\INTERVALINNARDS}{ m m }{
	#1 {,} #2
}
\NewDocumentCommand{\interval}{ s m >{\SplitArgument{1}{,}}m m o }
{
	\IfBooleanTF{#1}{
		\left#2 \INTERVALINNARDS #3 \right#4
	}{
		\IfValueTF{#5}{
			#5{#2} \INTERVALINNARDS #3 #5{#4}
		}{
			#2 \INTERVALINNARDS #3 #4
		}
	}
}

\newtheorem{theorem}{Theorem}[section]
\newtheorem{corollary}{Corollary}[theorem]
\newtheorem{definition}{Definition}[section]
\newtheorem{lemma}[theorem]{Lemma}
\newtheorem{example}{Example}[section]
\newtheorem{prop}[theorem]{Proposition}

\newtheorem{Remark}{Remark}[section]

\newtheorem{Question}{Question}

\usepackage{authblk}
\begin{document} 
	\title{\textbf{A Survey of Baker Wandering Domains}}
	\author{Sukanta Das\footnote{a21ma09005@iitbbs.ac.in}}
	\author[2]{Tarakanta Nayak\footnote{Corresponding author, tnayak@iitbbs.ac.in}}
	\affil[1,2]{Department of Mathematics, Indian Institute of Technology Bhubaneswar, Bhubaneswar, India}
	\date{}
	\maketitle
	\begin{abstract}
		Let $f:\mathbb C\to \widehat{\mathbb C}=\mathbb C \cup\{\infty\}$ be a transcendental meromorphic function (possibly without any pole) with a single essential singularity, and that is chosen to be at $\infty$. The set of points $z\in\mathbb{{\widehat{C}}}$ such that the family of iterates $\{f^n\}_{n\geq 0}$ is defined and forms a normal family in a neighborhood of $z$ is known as the Fatou set of $f$. For a Fatou component $W$, let   $W_j$ denote  the Fatou component containing $f^j(W)$. A Fatou component $W$ is called wandering if $W_m\bigcap W_n=\emptyset$ for all $m \neq n$.  A wandering domain $W$ of  $f$ is called a Baker wandering domain, if each $W_n$ is bounded, multiply connected, and $W_n$ surrounds $0$ for all large $n$ and, dist$(W_n,0)\to\infty$ as $n\to\infty$.
		This paper surveys the current state of knowledge on Baker wandering domains. We revisit the first example of the Baker wandering domain followed by other examples. The influence of Baker wandering domain on the singular values and dynamics of the function is presented.  We also  discuss some classes of functions that do not possess any Baker wandering domain. Several problems are proposed throughout the article at relevant places.
	\end{abstract}
	\textit{Keywords:}
	Transcendental meromorphic functions; Baker wandering domains; Singular values; Baker omitted value.\\
	
	AMS Subject Classification: 37F10, 30D05
	
	\section{Introduction}
	Let $f:\mathbb C\to \widehat{\mathbb C}=\mathbb C \cup\{\infty\}$ be a transcendental meromorphic function with a single essential singularity, and that is chosen to be at $\infty$. The only other type of possible singularity of such a function is a pole. Throughout this article, by a meromorphic function, we mean a transcendental meromorphic function. Such a function can be without any   pole, and in that case, it is called entire. 
Depending on the behaviour of the iterates $f^n$ ($n-$times composition of $f$ with itself), the Riemann sphere $\mathbb{{\widehat{C}}}$ is partitioned into two sets: the Fatou set and the Julia set. The Fatou set of $f$ is the set of all points in $\mathbb{{\widehat{C}}}$ such that the family $\{f^n\}_{n\geq 1}$ is normal in a neighborhood of $z$. The Julia set, denoted by $\mathcal{J}(f)$, is the complement of the Fatou set in $\widehat{\mathbb{C}}$. If $f^n$ is not defined at a point $z$ for some $n 
\geq 1$, then we take $z$ to be in the Julia set. More precisely, the point $\infty$ and its backward orbit $\{z: f^n (z) =\infty ~\mbox{for some}~n \geq 0 \}$ is contained in the Julia set of every  meromorphic function. In particular, all the poles, whenever these exist are in the Julia set. We say a subset $A$ of $\widehat{\mathbb{C}}$ is completely invariant under $f$ if it is forward-invariant  i.e., $f(A) \subseteq A$ as well as backward invariant i.e., $f^{-1} (A) \subseteq A$. The Fatou as well as the Julia set are completely invariant under the function. A maximally connected subset of the Fatou set is called a Fatou component. For a Fatou component $V$, we denote the Fatou component containing $f^k(V)$ by $V_k$ for each $k\geq 0$, where $V_0=V$. A Fatou component $V$ is periodic if $V_p =V$ for some $p$, where the smallest such $p$ is known as the period of $V$. A periodic Fatou component can be an attracting domain, a parabolic domain, a Siegel disk, a Herman ring or a Baker domain.  Further details on the periodic Fatou components can be found in \cite{Berg 1993}.
\par  A $p$-periodic Fatou component $V$ is called a Baker domain of $f$ if $\lim_{n \to \infty}f^{np} (z) = z^*$ for all $z \in V$ where $z^*$ is an essential singularity of $f^p$. Here $z^*$ is such that $f^k (z^*)=\infty$ for some $0 \leq k \leq p-1$. It is indeed a fact that $V_k$ is unbounded for some $k$ whenever $V$ is a Baker domain (see Theorem 13, \cite{Berg 1993}). Rational maps (without any essential singularity) cannot have any Baker domain in their Fatou sets. 

		A Fatou component $W$ is a wandering domain if $W_m\bigcap W_n=\emptyset$ for all $m\neq n$.  The connectivity of a Fatou component $U$, denoted by $c(U)$ is the number of components of $\widehat{\mathbb{C}} \setminus U$. We say $U$ is multiply connected whenever $c(U)>1$. In 1963, Baker constructed an entire function $f$ possessing a multiply connected Fatou component \cite{Baker 1963}. He constructed a sequence of concentric annuli  $\{A_n\}_{n\geq 1}$ such that $A_{n+1}$ surrounds $A_n$, $f(A_n)\subseteq A_{n+1}$ for all $n$, and   $\lim_{n \to \infty} f^n(z) = \infty$ for all $z\in A_1$. Here and now onwards, for two subsets $A,B$ of $\mathbb{C}$, we say $A$ surrounds  $B$ if there exists a bounded component of $\mathbb{{\widehat{C}}}\setminus A$   containing $B$. Each $A_n$ was shown to be contained in a multiply connected Fatou component of $f$. At that time, Baker couldn't assert whether the Fatou components containing these distinct annuli are distinct or not. Later in 1976, he himself proved that these are distinct \cite{Baker 1976}. This is the discovery of wandering domains. Before a year in 1975, Baker established that a transcendental entire function cannot have any unbounded multiply connected Fatou component \cite{Baker 1975}. In particular, the wandering domains constructed by him were bounded. 
 Later in 1985, Sullivan  proved that rational maps cannot have any wandering domain (see Theorem 1, \cite{Sullivan 1985}). 
  After that the search for wandering domains completely shifted to transcendental functions.
  \par  A wandering domain of an entire function can be simply or multiply connected (see~\cite{Baker 1976,Baker 1984}), and bounded or unbounded (see \cite{EremenkoLyubich 1987}). The limit functions of $\{f^n\}_{n>0}$ on a wandering domain are always constant (see Lemma 2.1, \cite{bk3}). Let $L(W)$  denote the set of all the limit functions of $\{f^n\}_{n>0}$ on a wandering domain $W$. A wandering domain is called escaping, oscillating or dynamically bounded if $L(W) =\{\infty\}, $ contains $\infty$ and at least one finite complex number or is a bounded set, respectively. The first example of wandering domains, given by Baker as mentioned above, are  multiply connected, bounded  and escaping. This is the motivation for the following definition.  
  
	\begin{definition}[Baker wandering domain]
		A wandering domain $W$ of a meromorphic function $f$ is called a Baker wandering domain, \textit{BWD} in short,  if for all sufficiently large $n$, $W_n$ is multiply connected, bounded and surrounds $0$ such that $f^n(z)\to \infty$ as $n\to \infty$ for all $z\in W$.
	\end{definition}
Though the first example of BWD was for an entire function, these can actually coexist along with poles (see Subsection~\ref{meromorphic}). In 2000, Rippon and Stallard introduced the name {\it{Baker wandering domain}} (see \cite{RipponStallard 2000}) after 37 years of its construction. This is the primary object of this article. 

\par A function  with a BWD, from a dynamical perspective  is far from being simple. To make this more precise, we recall  some definitions. A complex number $c$ is called a critical point of $f$ if  $f'(c)=0$ or $c$ is a multiple pole of $f$. A critical value of $f$ is the image of a critical point. For some $a \in \widehat{\mathbb{C}}$, if  $\lim_{t \to \infty}f(\gamma(t)) =a$  along a curve $\gamma:[0,\infty)\to\mathbb{C}$ such that $\lim_{t \to \infty}\gamma(t) = \infty$, then $a$ is called an asymptotic value of $f$. A singular value is a critical value, an asymptotic value or a limit point of these values. The set of all singular values of $f$ is denoted by $sing(f^{-1})$ and is well-known to control several aspects of dynamics of $f$. A detailed treatment of singular values is presented in Subsection~\ref{singularvalues-section}.
\par  The famous Eremenko-Lyubich class, denoted by $\mathcal{B}$ is the set of all entire functions for which $sing(f^{-1})$ is bounded. Most of the research undertaken so far on dynamics of meromorphic functions are focussed on $\mathcal{B}$. One main tool developed and extensively used for studying these functions is the logarithmic change of variables (see \cite{EremenkoLyubich 1992,Rempe2017, RRRS 2011, Sixsmith2018}). This idea has been developed further  by Rippon and Stallard to study the dynamics of meromorphic functions (see \cite{RipponStallard 1999}). However, this tool does not work when $sing(f^{-1})$ is unbounded. To the best of our knowledge, functions of unbounded type are not well-understood in any reasonable way. It seems desirable to start with a \textit{tractable} subclass - to be made precise soon, and this is where BWDs come into picture.
\par  If $f$ is a meromorphic function with BWD then $sing^{-1}(f)$ is unbounded. A proof for entire functions can be found in Lemma 2.5, \cite{GhoraNayakSahoo 2021}. Theorem A, \cite{RipponStallard 1999} (for $n=1$)  gives that if  the set of all the finite singular values of $f$ is bounded then there is no Fatou component $U$ such that $\lim_{m \to \infty}f^m (z) =\infty$ for $z \in U$. But on every BWD, $f^m \to \infty$ as $m \to \infty$ (see also Theorem \ref{B-NoBWD}). Inspite of this, there is an intrinsic boundedness and the effect of the essential singularity is quite limited. We now elaborate this. 
\begin{itemize}
		\item \textit{Bounded Fatou components:}
	A BWD is not only bounded but its very presence ensures the boundedness of all other Fatou components, whenever they exist.  An immediate consequence is the absence of Baker domains since each Baker domain  itself or some of its iterated forward image is unbounded, as already observed in the second paragraph of this section. This is one of the important similarity with rational maps. Further, the function restricted to each of its Fatou components (including BWD) is \textit{proper} and that allows the use of the tools and techniques used for understanding the dynamics of rational maps. An important example of this phenomena is the study  of connectivity of Fatou components using the \textit{Riemann-Hurwitz formula}.
\item 	 \textit{Bounded Julia components:} 	A maximally connected subset of the Julia set is called a Julia component. In the presence of a BWD, the essential singularity (i.e., $\infty$) becomes a singleton component of the Julia set and is also a buried point, i.e., it is not in the boundary of any  Fatou component. In fact, every point in the backward orbit of $\infty$, i.e., $\{z \in \widehat{\mathbb{C}}: f^n (z)=\infty~\mbox{for some}~n\geq 0\}$ is a singleton and buried component of the Julia set whenever there is a BWD (see Proposition~\ref{pre-poles}). Further, this set is a completely invariant proper subset of the Julia set. In this way, the effect of the essential singularity on the dynamics of the function with a BWD becomes considerably limited.
\item   \textit{No finite asymptotic value:} There is no finite asymptotic value (see Proposition~\ref{unbounded-BWD}). Therefore $sing^{-1}(f)$ is actually the set of all critical values of $f$ and their limit points.
\end{itemize}  
 To summarize, functions with BWD can be the right candidates to start with for a systematic investigation of dynamics of unbounded type functions. This necessitates putting  all the important known facts on BWD together and in proper context. This is the motivation for this article. 
 \par We start with a detailed discussion of the first example of BWD given by Baker followed by other examples demonstrating similarities and differences with the former. Then the influence of BWD on the singular values and dynamics of the concerned function is presented. More precisely, we discuss in detail the singularity lying over $\infty$ and take up Julia components, escaping set (A point is called escaping for a meromorphic function $f$ if $\lim_{n \to \infty}f^n(z) = \infty$ but $f^n(z)\neq\infty $ for any $n$) and  eventual connectivity of BWDs for discussion. After that, functions that do not have any BWD are discussed. An important point to note is that most of the results known so far in this context are on entire functions or on meromorphic functions with finitely many poles.  A set of well-framed questions are also presented at suitable places that may guide the future direction of research.
 
\par The structure of the paper is as follows. In Section 2, we provide examples of meromorphic functions with Baker wandering domains along with some sufficient conditions for BWD. Section 3 discusses  several implications of Baker wandering domains in terms of the singular values and dynamics of the concerned function. Finally, Section 4 presents several criteria for functions ensuring non-existence of BWD. 
  \par Throughout this article, $A(r,R)$ represents the annulus $A(r, R)=\{z \in \mathbb{C}:r<|z|<R\}$. 
	\section{Examples and Sufficient conditions }
	Since the presence of poles gives rise to  situations that are qualitatively different from that in the absence of any pole, we present the examples   of Baker wandering domain in these two situations separately. All the examples are based on the first one constructed by Baker, which we revisit in the beginning of this section. We indicate and correct some minor errors in calculations in Baker’s  example \cite{Baker 1963, Baker 1984}. However, this does not affect the validity of the main arguments.
	
	\subsection{Entire functions}
	The following result published in 1984 provides a complete description of a BWD.
	\begin{theorem}(Theorem 3.1,\cite{Baker 1984}) \label{Thm1}
		If $U$ is a multiply connected Fatou component of an entire function $f$ then, $U$ is a wandering domain and $f^n(z)\to\infty$ for all $z\in U$ as $n\to\infty$. Further, for all sufficiently large $n$, $U_n$ contains a closed curve $\gamma_n$ whose distance from $0$ tends to $\infty$ as $n\to\infty$ and whose winding number from $0$ is non-zero. In this case, every Fatou component of $f$ (including those different from $U_n$) is bounded.	
	\end{theorem}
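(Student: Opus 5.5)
Let $U$ be a multiply connected Fatou component of the entire function $f$. Fix a closed Jordan curve $\gamma\subset U$ that is not null-homotopic in $U$. Then some bounded complementary component of $\gamma$ contains a point $z_0\in\mathcal{J}(f)$. The plan rests on the maximum principle, applied to the iterates on the bounded region $D$ enclosed by $\gamma$.

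\textbf{Step 1: $f^n\to\infty$ on $U$.} Suppose a subsequence $f^{n_k}$ converged on $U$ to a finite limit; normality gives locally uniform convergence. Then $|f^{n_k}|$ would be uniformly bounded on $\gamma$. By the maximum principle the same bound holds on $D$, since $f^{n_k}$ is entire. Montel's theorem would then make $\{f^{n_k}\}$ normal on $D$. This cannot be immediately contradicted, because only a subsequence is involved. Instead I use that $\mathcal{J}(f)$ is the closure of the repelling periodic points, or equivalently that $\bigcup_n f^{n}(V)$ omits at most one point for any open $V$ meeting $\mathcal{J}(f)$. Take $V\subset D$ a neighbourhood of $z_0$. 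The images $f^{n}(V)$ must be unbounded along every subsequence, since otherwise the family $\{f^n\}$ would be normal near $z_0$ along that subsequence. Combined with the maximum principle bound on $D$, this gives a contradiction. Some care is needed to pass from subsequences to the full family; I would handle this by taking every limit function, using Vitali-type arguments. Hence every limit function is $\infty$, so $f^n\to\infty$ locally uniformly on $U$.

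\textbf{Step 2: $U$ is wandering.} If $U$ were (pre)periodic, then $U_p=U$ would be a Baker domain. However, $f^n(\gamma)$ stays in the $U_n$, and I would argue that periodic components on which $f^n\to\infty$ are simply connected. This is a known result of Baker, but it can be seen directly: $f^{np}(\gamma)\subset U$ with $f^{np}\to\infty$ uniformly on $\gamma$. Let $z_0$ be a Julia point enclosed by $\gamma$. Then $f^{np}(\gamma)$ eventually surrounds $f^{np}(z_0)$ and is far from $0$. Using curves within $U$ joining $\gamma$ to $f^{p}(\gamma)$, this forces $U$ to contain curves surrounding arbitrarily large discs. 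Combining this with the maximum principle shows that $\{f^n\}$ is bounded on the Julia points inside, a contradiction.

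\textbf{Step 3: curves $\gamma_n$ winding about $0$, and boundedness of all components.} Set $\gamma_n=f^n(\gamma)\subset U_n$. Since $\min_{\gamma}|f^n|\to\infty$, we get $\mathrm{dist}(\gamma_n,0)\to\infty$. For the winding number, suppose $\gamma_n$ had winding number zero about $0$ for infinitely many $n$. By the maximum principle, $|f^n|$ on $D$ is at most $\max_\gamma|f^n|$. Applying the argument principle to $f^n-w$ for $|w|<\min_\gamma|f^n|$, the count of zeros in $D$ equals the winding number of $\gamma_n$ about $w$, which equals that about $0$. A zero count of $0$ would mean $|f^n|\ge \min_\gamma|f^n|$ throughout $D$, via the minimum principle, since $f^n\ne0$ on $D$. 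Then $f^n\to\infty$ uniformly on $D$ along that subsequence, contradicting $z_0\in\mathcal{J}(f)$, because Julia points have iterates (for example repelling periodic points nearby) that do not tend to $\infty$ uniformly. So the winding number is nonzero for all large $n$.

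Finally, any Fatou component $V$ is connected and disjoint from the $\gamma_n$, or contained in $U_n$. If $V$ were unbounded, it would have to lie in the unbounded complementary region of every $\gamma_n$ for large $n$. But those regions shrink towards $\infty$ while $V$ contains a fixed point, which is impossible. The case $V=U_n$ needs a separate argument that $U_n$ is bounded: the $\gamma_{m}$ with $m>n$ lie in other components and surround $U_n$.

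The main obstacle is Step 2 (and the subsequence issue in Step 1): excluding a periodic multiply connected component needs a genuinely dynamical input beyond the maximum principle.
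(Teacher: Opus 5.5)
\textbf{The gap is Step 2.} The paper gives no proof of its own: it only attributes the result to Baker and says the argument rests on the maximum modulus principle and the argument principle. Your Steps 1 and 3 are exactly that part. The maximum principle on the interior $D$ of $\gamma$ gives $f^n\to\infty$ on $U$. The argument principle applied to $f^n-w$ with $|w|<\min_\gamma|f^n|$ gives that $f^n(\gamma)$ winds around $0$ for large $n$.

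Step 1 needs a small repair. The sentence ``otherwise $\{f^n\}$ would be normal near $z_0$ along that subsequence'' is circular. The blow-up property of $\bigcup_n f^n(V)$ is not equivalent to density of repelling cycles and says nothing about subsequences. The correct reason is this: if $|f^{n_k}|\le M$ on $D$ and $w\in D$ is a repelling periodic point of period $p$, Cauchy's estimate bounds $|(f^{n_k})'(w)|$. But $|(f^{n_k})'(w)|\ge c\,|(f^p)'(w)|^{\lfloor n_k/p\rfloor}\to\infty$. No Vitali argument is needed.

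Your direct argument in Step 2 does not work. Knowing that a periodic $U=U_{kp}$ contains curves $\gamma_{kp}$ surrounding discs of radii tending to $\infty$ gives no contradiction via the maximum principle. Inside $\gamma_{kp}$ it only bounds $|f^n|$ by $\max_{\gamma_{kp}}|f^n|$, which tends to $\infty$. Even bounded orbits of Julia points would be no contradiction, since repelling cycles have them. An invariant domain containing curves that wind around $0$ and escape to $\infty$ is not excluded by topology or the maximum principle. It is excluded by a growth mismatch, and that dynamical input is what is missing.

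There are two ways to supply it. One is to cite Baker's 1975 theorem that a transcendental entire function has no unbounded multiply connected Fatou component. A periodic component on which $f^n\to\infty$ is unbounded, since it contains $f^{kp}(z)\to\infty$. And $U_m$ is multiply connected for large $m$, because $\gamma_m$ surrounds $z_0$.

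The other is to prove it directly:
\begin{enumerate}
\item Put $m_n=\min_\gamma|f^n|$. Since $\gamma_n$ winds around every point of $\{|z|<m_n\}$, the maximum principle gives $\max_\gamma|f^{n+1}|=\max_{\gamma_n}|f|\ge M(m_n,f)$.
\item Harnack's inequality for the positive harmonic functions $\log|f^n|$ on a fixed neighbourhood of $\gamma$ in $U$ gives $\log\max_\gamma|f^{n+1}|\le K\log m_{n+1}$, with $K$ independent of $n$.
\item Since $\log M(r,f)/\log r\to\infty$, this yields $\log m_{n+1}/\log m_n\to\infty$, hence $\log\log|f^n(z)|/n\to\infty$ for $z\in\gamma$.
\item This contradicts Lemma~\ref{growth-on-periodicFC} applied in the periodic component $U_m$ to the point $f^m(z)$. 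It is the same mechanism the paper uses in Example~\ref{BWD-infinitelymanypoles}.
\end{enumerate}

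This repair needs the winding statement of Step 3 before Step 2. The boundedness of all Fatou components at the end of Step 3 depends on $U$ being wandering. So the order of your steps must change accordingly.
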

	The proof of Theorem \ref{Thm1} uses the Maximum Modulus and the Argument Principle. In this theorem, each $U_n$ is bounded and multiply connected such that $U_{n+1}$ surrounds $U_n$ as well as $0$ for all large $n$, and $f^n(z)\to\infty$ as $n\to\infty$ for every $z\in U$. Hence, $U$ is a BWD. The converse is also true. 
	\begin{corollary}\label{BWD-iff-MCFC}
		A Fatou component of an entire function is a BWD if and only if it is multiply connected. In particular, a single bounded Julia component implies the existence of a BWD.
\end{corollary}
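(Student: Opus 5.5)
The equivalence has two directions, and the second sentence of the corollary needs a separate short argument.

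For the forward direction I only use the definition: if $W$ is a BWD, then $W_n$ is multiply connected for all large $n$. I want to pull this back to $W$ itself. For entire $f$, the map $f:W\to W_1$ is a surjective covering-type map between Fatou components. If $W$ were simply connected, the image $W_1$ could still be multiply connected in principle, so pulling back naively does not work. I would avoid this issue entirely. If $W$ is simply connected but some $W_n$ is multiply connected, then Theorem~\ref{Thm1} applies to $W_n$ and gives the wandering and escaping conclusions. The definition of BWD, however, only requires $W_n$ to be multiply connected for large $n$. So, read literally, the forward direction should be stated for the eventual components, or one notes the following. For entire $f$, a Fatou component whose image lies in a multiply connected Fatou component is itself multiply connected. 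This is a known consequence of the maximum modulus principle: a curve $\gamma$ in $W_1$ that is not null-homotopic in $W_1$ surrounds points of $\mathcal J(f)$. A preimage argument, using that $f^{-1}$ of the bounded complementary component cannot be contained in a simply connected $W$ without violating $f(\partial D)\subset\partial f(D)$ on a disc $D\subset W$, gives multiple connectivity of $W$. I expect this step to be the main obstacle. The cleanest route is Baker's result that preimages of multiply connected components of entire functions are multiply connected, and I would cite it here.

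For the converse, let $U$ be a multiply connected Fatou component of entire $f$. By Theorem~\ref{Thm1}, $U$ is wandering and $f^n\to\infty$ on $U$. Every Fatou component, in particular each $U_n$, is bounded. Each $U_n$ is multiply connected, either by applying the same theorem's conclusion that $U_n$ contains a curve $\gamma_n$ of nonzero winding number about $0$, or by boundedness together with that curve. Since $\gamma_n$ winds around $0$ and $\operatorname{dist}(\gamma_n,0)\to\infty$, and since $0$ lies in the Julia set or in another component for large $n$ (as $U_n$ escapes), $U_n$ surrounds $0$. Hence $U$ is a BWD.

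For the last sentence, suppose $\mathcal J(f)$ has a bounded component $K$. Since $\mathcal J(f)$ is closed, $K$ is a compact component. Standard plane topology, via a Zoretti / boundary bumping type separation, then gives a Jordan curve in $\mathcal F(f)$ separating $K$ from $\infty$. Because $\mathcal J(f)$ is unbounded for transcendental $f$ (it contains points accumulating at $\infty$), this curve lies in a Fatou component $U$ whose complement has a bounded piece containing $K$ and an unbounded piece. So $U$ is multiply connected, and by the first part it is a BWD.
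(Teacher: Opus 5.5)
Your proof is correct. For the substantive direction (multiply connected $\Rightarrow$ BWD) it is exactly the paper's argument: in the paragraph before the corollary the paper reads the BWD properties off Theorem~\ref{Thm1}, namely wandering, escaping, all components bounded, and $\gamma_n\subset U_n$ winding around $0$. It then merely asserts that ``the converse is also true'', and gives no argument for the converse or for the sentence about bounded Julia components.

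You go beyond the paper in two useful ways. First, you notice that the definition only makes $W_n$ multiply connected for large $n$, so BWD $\Rightarrow$ multiply connected genuinely requires pulling multiple connectivity back to $W=W_0$. Second, you supply the Zoretti separation argument for the last sentence. To apply it, work in $\widehat{\mathbb{C}}$ with the compact set $\mathcal{J}(f)\ni\infty$, and keep the curve close enough to $K$ that it separates $K$ from $\infty$. One small correction in the converse: $0\notin U_n$ for large $n$ because the $U_n$ are pairwise disjoint, not because they escape.

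The one thing to fix is your justification of the pullback step. The maximum-modulus sketch is not the mechanism: a holomorphic map can send a simply connected domain onto a multiply connected one, e.g.\ $\exp$ maps $\{\operatorname{Re} z<0\}$ onto the punctured unit disc. You also do not need an outside citation. The key input is boundedness, and it comes from Theorem~\ref{Thm1} itself:

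\begin{enumerate}
\item Applying Theorem~\ref{Thm1} to a multiply connected $W_n$ shows that every Fatou component is bounded.
\item Hence $f:W_k\to W_{k+1}$ is proper, as the paper itself remarks in its subsection on eventual connectivity.
\item Using $f(\mathcal{J}(f)\cap\mathbb{C})\subset\mathcal{J}(f)$, it follows that $f(\partial W_k)=\partial W_{k+1}$.
\item If $W_k$ were simply connected, $\partial W_k$ would be connected, hence so would $\partial W_{k+1}$. That forces $W_{k+1}$ to be simply connected, a contradiction.
\end{enumerate}

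Downward induction from a large $n$ then shows that $W$ is multiply connected. Alternatively, combine Riemann--Hurwitz with Bolsch's result quoted in the paper.
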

There is an easy lemma that is to be used in the examples.
	\begin{lemma}
		Let $f$ be entire and $r_n$ be a sequence of positive real numbers such that $\lim_{n \to \infty}r_n =\infty$. If   $A_n = A(r_n, R_n)$ for $ R_n >r_n $ denotes the annulus and $f(A_n) \subseteq A_{n+1}$ for each $n \geq 1$, then there is a BWD containing $A_n$.
		\label{BWD-lem}	\end{lemma}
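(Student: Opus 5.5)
Set $A=\bigcup_{n\ge1}A_n$. The plan has three steps: show $A_1$ lies in the Fatou set via Montel's theorem, show the Fatou component containing it is multiply connected, and then invoke Corollary~\ref{BWD-iff-MCFC} to conclude that it is a BWD.

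\textbf{Normality.} From $f(A_n)\subseteq A_{n+1}$ we get by induction $f^k(A_1)\subseteq A_{k+1}$ for every $k\ge 0$. Hence $|f^k(z)|>r_{k+1}\to\infty$ for every $z\in A_1$.

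Normality itself needs a little care. Since $r_n\to\infty$, only finitely many $n$ have $r_n\le 1$. If $r_n>1$ for all $n$, then every $A_n$ misses the closed unit disk, so on $A_1$ the family $\{f^k\}_{k\ge0}$ omits every value of $\overline{\mathbb D}$ (in particular $0$ and $1/2$). Montel's theorem then shows that $\{f^k\}$ is normal on $A_1$, so $A_1$ lies in the Fatou set. If finitely many $r_n\le1$, apply the same argument to $A_N$ for large $N$, whose forward images all lie in $\{|z|>1\}$. Then pull back: $f^{N-1}$ maps $A_1$ into $A_N$, which is contained in the Fatou set, and the Fatou set is completely invariant. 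So $A_1$ lies in a Fatou component $U$, and likewise each $A_n$ lies in a Fatou component $U^{(n)}$. On $U$ every limit function is $\infty$, since $f^k\to\infty$ on the open subset $A_1$.

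\textbf{Multiple connectivity.} This is the main step. Suppose $U$ were simply connected. Then the circle $\gamma=\{|z|=\rho\}\subset A_1$ together with its interior disk $D$ would lie in $U$. Since $f^k\to\infty$ locally uniformly on $U$, we get $\min_{\overline D}|f^k|>r_{k+1}$ for all large $k$.

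On the other hand, the maximum modulus principle gives $\max_{\overline D}|f^k|=\max_{\gamma}|f^k|<R_{k+1}$. So $f^k(\overline D)$ is contained in $\{r_{k+1}<|w|<R_{k+1}\}=A_{k+1}$, a region that does not contain $0$. This is not yet a contradiction, so I would use the winding argument instead. The image curve $f^k(\gamma)$ lies in $A_{k+1}$. Because $f^k$ has no zeros in $D$, the argument principle gives winding number $0$ of $f^k(\gamma)$ about $0$. That is still consistent, so a different route is needed.

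Since the obstacle is really showing that the disk $D$ is not in the Fatou set, I would instead argue that $\mathbb C\setminus A$ contains Julia points inside $A_1$. If the whole disk $\{|z|\le\rho\}$ were Fatou, then (assuming $f$ has a finite fixed point or a periodic point, as any transcendental entire function has points of period $\le 2$ in its Julia set which are repelling) density of repelling periodic points, or simply the fact that $\mathcal J(f)$ is nonempty, unbounded and completely invariant, can be used as follows. Pick a point $z_0\in\mathcal J(f)$ with $|z_0|<r_1$. If no such point exists, enlarge the indexing by replacing $A_1$ with $A_N$, $N$ large, so that $r_N$ exceeds the modulus of some Julia point; this is possible since $\mathcal J(f)\ne\emptyset$. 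Then $U$, being connected and containing $A_N$ but not $z_0$, separates $z_0$ from $\infty$ only if $U$ is not simply connected around that circle. Concretely, the component of $\widehat{\mathbb C}\setminus U$ containing $z_0$ is bounded because it lies inside the circle $\gamma\subset U$. So $\widehat{\mathbb C}\setminus U$ has at least two components, namely that one and the one containing $\infty$, and $U$ is multiply connected.

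\textbf{Conclusion.} Corollary~\ref{BWD-iff-MCFC} then shows that $U$ is a BWD. The same argument applied to each $A_n$ gives the BWD containing $A_n$, which is $U_{n-1}$ since $f^{n-1}(A_1)\subseteq A_n$.
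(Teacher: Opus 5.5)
Your proposal is correct and follows essentially the paper's proof. Montel's theorem gives normality on the annuli, and a finite point of $\mathcal J(f)$ is surrounded by $A_n$ once $r_n$ is large, so the Fatou component containing $A_n$ is multiply connected; Corollary~\ref{BWD-iff-MCFC} then finishes, and like the paper you get this for all sufficiently large $n$. You should delete the abandoned maximum-modulus/argument-principle detour and the garbled parenthetical about periodic points, since the final argument uses only that $\mathcal J(f)\cap\mathbb C\neq\emptyset$ and that $\infty\notin U$.
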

	\begin{proof}
		By the Fundamental Normality Test (Theorem 3.3.4, \cite{Beardon-Book}), each annulus $A_n$ is contained in the Fatou set of $f$ since $\{f^n\}_{n>0}$ omits all the points surrounded by $A_1$. The Julia set of $f$ is non-empty  since there are repelling periodic points and those are always in the Julia set, (see Theorem 1, \cite{Berg 1993}). By the Picard's theorem and complete invariance of the Julia set, each neighborhood of $\infty$ contains a point of the Julia set. This gives that the Fatou component   $U_n$  containing $A_n$ is multiply connected for a sufficiently large $n$. Therefore, $U_n$ is a BWD by Corollary~\ref{BWD-iff-MCFC}.	\end{proof}
 
We now present the first example of a BWD that is constructed by Baker \cite{Baker 1963}. 	
\begin{example}\label{1st exm}
	 Let $C=\frac{1}{4e}$ and $r_1> 4e$. Also, let $r_2=2Cr^2_1$ and for each $n>2$,  \begin{equation}\label{Eq1}
		r_{n+1}=Cr_n^2\prod_{i=1}^{n}\left(1+\frac{r_n}{r_i}\right).
	\end{equation}	
	Then the function \begin{equation}\label{Baker-first}
		f(z)=Cz^2\prod_{i=1}^{\infty}\left(1+\frac{z}{r_i}\right) 
	\end{equation} has BWD.		
	\end{example}
	\begin{proof}

 From Equation (\ref{Eq1}), it is easy to see that \begin{equation}\label{Eq2}
\frac{r_{n+1}}{r_n}>Cr_n ~\mbox{for all }~ n.
\end{equation} 
Indeed, $r_n$ increases \textit{very fast}, to be made precise soon and this is the key to the desired property of the function.
\par Observe from Equation (\ref{Baker-first}) that for each $z$, we have
 \begin{equation}\label{Eq5}
|f(z)|\leq f(|z|) ~\mbox{and}~ |f(z)|\geq  f(-|z|).
\end{equation}
It follows from Equation (\ref{Eq1}) that $r_{n+1}\geq 2Cr^2_n>2Cr_1r_n>2r_n$ for all $n\geq 1$. In general, 
\begin{equation}
	 r_{n+j}>2^{j}r_n ~\mbox{for all}~ j, n\geq 1.
\label{growth-rn}\end{equation}
We get $\frac{1}{r_i}<\frac{1}{r_12^{i-1}}$ and consequently, $\sum_{i=1}^{\infty}\frac{1}{r_i}<\frac{1}{r_1}\sum_{i=1}^{\infty}\frac{1}{2^{i-1}}=\frac{2}{r_1}$. The infinite product in (\ref{Baker-first}) converges if and only if $\sum_{i=1}^{\infty}\left|\frac{z}{r_i}\right|$ converges on every compact subsets of $\mathbb{C}$ (see Chapter VII in \cite{Noop 1947}). This is actually the case since $\sum_{i=1}^{\infty}\frac{1}{r_i}$ converges. Therefore, the function $f$ is entire. Some useful properties of this function is now listed.
		\par Property 1. For $|z|\leq 1$, $|f(z)|<\frac{1}{4}|z|^2$ and in particular, $|f(z)|<\frac{1}{4}|z|$;
		\par Property 2. $r_{n+1}<f(r_n)<er_{n+1}$;
	    \par Property 3. $f({\sqrt{r_n}})<{\sqrt{r_{n+1}}}$;
		\par Property 4. $f({r^2_n})>4{r^2_{n+1}}$.

	\par The first is the contracting property of $f$ near the origin whereas the last three describe the growth of $f$ along the sequence $\{r_n\}_{n\geq 1}$ and other associated sequences. 
	\par We first show Property 1. For $|z| \leq 1$, $|f(z)| \leq C |z|^2 \prod_{i=1}^{\infty}\left(1+\frac{1}{r_i}\right)<C |z|^2 \prod_{i=1}^{\infty}\left(1+\frac{2^{1-i}}{r_1}\right)$. Now $\log\prod_{i=1}^{\infty}(1+2^{1-i}r_1^{-1})=\sum_{i=1}^{\infty}\log(1+2^{1-i}r_1^{-1})<\sum_{i=1}^{\infty}2^{1-i}r_1^{-1}=\frac{2}{r_1}$. Here, we use $\log(1+x)<x$ for $x>0$. Therefore, for all $|z|\leq 1$, $|f(z)|< Ce^{\frac{2}{r_1}}|z|^2 <  \frac{1}{4e}e^{\frac{1}{2e}}|z|^2=\frac{1}{4} e^{\frac{1}{2e}-1}|z|^2< \frac{1}{4}|z|^2$. For showing Property 2, observe that $r_{n+1}=Cr_n^2\prod_{i=1}^{n}\left(1+\frac{r_n}{r_i}\right)<f(r_n)=r_{n+1}\prod_{i=n+1}^{\infty}\left(1+\frac{r_n}{r_i}\right)<	r_{n+1}\prod_{i=n+1}^{\infty}\left(1+\frac{1}{2^{i-n}}\right)$, the last inequality following from Inequality~(\ref{growth-rn}). Since $\prod_{i=n+1}^{\infty}\left(1+\frac{1}{2^{i-n}}\right) = \prod_{i=1}^{\infty}\left(1+\frac{1}{2^i}\right)<e 
	$ (can be verified by taking logarithm),
	we have 	
\begin{equation}
	 r_{n+1} < f(r_n) <er_{n+1}.
	 \label{f(rn)-bound}
\end{equation}	
 This is Property 2.
 Baker mistakenly took  $\prod_{i=1}^{\infty}\left(1+\frac{1}{2^i}\right)=2$, which however does not affect the final conclusion. 
 \par Properties 3 and 4 arise  from Hadamard's Three Circle Theorem (see \cite{Hadamard1896}). It follows from the first inequality of Inequality~(\ref{Eq5}) that the maximum modulus $M(r,f)$ of $f$ on $\{z: |z|\leq r\}$ is attained at $z=r$ i.e., $M(r,f)=f(r)$. Further,   $f $ is an increasing function of $r$ by the Maximum Modulus Principle. By Hadamard's Three Circle Theorem, we have for $0<r_1<r<r_2$, \begin{equation}\label{HTC}
		\log f(r)<\alpha \log f(r_1)+(1-\alpha)\log f(r_2) ~\mbox{where}~ \alpha=\frac{\log\left(\frac{r_2}{r}\right)}{\log\left(\frac{r_2}{r_1}\right)}.
	\end{equation}
	For $s>0$, putting $r=e^s$, $r_1=1$ and $r_2=e^{2s}$ in Inequality (\ref{HTC}), we have $\alpha=\frac{1}{2}$ and,  $2\log f(e^s)<\log f(1)+\log f(e^{2s}).$  In other words, $f(r^2)>\frac{f(r)^2}{f(1)}$. Since $|f(z)|<\frac{1}{4}$ for all $|z|\leq 1$ (by Property 1), we have $\frac{1}{f(1)}>4$ and consequently 
	\begin{equation}\label{f(r)-inequality}
		f(r^2)>4f(r)^2~\mbox{for all} ~r>1.
	\end{equation} Putting $r=\sqrt{r_n}$, we get $ f(\sqrt{r_n})< \frac{1}{2}\sqrt{ f(r_n)}$. Using the second inequality of Property 2, we have $$ \frac{1}{2}{\sqrt{f(r_n)}}<\frac{\sqrt{e}}{2}\sqrt{r_{n+1}}<\sqrt{r_{n+1}}.$$ Therefore, $f({\sqrt{r_n}})<{\sqrt{r_{n+1}}}$, which is nothing but Property 3.
Similarly, putting $r=r_n$ in Inequality (\ref{f(r)-inequality}) and using the first inequality of Property 2, we get 
	$$f({r^2_n})>4{f({r_n})}^2>4{r^2_{n+1}}.$$ This is Property 4.
	
	\par Finally, let $A_n=\{z:r_n^2<|z|<\sqrt{r_{n+1}}\}$ for each $n \geq 1$. We first show that each $A_n$ is non-empty for all large $n$. For this, it will suffice to establish $ \lim_{n \to \infty}\frac{\sqrt{r_{n+1}}}{r_n^2} =\infty$, which is  equivalent to $\lim_{n \to \infty}\frac{r_{n+1}}{r_n^4} = \infty$. Observe that 
	\begin{equation}
		\frac{r_{n+1}}{r_n^4}=\frac{2C}{r^2_n}\prod_{i=1}^{n-1}\left(1+\frac{r_n}{r_i}\right) >\frac{2C}{r^2_n}\frac{r^{n-1}_n}{r_1r_2\ldots r_{n-1}} >\frac{2Cr^{n-3}_n}{  (r_{n-1})^{n-1}}.\\
\label{quotient-rn}\end{equation}
By Inequality (\ref{Eq2}), the last term is bigger than 
 $ 2C^3 (C r_{n-1})^{n-5},$  which clearly goes to $\infty$ as $n\to\infty$.	
 \par We now establish  $f(A_n) \subset A_{n+1}$, i.e., $r_{n+1}^2 < |f(z)| <\sqrt{r_{n+2}}$ for all large $n$ and for all $z \in A_n$.
It follows from the first part of Inequality (\ref{Eq5}) and Property 3 that, for $z\in A_n$, $$|f(z)|\leq f(|z|)<f({\sqrt{r_{n+1}}})<{\sqrt{r _{n+2}}} ~\mbox{for all sufficiently large}~ n.$$
We shall be done by showing that  for all $z \in A_n$,  $|f(z)| > r_{n+1}^2$ for all large $n$. For each $z \in A_n$, observe that $4r_n < |z| < \frac{r_{n+1}}{4}$.


 For $|z|=r$, consider $\log \left|\frac{f(r)}{f(-r)}\right|=\sum_{k=1}^{\infty} I_k ~\mbox{where}~ I_k= \log\left|\frac{1+\frac{r}{r_k}}{1-\frac{r}{r_k}}\right|.$ 
 Take $n$ sufficiently large so that $r_n>4$. Then  $\frac{r_k}{r} < \frac{r_k}{r_n ^2} < \frac{r_k}{4 r_n}$ which is less than $\frac{1}{4}$.

For $k \leq n-1$, we have $r>r_k$ and therefore $I_k =\log \left(\frac{1+\frac{r}{r_k}}{-1+\frac{r}{r_k}}\right)  $ which can be rewritten as $\log \left(\frac{1+\frac{r_k}{r}}{1-\frac{r_k}{r}}\right)$. This quantity is seen to be less than $\frac{3 r_k}{r}$ using the following fact.

	\begin{equation}\label{Eq8}
	\log\left(\frac{1+x}{1-x}\right)<3x ~\mbox{for}~ 0<x<\frac{1}{2}.
\end{equation} 
 Thus, $\sum_{k=1}^{n-1}I_k<3\sum_{k=1}^{n-1}\frac{r_k}{r}.$ Since $r> 4r_n$, $\sum_{k=1}^{n-1}I_k<3\sum_{k=1}^{n-1}\frac{r_k}{4r_n} < \frac{3 r_{n-1}}{4 r_n}\sum_{k=1}^{n-1}\frac{1}{2^{n-1-k}} $ (using Inequality~(\ref{growth-rn})). This is clearly less than $\frac{3 r_{n-1}}{2r_n}$.
 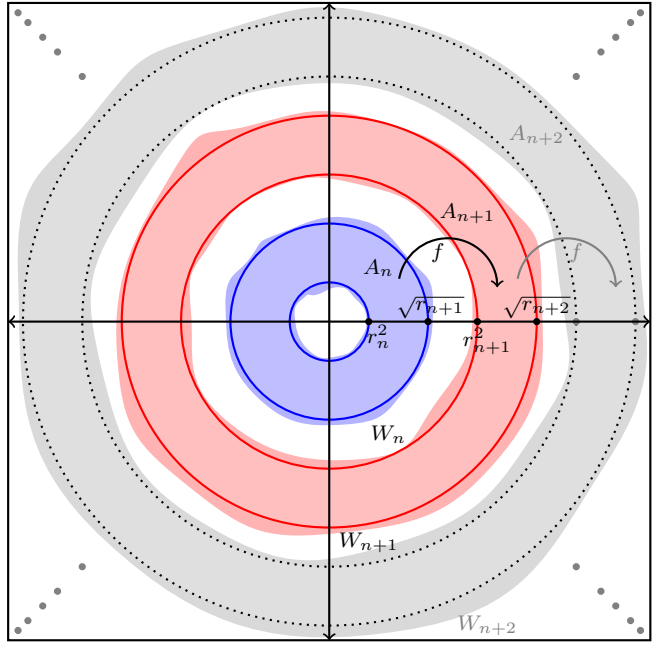
\begin{figure}
	\begin{center}
		
		\begin{tikzpicture}[scale=0.65]

			\fill[gray!30, even odd rule]
			plot[smooth cycle, tension=0.9] coordinates {
				(6.3,1.2) (5.3,3.6) (1.6,6.1)
				(-2.3,5.9) (-4.8,4.2)
				(-6.3,0.9) (-4.5,-4.5)
				(1.5,-6.3) (5.6,-2.9)
			}
			plot[smooth cycle, tension=0.9] coordinates {
				(4.7,0.5) (3.3,3.5) (-0.9,4.8)
				(-4.0,2.5) (-4.6,-1.0)
				(-2.4,-4.2) (1.4,-4.5) (4.2,-2.2)
			};
			
			\fill[red!30, even odd rule]
			plot[smooth cycle, tension=0.9] coordinates {
				(4.3,0.6) (3.4,2.6) (1.0,4.1)
				(-1.6,4.0) (-3.2,3.0)
				(-4.3,-0.6) (-2.8,-3.4)
				(0.3,-4.3) (3.5,-2.7)
			}
			plot[smooth cycle, tension=0.9] coordinates {
				(2.9,0.2) (2.2,1.9) (0.6,2.8)
				(-1.2,2.7) (-2.4,1.6)
				(-2.8,0.2) (-2.1,-2.0)
				(0.7,-2.8) (2.2,-1.6)
			};
			
			\fill[blue!30, even odd rule]
			plot[smooth cycle, tension=0.9] coordinates {
				(2.1,0.3) (1.6,1.3) (0.5,2.1)
				(-1.0,1.8) (-1.6,1.3)
				(-2.1,0.3) (-1.5,-1.5)
				(0.3,-2.1) (1.6,-1.3)
			}
			plot[smooth cycle, tension=0.9] coordinates {
				(0.75,0.1) (0.5,0.5) (0.2,0.7)
				(-0.25,0.55) (-0.5,0.5)
				(-0.7,0.1) (-0.4,-0.6)
				(0.2,-0.7) (0.6,-0.4)
			};
			
			
			\fill[gray!25, even odd rule]
			(0,0) circle (5)
			(0,0) circle (6.2);
			
			\draw[dotted, thick] (0,0) circle (5);
			\draw[dotted, thick] (0,0) circle (6.2);
			\fill[red!25, even odd rule]
			(0,0) circle (4.2)
			(0,0) circle (3.0);
			
			\draw[ red, thick] (0,0) circle (4.2);
			\draw[ red, thick] (0,0) circle (3.0);
			
			\fill[blue!25, even odd rule]
			(0,0) circle (0.8)
			(0,0) circle (2.0);
			
			\draw[ blue, thick] (0,0) circle (0.8);
			\draw[ blue, thick] (0,0) circle (2.0);
			\filldraw[color=black, fill=black, very thick](0.8,0) circle (0.04);
			\filldraw[color=black, fill=black, very thick](2.0,0) circle (0.04);
			\filldraw[color=black, fill=black, very thick](3.0,0) circle (0.04);
			\filldraw[color=black, fill=black, very thick](4.2,0) circle (0.04);
			
			\filldraw[color=gray, fill=gray, very thick](5.0,0) circle (0.04);
			\filldraw[color=gray, fill=gray, very thick](6.2,0) circle (0.04);
			\filldraw[color=gray, fill=gray, very thick](5.5,5.5) circle (0.04);
			\filldraw[color=gray, fill=gray, very thick](5.0,5.0) circle (0.04);
			\filldraw[color=gray, fill=gray, very thick](5.8,5.8) circle (0.04);
			\filldraw[color=gray, fill=gray, very thick](6.1,6.1) circle (0.04);
			\filldraw[color=gray, fill=gray, very thick](6.3,6.3) circle (0.04);
			\filldraw[color=gray, fill=gray, very thick](5.5,-5.5) circle (0.04);
			\filldraw[color=gray, fill=gray, very thick](5.0,-5.0) circle (0.04);
			\filldraw[color=gray, fill=gray, very thick](5.8,-5.8) circle (0.04);
			\filldraw[color=gray, fill=gray, very thick](6.1,-6.1) circle (0.04);
			\filldraw[color=gray, fill=gray, very thick](6.3,-6.3) circle (0.04);
			\filldraw[color=gray, fill=gray, very thick](-5.5,5.5) circle (0.04);
			\filldraw[color=gray, fill=gray, very thick](-5.0,5.0) circle (0.04);
			\filldraw[color=gray, fill=gray, very thick](-5.8,5.8) circle (0.04);
			\filldraw[color=gray, fill=gray, very thick](-6.1,6.1) circle (0.04);
			\filldraw[color=gray, fill=gray, very thick](-6.3,6.3) circle (0.04);
			\filldraw[color=gray, fill=gray, very thick](-5.5,-5.5) circle (0.04);
			\filldraw[color=gray, fill=gray, very thick](-5.0,-5.0) circle (0.04);
			\filldraw[color=gray, fill=gray, very thick](-5.8,-5.8) circle (0.04);
			\filldraw[color=gray, fill=gray, very thick](-6.1,-6.1) circle (0.04);
			\filldraw[color=gray, fill=gray, very thick](-6.3,-6.3) circle (0.04);
			\draw[thick, black, <-] (3.4,0.7) arc (0:170:1.0);
			\draw[thick, gray, <-] (5.8,0.7) arc (0:170:1.0);
			
			\node[black] at (2.2,1.4) {{\scriptsize $f$}};
			\node[gray] at (5.0,1.4) {\scriptsize $f$};
			\node[black] at (1.0,1.1) {\scriptsize $A_n$};
			\node[black] at (2.8,2.2) {\scriptsize $A_{n+1}$};
			\node[gray] at (4.2,3.8) {\scriptsize $A_{n+2}$};
			\node[black] at (1.0,-0.3) {\scriptsize $r_n^2$};
			\node[black] at (2.05,0.3) {\scriptsize $\sqrt{r_{n+1}}$};
			\node[black] at (3.2,-0.4) {\scriptsize ${r^2_{n+1}}$};
			\node[black] at (4.2,0.3) {\scriptsize $\sqrt{r_{n+2}}$};
			\node[black] at (1.2,-2.3) {\scriptsize $W_n$};
			\node[black] at (0.8,-4.5) {\scriptsize $W_{n+1}$};
			\node[gray] at (3.2,-6.2) {\scriptsize $W_{n+2}$};
			\draw[thick, <->] (-6.5,0) -- (6.5,0) node[right] {};
			\draw[thick, <->] (0,-6.5) -- (0,6.5) node[above] {};
			\draw[thick] (-6.5,-6.5) rectangle (6.5,6.5);
		\end{tikzpicture}
		\caption{{\label{bov}
				Iterated images of $A_n$ under $f$. }}
	\end{center}
\end{figure}
\par For $k\geq n+2, r< \frac{r_{n+1}}{4}$ and $r_k \geq r_{n+2}$, which gives that $\frac{r}{r_k} < \frac{r_{n+1}}{4 r_{n+2}}<\frac{1}{4}$. Using this and  Inequality~(\ref{Eq8}), we have
$ \log \left( \frac{1+\frac{r}{r_k}}{ 1-\frac{r}{r_k}}\right)  <   \frac{3r}{r_k} < \frac{3 r_{n+1}}{4r_{k}}$. Now
  $\sum_{k=n+2}^{\infty}I_k<\sum_{k=n+2}^{\infty} \frac{3r_{n+1}}{4r_k}=\frac{3r_{n+1}}{4r_{n+2}}\sum_{k=n+2}^{\infty} \frac{r_{n+2}}{r_k}$. It now follows from Inequality~(\ref{growth-rn}) that    $\sum_{k=n+2}^{\infty}I_k < \frac{3r_{n+1}}{4r_{n+2}} \sum_{k=n+2}^{\infty} \frac{1}{2^{k-n-2}}$. This is less than 
  	 $\frac{3r_{n+1}}{2r_{n+2}} $.
 
\par Putting all these estimates together, we have  
	\begin{align*}
		\log\left|\frac{f(r)}{f(-r)}\right|&< \frac{3r_{n-1}}{2r_n} +I_n +I_{n+1}+ \frac{3r_{n+1}}{2r_{n+2}}.
\end{align*}
Since $\frac{r_n}{r}, \frac{r}{r_{n+1}} < \frac{1}{4}$ and $\log \left(\frac{1+x}{1-x}\right)$ is increasing in $(0,1)$, we have	
$\log\left(\frac{{1+\frac{r}{r_{n+1}}}}{{1-\frac{r}{r_{n+1}}}}\right)+  	\log\left(\frac{{1+\frac{r_n}{r}}}{{1-\frac{r_n}{r}}}\right) <   2 \log\left(\frac{1+\frac{1}{4}}{{1-\frac{1}{4}}} \right)=2 \log \frac{5}{3}< \log 4$. 
	Since $\lim_{n\to\infty}\frac{r_{n}}{r_{n+1}} =0$, we have $\log \left| \frac{f(r)}{f(-r)}\right|<  \log4$ whenever $z \in A_n $ for all sufficiently large $n$.
Therefore, $$|f(z)|\geq f(-|z|)>\frac{1}{4}f(|z|)>\frac{1}{4}f({r_n^2})>{r^{2}_{n+1}} ~\mbox{for all sufficiently large}~ n.$$  The last inequality follows from Property 4.  
\par
It follows from Lemma~\ref{BWD-lem} that the annulus $A_n $ is contained in a BWD for all sufficiently large $n$. A schematic picture of $A_n$ is given in Figure~\ref{bov}.
\end{proof}
	For using later, we make a remark at this point.
	\begin{Remark}
	Recall from Inequality~(\ref{quotient-rn}) that $ \frac{r_{n+1}}{r_n ^4}> \frac{2C r_{n}^{n-3}}{(r_{n-1})^{n-1}}$. This gives that $ \frac{r_{n+1}}{r_n ^m }> \frac{2C r_{n}^{n-m+1}}{ (r_{n-1})^{n-1}}$. Using $r_{n}> Cr^2_{n-1}$, we have  $ \frac{r_{n+1}}{r_n ^m }>  2C^{n-m+2} r_{n-1}^{n-2m+3} = 2 C^{m-1}(Cr_{n-1})^{n-2m+3} $ and this goes to $\infty$ as $n \to \infty$. Therefore, for every fixed $m$, we have $\lim_{n\to\infty} \frac{r_{n+1}}{r^m_n} = \infty$. Let $M>1$ be arbitrary  and choose $m =[M]+1$ where $[.]$ denotes the greatest integer function. Then, for all sufficiently large $n$, $\frac{r_{n+1}}{r_n ^{m}}>M$, and this gives that $\frac{\log r_{n+1}}{\log r_n} > \frac{\log M}{\log r_n} +m > m >M$.  In other words, \begin{equation}
			\lim_{n\to\infty} \frac{\log r_{n+1}}{\log r_n} =\infty.
			\label{logrn-limit} \end{equation} 
	\end{Remark}
The presence of critical points in a BWD is crucially related to its connectivity and that is discussed in Section~\ref{singularvalues-section}.
	It was not Baker but later researchers who showed that the BWDs discussed in Example~\ref{1st exm} contains critical points. However, modifying his own example, Baker also established the existence of critical points in BWD and that is the next stuff for discussion. 
 
	\begin{example}(\cite{Baker 1985})
		There is an entire function with an infinitely connected BWD. \label{2nd exm}
	\end{example}
	\begin{proof}
	 
	Let $0<C<\frac{1}{4e^2}$ and $r_1>1$ be fixed. Then choose a positive integer $n_0$ such that $2^{n_0-1}C>2r_1$. For each $n\leq n_0$, choose $r_{n+1}>2r_n$ and for $n>n_0$, define 
	\begin{equation}\label{Eq9}
		r_{n+1}=C^2\prod_{i=1}^{n}\left(1+\frac{r_n}{r_i}\right)^2.
	\end{equation} For $n>n_0$, we are going to demonstrate that $r_n$ increases in a rate depending on the square of $r_{n-1}$. Keeping the first term intact, observing that all other terms in $r_{n_0 +2}$ are bigger than $2$, and then using the choice of $C$ we see that 
	\begin{align*}
		r_{n_0+2}=C^2\prod_{i=1}^{n_0+1}\left(1+\frac{r_{n_0+1}}{r_i}\right)^2 >C^22^{2n_0} \left(1+\frac{r_{n_0+1}}{r_1}\right)^2 
		&>C^22^{2n_0} \frac{r^2_{n_0+1}}{r^2_1} 
		 >{ 4}^2 r^2_{n_0+1}.\\
	\end{align*} 
	Inductively,  we have \begin{equation}r_{n_0+k+1}>{ 4 }^{k+1} r^2_{n_0+k}~\mbox{for every natural number }~k.\label{rn}\end{equation}
Consider  the function
	\begin{equation}\label{Eq10}
		g(z)=C^2\prod_{i=1}^{\infty}\left(1+\frac{z}{r_i}\right)^2.
	\end{equation} Since $r_{i+1}>2r_i$ for all $i$,  we have $\sum_{i=1}^{\infty}\frac{1}{r_i}<\frac{2}{r_1} $.  Therefore, the infinite product in (\ref{Eq10}) converges on every compact subset of $\mathbb{C}$ and $f$ is an entire function. 
	\par Since $\frac{1}{4^{n-n_0+1}}\to 0$ and $\frac{n+1}{n+2}\to 1$ as $n\to \infty$, putting $k =n-n_0$ in ~(\ref{rn}),  we have $$r^2_n<\frac{r_{n+1}}{4^{n-n_0+1}}<r_{n+1}\frac{n+1}{n+2}~\mbox{ for all sufficiently large  } n.$$
	
	Take $s_n=(\frac{n+1}{n+2})r_{n+1}$ and consider the annulus $A_n=\{z:r_n^2<|z|<s_n\}$. For all sufficiently large $n$, one can check that $g(s_{n})< s_{n+1}$, $g(-s_{n})>r^2_{n+1}$ and $g(-r^2_n)>r^2_{n+1}$ (see~\cite{Baker 1985}). For $|z|=s_n$, $|g(z)|\leq g(|z|)=g(s_n)<s_{n+1}$. Now by the Maximum Modulus Theorem, $|g(z)|<s_{n+1}$  for all $|z|<s_n$. Moreover, for all $z$,
	$$|g(z)|=C^2\prod_{i=1}^{\infty}\left|1+\frac{z}{r_i}\right|^2\geq C^2\prod_{i=1}^{\infty}\left(1+\frac{-|z|}{r_i}\right)^2=g(-|z|).$$ This gives for $|z|=r_n^2$ that $|g(z)|\geq g(-r_n^2)>r_{n+1}^2$. By the Minimum Modulus Theorem, $|g(z)|>r_{n+1}^2$ for all $z$ with $|z|>r_n^2$.
	Thus,  the function $g$ maps $A_n$ into  $A_{n+1}$ for all sufficiently large $n$. The annulus $A_n$ is contained in a BWD of $g$ by Lemma~\ref{BWD-lem}.
\par In order to locate the critical points of $g$, note that

	\begin{equation}\label{Eq11}
		\log g(z)=\log C^2+2\sum_{i=1}^{\infty}\log \left(1+\frac{z}{r_i}\right).
	\end{equation}
	We consider this expression for those $z$ for which  $\log g(z) $ is defined. Then
		$${\frac{1}{2}g'(z)}={g(z)}\sum_{i=1}^{\infty} {\frac{1}{z+r_i}}.$$
		Let $\phi(z)= \sum_{i=1}^{\infty} {\frac{1}{z+r_i}}$.
The zeros of $g'$ are precisely the zeros of $g$ and that of $\phi$. 
Fix a sufficiently large $n$ and note that $\phi$ is strictly decreasing on
	$I_n=(-r_{n+1}, -r_n)$ and $-r_{n+1} < -s_n <-r_n ^2 < -r_n $.
%
We assert that $\phi(-s_{n})>0$ and $\phi(-r^2_n)<0$. Let   $ \frac{1}{r_{i }-s_{n}}$ be denoted by $\alpha_{i}$. Since for sufficiently large $n$, $r_{n+1}>2^{n+1}r_n>(n+2)r_n$ we have $(n+1)r_{n+1} -(n+2)r_n > n r_{n+1}$ and consequently,
	
\begin{equation} \alpha_{n}=\frac{-(n+2)}{ (n+1)r_{n+1}-(n+2)r_n}> \frac{-(n+2)}{nr_{n+1}}.
\label{ alpha-n}\end{equation}
 Since all the terms after the $(n+1)$-th term in the series   expression of $\phi (-s_n)$ are positive, $\phi(-s_n)>\left( \sum_{i<{n+1}}\frac{1}{r_{i}-s_{n}}\right)+\frac{n+2}{r_{n+1}}$. For $i<n$ we have 	$ \alpha_{i } >  \alpha_{n}$ which gives that $\phi(-s_n)> n \alpha_n + \frac{n+2}{r_{n+1}}.$ Now, it follows from Inequality~(\ref{ alpha-n}) that $\phi(-s_n)>0$.
	 
Since $r_{n+1}>4r^2_n$ for all sufficiently large $n$ (by Inequality~(\ref{rn})), we have  $r_{n+1}-r^2_n>3r^2_n$. Repeated application of this inequality gives that $r_{n+2}-r^2_n>7r^2_n$, $r_{n+3}-r^2_n>15r^2_n$ and so on. Since $\sum_{n=2}^{\infty}\frac{1}{2^n-1}<\sum_{n=2}^{\infty}\frac{1}{2^n}=1$, we have 
	\begin{align*}
		\phi(-r^2_n)&<\frac{1}{r_n-r^2_n}+\left(\frac{1}{r_{n+1}-r^2_n}+\frac{1}{r_{n+2}-r^2_n}+\cdots\right)\\
		&<-\frac{1}{r^2_n}+\left(\frac{1}{3r^2_n}+\frac{1}{7r^2_n}+\frac{1}{15r^2_n}+\cdots\right)\\
		&=-\frac{1}{r^2_n}+\frac{1}{r^2_n}\sum_{n=2}^{\infty}\frac{1}{2^n-1}<0. 
	\end{align*} 

Thus, the function $g$ has a critical point in $A_n$ and therefore the BWD containing the annulus $A_n$ contains a critical point, namely a root of $\phi$. The proof of the fact  that these BWDs are infinitely connected is postponed to Corollary~\ref{SecondExample-infinitelyconnected}. 
\end{proof}
 The roots of $g$ are all negative and are critical points of $g$. Since $g(0)=C^2$, all the roots of $g$ have the same forward orbit. 
Further, all but possibly finitely many roots of $\phi$ (these are critical points of $g$) are in the BWDs discussed in the above example. By studying the forward orbit of these finitely many critical points as well as of $0$, one may understand  the Fatou set of $g$ completely. Below and now onward, by the grand orbit of a wandering domain $W$ of $f$, we mean the set of all Fatou components $W$ such that $W_m =W_n$ for some non-negative integers $m,n$.
 A question  arises.
	\begin{Question}
	Is there a Fatou component of $g$ that is not in the grand orbit of the BWDs constructed in Example~\ref{2nd exm}? If it is so then what is the nature of all such Fatou components? 
	\end{Question}
	It is worth mentioning in the context of the above question that there is an entire function for which a simply connected bounded wandering domain can actually exist along with a BWD (see Theorem 1, \cite{Berg 2010-simply-multiply-wd}).
		
	Recall that a maximally connected subset of the Julia set is called a Julia component.
	 A Julia component is called buried if it does not intersect the boundary of any Fatou component.
	\begin{Remark}
	 Baker and Dom\'inguez  constructed an entire function similar to Example~\ref{2nd exm} with BWD with the additional property that every repelling periodic point of the function is a buried and singleton Julia component (Theorem G, \cite{Baker Dominguez 2000}).  
	\end{Remark}
	The order of a transcendental function $f$ is a quantification of the rate of growth of its maximum modulus, which  is defined by 
	$$\label{For1}
	\limsup_{r\to\infty}\frac{\log(\log (M(r,f)))}{\log r} 
	$$
	where $M(r,f)=\max_{|z|=r}|f(z)|$  (see \cite{asymptoticvalue-book}). 
	It was proved in Theorem $5.2$ in \cite{Baker 1984} that for every $\rho$ satisfying $1\leq\rho\leq\infty$, there is an entire function of order $\rho$ having a wandering domain. The wandering domains constructed in this proof are simply connected. Later in 1985, Baker proved the following result on the existence of a BWD of an entire function of any prescribed order.
	
	\begin{theorem}(Theorem 1, \cite{Baker 1985})
		For each $\rho \in [0, \infty)$, there is an entire function  of order $\rho$, which has a BWD.
		\label{exampleBWD-order}
	\end{theorem}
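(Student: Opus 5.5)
The plan is to adapt Baker's construction of Example~\ref{1st exm} by replacing the single factor $\prod(1+z/r_i)$ with a product whose zeros are repeated with multiplicities $m_i$ chosen to tune the growth rate. Concretely, I would consider
$$f(z)=Cz^{2}\prod_{i=1}^{\infty}\left(1+\frac{z}{r_i}\right)^{m_i},$$
with $r_i>0$ growing extremely fast and positive integers $m_i$. The order of a canonical product with positive zeros is governed by the counting function $n(r)=\sum_{r_i\le r}m_i$. For a target $\rho\in(0,\infty)$, I would choose the $m_i$ so that $n(r_i)\approx r_i^{\rho}$. For $\rho=0$, I would take $m_i=1$ and $r_i$ as in Example~\ref{1st exm}, which already gives order $0$. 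Since $\log M(r,f)$ is comparable to $\int_0^r n(t)\,dt/t$ plus a term of the form $r\int_r^\infty n(t)\,dt/t^2$, these choices give order $\rho$. Using a factor $(1+z/r_i)$ with real positive $r_i$ keeps the key symmetry $|f(z)|\le f(|z|)$ and $|f(z)|\ge f(-|z|)$ (Inequality~(\ref{Eq5})). For $\rho\ge 1$, I would switch to genus-$p$ primary factors $E(-z/r_i,p)$ only if convergence requires it. An alternative is to note that $m_i$ large at sparse $r_i$ still allows convergence, since convergence requires $\sum m_i/r_i<\infty$, which restricts us to $\rho<1$ unless we use Weierstrass factors. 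I would first treat $\rho<1$ cleanly and then handle $\rho\ge1$ with the factor $E$.

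The dynamical part then follows the template of Example~\ref{1st exm}. I would define radii $r_{n+1}$ implicitly by $r_{n+1}\approx f(r_n)$, as in Property~2. I would then prove the analogues of Properties~1--4, namely that $f$ is contracting near $0$, that $f(\sqrt{r_n})<\sqrt{r_{n+1}}$, and that $f(r_n^2)>4r_{n+1}^2$, using Hadamard's three circles inequality $f(r^2)>f(r)^2/f(1)$. Next, for $r$ in $A_n=\{r_n^2<|z|<\sqrt{r_{n+1}}\}$, I would show that $\log(f(r)/f(-r))$ is bounded by $\log 4$. The argument splits the sum into $k\le n-1$, $k=n,n+1$ and $k\ge n+2$, exactly as before, now weighted by $m_k$. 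This yields $f(A_n)\subset A_{n+1}$, and Lemma~\ref{BWD-lem} gives the BWD.

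The main obstacle is the tension between the two requirements. The dynamics needs a huge gap, $\log r_{n+1}/\log r_n\to\infty$ as in (\ref{logrn-limit}), so that $A_n$ is nonempty and the middle terms $I_n,I_{n+1}$ are controlled. The order, on the other hand, needs $\log\log M(r)/\log r\to\rho$ along all $r$, not just a lower bound. With sparse zeros, $\log M(r)$ between $r_n$ and $r_{n+1}$ behaves like $n(r_n)\log r$, which is polynomial in $r$. So the order is $\limsup \log n(r_n)/\log r_n$ and can be adjusted by the multiplicities: taking $m_n\approx r_n^{\rho}$ gives order $\rho$. At the same time, $r_{n+1}\approx f(r_n)\approx r_n^{\,m_1+\dots+m_n}$ keeps the super-fast gap automatically. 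The delicate step is the minimum-modulus estimate $\log(f(r)/f(-r))<\log 4$. With multiplicities, the tail sum for $k\le n-1$ becomes $3\sum_{k<n} m_k r_k/r$, and I must check that it is still small on $A_n$. I expect this to hold because $r>r_n^2$ dominates $m_kr_k\le r_n^{\rho+1}$ when $\rho<1$. For large $\rho$, I would shrink the inner radius of $A_n$ to $r_n^{N}$ with $N>\rho+1$ and verify the analogue of Property~4 at $r_n^N$, which three circles with exponent $N$ still gives.
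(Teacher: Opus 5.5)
\textbf{The proposal has a genuine gap: it proves the theorem only for $0\le\rho<1$.} The step you defer to Weierstrass factors for $\rho\ge 1$ fails. With $m_i\approx r_i^{\rho}$ zeros stacked at $-r_i$, the genus-zero product needs $\sum m_i/r_i<\infty$, and such a product has order at most $1$ anyway, so you are forced to use $E(-z/r_i,p)$ with $p\ge 1$.

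The factors with $i\le n$ then contribute $\exp P_n(z)$, where $P_n(z)=\sum_{i\le n}m_i\sum_{k=1}^{p}(-z/r_i)^k/k$. Its linear coefficient has modulus $\sum_{i\le n}m_i/r_i\ge m_n/r_n\gtrsim r_n^{\rho-1}\ge 1$.

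\emph{The case $p=1$.} Take $z=r>0$. Every factor satisfies $\log|E(-r/r_i,1)|=\log(1+r/r_i)-r/r_i<0$. The $i=n$ factor alone gives $\log|f(r)|\le\log C+2\log r-r/3$ for $r\ge r_n^2$.

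\emph{Larger $p$.} The same happens in some direction. The function $\mathrm{Re}\,P_n$ is harmonic with mean zero on $|z|=r$, and by Parseval it dips below $-c_p\,r\,m_n/r_n$ there. This swamps $\sum_{i\le n}m_i\log(1+r/r_i)\lesssim r_n^{\rho}\log r$ once $r\ge r_n^2$.

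Consequently $|f|$ is astronomically small on part of every circle in $A_n$. The comparisons $|f(z)|\le f(|z|)$, $|f(z)|\ge f(-|z|)$ and $M(r,f)=f(r)$ all fail, as does the bound $\log(f(r)/|f(z)|)<\log 4$. So $f(A_n)$ comes close to $0$ and $f(A_n)\subset A_{n+1}$ is impossible. Enlarging the inner radius to $r_n^{N}$ does not touch this obstruction.

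\textbf{The missing idea, which is what the paper does, is to spread the $k_j=\lfloor r_j^{\rho}\rfloor$ zeros evenly around $|z|=r_j$ instead of stacking them.} The paper takes $g(z)=C'\prod_j\{1+(z/r_j)^{k_j}\}$ with $r_{n+1}=C'\prod_{j\le n}\{1+(r_n/r_j)^{k_j}\}$. Since $1+(z/r_j)^{k_j}=\prod_{\omega^{k_j}=-1}\bigl(1-z/(\omega r_j)\bigr)$ and $\sum_\omega\omega^{-k}=0$ for $1\le k<k_j$, the would-be exponential factors cancel by symmetry. This gives three things:
\begin{itemize}
\item the product converges for every $\rho$, because $\sum_j(|z|/r_j)^{k_j}<\infty$ once $k_j\to\infty$;
\item $|g(z)|\le g(|z|)$ survives;
\item on $|z|=r$ each factor's ratio $\bigl(1+(r/r_j)^{k_j}\bigr)/\bigl|1+(z/r_j)^{k_j}\bigr|$ is at most $\frac{1+x}{1-x}$, with $x=(r/r_j)^{k_j}$ or $(r_j/r)^{k_j}$ and no multiplicity weight.
\end{itemize}
That is why the paper gets $|g(z)|>\frac14 g(|z|)$ on the original annulus $r_n^2<|z|<\sqrt{r_{n+1}}$ for all $\rho$ at once.

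\textbf{There is also a smaller slip in your $\rho<1$ case.} With multiplicities, $I_{n+1}\approx 2m_{n+1}r/r_{n+1}$ is about $2r_{n+1}^{\rho-1/2}$ near $|z|=\sqrt{r_{n+1}}$. This is not small once $\rho\ge\frac12$, so the gap alone does not control the middle terms. You must also shrink the outer radius to $r_{n+1}^{\theta}$ with $\theta<1-\rho$. Then $f(r_{n+1}^{\theta})<r_{n+2}^{\theta}$ follows from convexity of $\log f$ in $\log r$ with $f(1)$ small. With that adjustment your route does work for $0\le\rho<1$.
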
 
	\begin{proof}
	Let $0 \leq \rho < \infty$. Also, let $C'$ be a constant with $0<C'<\frac{1}{4e^2}$, $r_1>1$ and  $k_1 \geq 1$ be a natural number.  Take a natural number $n_0$ such that $2^{n_0-1}C'>2r^{k_1}_1$. Consider the sequence $\{r_n\}_{n\geq 1}$ such that $r_{n+1} >  2r_n$ for $1\leq n <n_0$ and for $ n \geq n_0$, consider 
\begin{equation}
	 r_{n+1}=C'\prod_{j=1}^{n}\left\{1+\left(\frac{r_n}{r_j}\right)^{k_j}\right\},~\mbox{ where}~ k_n~\mbox{ is the greatest integer not exceeding }~r_n ^\rho.
	 \label{rn-oredr-example} 
\end{equation}In order to define $r_{n+1}$, one requires $r_1, r_2,\cdots r_n, k_1, k_2, \cdots k_{n-1}$ and $k_n$,  all of which are  already defined in the previous step. By induction, one can easily see that for $n\geq n_0$, $r_{n+1}>2r_n$. This gives that  \begin{equation}
		r_{n+k}>2^{k} r_n ~\mbox{for each }~n~\mbox{and}~ k.
		\label{r-n-growth}
	\end{equation} 
	
	  The choice of $k_n$s is made in such a way that the order of the following function becomes $\rho$ while still exhibiting BWD.
	\par Consider the function 
$$g(z)=C'\prod_{j=1}^{\infty}\left\{1+\left(\frac{z}{r_j}\right)^{k_j}\right\}.$$
For $\rho =0$, each $k_j$ is  $1$. Then $g(z)=\frac{C' f(z)}{C z^2}$ where $f$ is the function given by ~(\ref{Baker-first}). The function $g$ is clearly entire.
Let $0<\rho <\infty$. Then it follows from Inequality~(\ref{r-n-growth}) that $k_j >j$ for all large $j$. For each compact subset $K$ of the complex plane, there exists $M>0$ such that   $\sum_{j=1}^{\infty}\left|\frac{z}{r_j}\right|^{k_j}\leq \sum_{j=1}^{\infty}\left(\frac{M}{r_j}\right)^{k_j}<\infty$ for all $z \in K$. There exists $N$ such that $\frac{M}{r_j}<\frac{1}{2}$ for all $j\geq N$ and consequently, $\sum_{j=1}^{\infty}\left|\frac{z}{r_j}\right|^{k_j}= M_1 +\sum_{j=N}^{\infty}\left(\frac{1}{2}\right)^{k_j}$ for some $M_1>0$. Clearly, this series converges as $k_j >j$ for all large $j$. Thus $g$ represents an entire function.
	\par Recall that, in Example~\ref{1st exm}, the Inequality~(\ref{f(r)-inequality})  is a consequence of  Hadamard's Three Circles Theorem (true for all entire functions) and the three properties of $f$ namely, Inequality~(\ref{f(rn)-bound}) i.e., $r_{n+1} < f(r_n) <er_{n+1}$ where $r_n$ is the modulus of a root of $f$, as defined in Example~\ref{1st exm}, $M(r,f)=f(r)$ and $|f(z)| < \frac{1}{4}$ for all $|f(z)|\leq 1$. We are going to show that $g$ satisfies all these conditions. Taking $r_n$ as defined in Equation~(\ref{rn-oredr-example}), we observe that $$r_{n+1} =C' \prod_{j=1}^{n} \left\{1+\left(\frac{r_n}{r_j}\right)^{k_j}\right\} < g(r_n)=r_{n+1} \prod_{j=n+1}^{\infty} \left\{1+\left(\frac{r_n}{r_j}\right)^{k_j}\right\}.$$
	It follows from Inequality(\ref{r-n-growth}) and $k_j >1$ that $(\frac{r_n}{r_j})^{k_j} <\frac{r_n}{r_j} <  \frac{1}{2^{(j-n)}}$ for all $j \geq n+1$. The right most term in the above inequality now becomes less than $r_{n+1} \prod_{j=1}^{\infty} \left\{1+ \frac{1}{2^j} \right\} $, which is nothing but $e r_{n+1}$. Therefore, \begin{equation}
		r_{n+1}< g(r_n)< e r_{n+1}.
		\label{g-rn}
	\end{equation}
	$|g(z)| \leq g(|z|)$ for all $z$, we have $M(r,g)=g(r)$. Now $g(1)=C' \prod_{j=1}^{\infty}\left\{1+\left(\frac{1}{r_j}\right)^{k_j}\right\}  \leq C'\prod_{j=1}^{\infty} \left\{1+ \frac{1}{r_j}  \right\}=\frac{C'}{C} f(1)$, which is less that $\frac{1}{4}$ by the choice of $C,C'$ and the fact that $f(1) < \frac{1}{4}$. Therefore $$g(r^2) > 4{g(r)}^2.$$  Putting $r=\sqrt{r_{n}}$ and $r=r_n$  and using Inequality~(\ref{g-rn}), we get that for $n>n_0$, $g(\sqrt{r_n})<\sqrt{r_ {n+1} }$ and $g(r_n^2)>4 r_{n+1}^2$, respectively. 
	\par Consider the annulus $A_n=\{z:r_n^{2}\leq|z|\leq \sqrt{r_{n+1}}\}$. For $z\in A_n$, the Maximum Modulus Theorem and $g(\sqrt{r_{n}})<\sqrt{r_{n+1}}$ give that \begin{equation}
		 |g(z)|<g(\sqrt{r_{n+1}})<\sqrt{r_{n+2}}.
		 \label{g-Annulus}
	\end{equation} 
	
	To obtain a lower bound for $|g(z)|$ on $A_n$, we put  $|z|=r$ and observe that $r<r_j$ if and only if $j >n$. This gives that $$\frac{g(r)}{|g(z)|}
	\leq \prod_{j>n}\left(\frac{{1+\frac{r}{r_j}}}{{1-\frac{r}{r_j}}}\right) \prod_{j\leq n}\left(\frac{{1+\frac{r}{r_j}}}{{ \frac{r}{r_j} -1}}\right) = \prod_{j>n}\left(\frac{{1+\frac{r}{r_j}}}{{1-\frac{r}{r_j}}}\right) \prod_{j\leq n}\left(\frac{{1+\frac{r_j}{r}}}{{1-\frac{r_j}{r}}}\right).$$ 
For all sufficiently large $n$, 
	\begin{equation}		 
\frac{r}{r_j}\leq \frac{r}{r_{n+1}}<\frac{1}{\sqrt{r_{n+1}}} < \frac{1}{2} ~\mbox{for}~ j>n ~\mbox{and}~\frac{r_j}{r}\leq \frac{r_n}{r_n ^2}=\frac{1}{ r_{n}}< \frac{1}{2} ~\mbox{for}~ j\leq n.
\label{Ineq-order-example}\end{equation}
	Using Inequality~(\ref{Eq8}), we have
	 $$\log\left(\frac{{1+\frac{r}{r_j}}}{{1-\frac{r}{r_j}}}\right)<\frac{3r}{r_j} ~~\mbox{for}~ j>n~\mbox{and}~ \log\left(\frac{{1+\frac{r_j}{r}}}{{1-\frac{r_j}{r}}}\right)<\frac{3r_j}{r} ~\mbox{for}~ j\leq n.$$ Therefore,  $$\log \frac{g(r)}{|g(z)|}\leq 3\sum_{j>n}\frac{r}{r_j}+3\sum_{j\leq n}\frac{r_j}{r}<\frac{3r}{r_{n+1}}\sum_{j \geq n+1} 2^{n+1-j}+ \frac{3r_n}{r} \sum_{j \leq n } 2^{-n+j}< 6\left(\frac{r}{r_{n+1}}+\frac{r_n}{r}\right).$$
It follows from Inequality~(\ref{Ineq-order-example}) that the right hand side term is less than 
$\frac{1}{\sqrt{r_{n+1}}}+ \frac{1}{r_n}$, which is less than $\log 4$
for all sufficiently large $n$.	  In other words,  $|g(z)|>\frac{1}{4}g(|z|)$ for all $z \in A_n$ and for all sufficiently large  $ n$. Using this and the Minimum Modulus Theorem we get $|g(z)|> \frac{1}{4}g(r_n ^2)$ for all $z \in A_n$. Since  it is already observed that $g(r_n^2)>4 r_{n+1}^2  >r^2_{n+1}$, using Inequality~(\ref{g-Annulus}), we have $g(A_n)\subset A_{n+1}$. By Lemma~(\ref{BWD-lem}),   the annulus $A_n$ is contained in a BWD of $g$. Note that all these calculations are valid for $k_j =1$ (when $\rho =0$) as well as for $k_j >1$(when $0< \rho <\infty$).
	
	\par To determine the order of $g$, we take the logarithm of the maximum modulus of $g$ on $|z|=r$  where $r_n\leq r<r_{n+1}$, i.e.,  $$\log g(r)=\log C+\sum_{j\leq n}\left[\log\left\{ 1+ \left(\frac{r}{r_j}\right)^{k_j}\right\}\right]+\sum_{j> n}\left[\log\left\{ 1+ \left(\frac{r}{r_j}\right)^{k_j}\right\}\right].$$
Rewriting the second term, we have
	\begin{equation}\label{Eq12}
		\log g(r)=\log C+\sum_{j\leq n}k_j\log\left(\frac{r}{r_j}\right)+\sum_{j\leq n}\log\left\{ 1+ \left(\frac{r_j}{r}\right)^{k_j}\right\}+\sum_{j>n}\log\left\{ 1+ \left(\frac{r}{r_j}\right)^{k_j}\right\}.
	\end{equation} 
	For $j\leq n$, $r_j \leq r_n  \leq r$ and therefore $0<\left(\frac{r_j}{r}\right)^{k_j}<1$. This gives (using $\log (1+x) <x$ for all $x \in (0,1)$) that $$\sum_{j\leq n}\log\left\{ 1+ \left(\frac{r_j}{r}\right)^{k_j}\right\}\leq \sum_{j\leq n}\left(\frac{r_j}{r}\right)^{k_j}\leq \sum_{j \leq n } \left(\frac{r_j}{r_n}\right)^{k_j} \leq \sum_{j \leq n } \frac{1}{2^{n-j}} < 1+ \sum_{j =1}^{\infty} \frac{1}{2^{ j}}=2.$$ 
	
	Similarly, for $j> n$, $r_j> r$, we have $0<\left(\frac{r}{r_j}\right)^{k_j}<1$ and, $$\sum_{j> n}\log\left\{ 1+ \left(\frac{r}{r_j}\right)^{k_j}\right\}\leq \sum_{j> n}\left(\frac{r}{r_j}\right)^{k_j} < \sum_{j> n }\left(\frac{r_{n+1}}{r_j}\right)^{k_j}< \sum_{j>n} 2^{n+1-j}=2 .$$
	Therefore, the sum of the third and the fourth term  in the right hand side of the Equation (\ref{Eq12}) is less than $4$ which is independent of $r$. Now, Equation (\ref{Eq12}) can be written as,
\begin{equation}
\log g(r)=\sum_{j\leq n}\left\{k_j\log\left(\frac{r}{r_j}\right)\right\}+\mathcal{O}(1).
\label{log-g(r)}
	\end{equation}	
	where $\mathcal{O}(1)$ is a function of $r$ that is bounded (as $r\to\infty$). We now estimate the sum using a suitable integral.
\par The circle $\{z: |z|=r_j\}$ contains exactly $k_j$ many zeros of $g$ counting multiplicity.	
Let $n(t)$ denote the number of zeros of $g$ in $\{z:|z|\leq t\}$. Then $n(t)=\sum_{r_j \leq t} k_j$ and is a non-decreasings step function. Since $n(t)=0$ for all $t\in (0, r_1)$, the Riemann-Stieltjes integral
	$\int_{0}^{r}\log\left(\frac{r}{t}\right)d(n(t)) $ is equal to $\int_{\delta}^{r}  \log\left(\frac{r}{t}\right)d(n(t))$ for some $0< \delta < r_1$. Observe that the value of this integral is $\sum_{j\leq n} k_j\log\left( \frac{r}{r_j}\right)$. On the other hand, integrating by parts, the value of this integral is found to be $\int_{\delta}^{r} \frac{n(t)}{t} dt$. Thus \begin{equation}
		\sum_{j\leq n} k_j\log\left( \frac{r}{r_j}\right) = \int_{\delta}^{r} \frac{n(t)}{t} dt.
		\label{sum-integral}
	\end{equation}
\par For every $t \in (\delta, r)$, there is a $j$ such that $r_j \leq t < r_{j+1}$. Then $n(t)=k_1+k_2+\cdots+k_j  \leq r_1 ^{\rho} + r_2 ^{\rho}+\cdots  +r_j ^{\rho}$ by our choice of $k_i$ (see Equation~\ref{rn-oredr-example}). Since $r_i < \frac{r_j}{2^{j-i}}$ for each $i<j <n$ (see Inequality~(\ref{r-n-growth})), $n(t) \leq r_j ^\rho \sum_{i=1}^{j} 2^{(i-j)\rho}$ which is clearly less than $r_j ^\rho \frac{2^\rho}{2^{\rho}-1}$. Thus (using $r_j \leq t$),  
 we have
	
	 $$\int_{0}^{r}\frac{n(t)}{t}dt=\int_{\delta}^{r}\frac{n(t)}{t}dt \leq\frac{2^\rho}{2^{\rho}-1}  \int_{\delta}^{r}t^{\rho -1}dt = \frac{2^\rho}{\rho(2^{\rho}-1)} (r^\rho -\delta^\rho).$$

	 Finally, using Equations~(\ref{log-g(r)}) and (\ref{sum-integral}) we get $\log g(r)=\mathcal{O}(r^\rho)$ as $r\to\infty$.  This shows that $g$ is an entire function of order $\rho$.	 The case $\rho=0$ is almost obvious since the last inequality tends to $\log r - \log \delta$ as $\rho \to 0$.\end{proof} 
	\begin{Remark}
	An entire function of order $\infty$ can also be constructed such that it has a BWD. 
		\end{Remark}
		
	Motivated by the work of Baker, in 2013 Bergweiler et al. constructed some  examples of entire functions having Baker wandering domains that differ significantly from the earlier ones  \cite{BergRippStall 2013}. In 2008, Kisaka and Shishikura, using Quasiconformal-surgery, also provided some examples of entire functions possessing Baker wandering domains \cite{KisakaShishikura 2008}.
	\par Two entire functions $f$ and $g$ are called permutable if they permute, i.e., $f\circ g=g\circ f$. In 2015, Benini et al. investigated the problem of equality of Julia sets for two permutable functions by analyzing the limits of the iterates  on their BWDs \cite{Benini rippon Stallard 2015}. We put together two results reported in that paper.
	
	\begin{theorem} (Proposition 3.3 and Corollary 3.6)
		Let $f$ and $g$ are two permutable entire functions and $U$ be a BWD of $f$. Then $g(U)$ is contained in a BWD of $f$. Further,  $f^n(U)$ is a BWD of   $g$ for all large $n$.
		\label{Benini-BWD}	\end{theorem}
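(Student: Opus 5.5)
The plan is to push the escaping behaviour of $f$ on $U$ through $g$ using $f^n\circ g=g\circ f^n$, and to control $g$ on the far-out domains $U_n$ through their geometry. The geometric input is the standard fact about a BWD of an entire function (Zheng; Bergweiler--Rippon--Stallard): for all large $n$, $U_n$ contains a round annulus $A(a_n,b_n)$ with $b_n/a_n\to\infty$. Moreover, for each compact $K\subset U$, $f^n(K)$ lies in such an annulus, close to a circle $\{|z|=\rho_n\}$. I would establish this by combining the hyperbolic contraction of $f^n:U\to U_n$ with the fact that the hyperbolic metric of a very thick annulus is comparable to $|dz|/(|z|\log(b_n/a_n))$. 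On circles deep inside a thick annulus far from $0$, a transcendental entire $g$ has minimum modulus much larger than $|z|$: one would compare with $M(\rho,g)$ through a Harnack or three-circles type estimate on $\log|g|$, which is harmonic on a zero-free subannulus. Securing this lower bound for $|g|$ on $f^n(K)$ is the step I expect to be the main obstacle, and it is the heart of both parts.

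For the first part, the lower bound gives $g(f^n(z))\to\infty$ locally uniformly on $U$, that is, $f^n\circ g\to\infty$ locally uniformly on $U$. For $w=g(z_0)$ with $z_0\in U$, choose a small disc $D\ni z_0$ with $\overline D\subset U$. By the open mapping theorem $g(D)$ is a neighbourhood of $w$, and $f^n\to\infty$ uniformly on it. Hence $\{f^n\}$ is normal there, and $g(U)\subset\mathcal F(f)$. Let $W$ be the Fatou component of $f$ containing $g(U)$. For large $n$, $W_n\supset g(U_n)$ contains $g(\gamma_n)$, where $\gamma_n$ is a circle in the thick annulus of $U_n$. By the argument principle, $g(\gamma_n)$ has nonzero winding number about $0$, because $g$ has zeros inside $\gamma_n$ (take $n$ large, using that $g$ is transcendental). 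Its modulus is also huge, so $g(\gamma_n)$ separates $0$, which lies in $\mathcal J(f)$ or near Julia points, from $\infty$. Thus $W_n$ is multiply connected, and Corollary~\ref{BWD-iff-MCFC} makes $W_n$, and hence $W$, a BWD of $f$. Here $W$ is wandering and escaping because $W_n$ is, and it is bounded by Theorem~\ref{Thm1}.

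For the second part, the same estimate gives $|g(z)|>|z|^2$ on $U_n$ for all large $n$. Iterating, and using that by the first part $g$ maps each $U_n$ into a BWD of $f$ to which the estimate again applies, gives $|g^m|\to\infty$ uniformly on $U_n$. The family $\{g^m\}$ therefore omits a disc on $U_n$, so $U_n\subset\mathcal F(g)$ by the Fundamental Normality Test. The Fatou component $V$ of $g$ containing $U_n$ contains $\gamma_n$, which surrounds points of $\mathcal J(g)$: $\mathcal J(g)$ is unbounded and contains preimages of repelling points arbitrarily close to $0$ by Picard. So $V$ is multiply connected and hence a BWD of $g$. For equality $V=U_n$, apply the same reasoning with $f$ and $g$ interchanged: $V$ lies in a Fatou component of $f$ that meets $U_n$, so $V\subset U_n$. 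Hence $f^n(U)$, which by Theorem~\ref{Thm1} eventually coincides with the whole component $U_n$ up to the usual boundary issues, is a BWD of $g$ for all large $n$.
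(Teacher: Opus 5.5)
The survey gives no proof of this statement; it only cites Benini--Rippon--Stallard. Your outline therefore has to stand on its own, and it breaks at the step you flag yourself: the lower bound for $|g|$ on $f^n(K)$, or on the circles $\gamma_n$ in the thick annuli of $U_n$.

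\textbf{The main gap.} Your method never uses $f\circ g=g\circ f$, so the bound you seek would be a property of an arbitrary transcendental entire $g$, and as such it is false. For $g(z)=\sin z$ one has $\min_{|z|=\rho}|g(z)|\le|\sin\rho|\le 1$ for every $\rho$. More generally, $|\sin z|\le 1$ wherever a closed curve winding around $0$ crosses the positive real axis. A zero-free subannulus does not rescue the argument: $e^z$ has no zeros at all, yet $\min_{|z|=\rho}|e^z|=e^{-\rho}$. Harnack's inequality needs $\log|g|>0$ on the subannulus, which is essentially the conclusion you want, and three-circles arguments only bound $\log|g|$ from above. The same example refutes your claim that $g$ has zeros inside $\gamma_n$ ``because $g$ is transcendental''.

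By the theorem itself and Proposition~\ref{unbounded-BWD}, neither $\sin z$ nor $e^z$ can commute with an $f$ that has a BWD. So the statement that actually carries the theorem must be extracted from permutability. That statement is $g(U_n)\cap\mathcal{J}(f)=\emptyset$ for large $n$, equivalently normality of $\{g\circ f^k\}$ on $U$. A natural first input is $g(\mathcal{J}(f))\subseteq\mathcal{J}(f)$. It holds because $g$ sends a repelling periodic point $p$ of $f$ with $g'(p)\neq 0$ to a repelling periodic point of $f$ with the same multiplier. The cited source relies on the Bergweiler--Rippon--Stallard analysis of multiply connected wandering domains (cf.\ Theorem~\ref{increasing-annulus}) together with permutability. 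As written, your first part, on which everything else rests, is unproved.

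\textbf{Repairs once $g(U_n)\subseteq\mathcal{F}(f)$ is granted.} The rest of your outline can then be fixed and simplified.

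(i) Multiple connectivity of the Fatou component $W_n\supseteq g(U_n)$ needs no size estimate. Let $\gamma\subset U_n$ be a Jordan curve around $0$ and let $\zeta\in\mathcal{J}(f)$ lie inside $\gamma$. Then $g(\zeta)\in\mathcal{J}(f)$ is not on $g(\gamma)$. By the argument principle, $g\circ\gamma$ winds around $g(\zeta)$ as many times as $g-g(\zeta)$ has zeros inside $\gamma$, which is at least once.

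(ii) In the second part, $|g(z)|>|z|^2$ is not justified on all of $U_n$. Iterating it would also need uniformity over the infinitely many distinct BWDs of $f$ that the sets $g^m(U_n)$ visit, which you do not have. Instead, the first part gives $g^m(U_n)\subseteq\mathcal{F}(f)$ for every $m$. So $\{g^m\}$ omits the infinite set $\mathcal{J}(f)$ on $U_n$, and Montel's theorem yields $U_n\subseteq\mathcal{F}(g)$. With $f$ and $g$ interchanged, the same argument gives $V\subseteq\mathcal{F}(f)$, hence $V=U_n$.

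(iii) $\mathcal{J}(g)$ need not approach $0$. Non-emptiness suffices, since $\gamma_n$ eventually surrounds any given point.

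(iv) The equality $f^n(U)=U_n$ does not follow from Theorem~\ref{Thm1}. It follows from Herring's theorem together with the absence of finite asymptotic values (Proposition~\ref{unbounded-BWD}).
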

	Theorem~\ref{Benini-BWD} can be used to get a BWD of an entire function from a BWD of another function whenever the two functions commute. In this context, the following question arises.	
	\begin{Question}
		Does there exist an entire function which commutes with the functions known to have BWD such as Examples~\ref{1st exm} and \ref{2nd exm} or the example appearing in Theorem~\ref{exampleBWD-order}?
	\end{Question}
	\subsection{Meromorphic functions}  \label{meromorphic}
	In the presence of a BWD, all the Julia components (except the one containing $\infty$) are bounded. 
 A single bounded Julia component of an entire function implies the existence of a BWD  (by Corollary~\ref{BWD-iff-MCFC}). This is not  true in general for meromorphic functions due to the possible presence of a pole. For example, there are functions with infinitely many poles having a bounded Julia component but no BWD (see ~\cite{NayakZheng 2011}). However, as shown by Zheng in 2002, boundedness of all Julia components gives rise to a BWD for every meromorphic function with finitely many poles. This class of functions is a natural generalization of entire functions.  
	\begin{theorem}(Theorem 1, \cite{Zheng 2002})
		Let $f$ be a meromorphic function with at most finitely many poles, then $f$ has a BWD if and only if all the Julia components (except the one containing $\infty$) of $f$ are bounded.
		\label{FP-BWD-IFF}
	\end{theorem}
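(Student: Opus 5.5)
The statement has two directions, and I would treat them separately. For the forward direction, let $W$ be a BWD of $f$. Choose $n$ large enough that $W_n$ lies outside a disc $D$ containing all the (finitely many) poles and surrounds $D$. Then $W_n, W_{n+1}, \ldots$ are bounded Fatou components, each surrounding $0$, with $\mathrm{dist}(W_n,0)\to\infty$. For each $k$, pick a Jordan curve $\gamma_k\subset W_k$ that is not null-homotopic in $W_k$ and surrounds $0$. These curves form a nested sequence tending to $\infty$.

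Now take any Julia component $K$ not containing $\infty$. Since $K$ is connected and disjoint from every $\gamma_k$, it lies either inside or outside each $\gamma_k$. Because $\gamma_k$ eventually surrounds any given point of $K$, $K$ is inside some $\gamma_k$, and hence bounded. One subtlety: the component of $\infty$ could a priori be unbounded in $\mathbb{C}$, but this is exactly the excluded component. So this direction is essentially topological and needs only the definition of a BWD.

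For the converse, assume every Julia component other than the one containing $\infty$ is bounded. I would argue in four steps.

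\textbf{Step 1.} Show that the component $J_\infty$ of $\mathcal{J}(f)$ containing $\infty$ is $\{\infty\}$, or at least that it does not meet $\mathbb{C}$ near $\infty$. The Julia set is perfect, and preimages of poles accumulate at $\infty$ unless $f$ is entire. Using complete invariance, $f$ maps Julia components into Julia components. If $J_\infty$ contained an unbounded continuum in $\mathbb{C}$, then taking preimages under $f$ (which has finitely many poles) should produce unbounded components other than $J_\infty$, unless $J_\infty$ is backward invariant.

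\textbf{Step 2.} Once $\infty$ is a singleton component (or $J_\infty$ is confined), use the boundedness of all other components together with a compactness (Zoretti/Janiszewski-type) argument. This gives, for large $R$, a Jordan curve in the Fatou set surrounding $\{|z|\le R\}$: the Julia set restricted to an annulus is a compact set with no component joining the two boundary circles. Hence there is a multiply connected Fatou component $U$ surrounding all poles.

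\textbf{Step 3.} Run the Baker-style argument of Theorem~\ref{Thm1} on $U$. Beyond the poles $f$ is analytic in a neighbourhood of the relevant region, so the maximum modulus and argument principles apply to the curves $f^n(\gamma)$. This shows $U$ is wandering, $f^n\to\infty$ on $U$, and $U_n$ is bounded, multiply connected and eventually surrounds $0$.

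\textbf{Step 4.} Rule out the alternative that $f^n(\gamma)$ is null-homotopic or that the $U_n$ are periodic. Periodic multiply connected components of functions with finitely many poles that surround the poles would be Herman rings or similar. I would exclude these using the fact that a curve surrounding all poles maps by $f$ (argument principle, since $f$ has more zeros than poles inside large curves for transcendental $f$) to a curve of large winding number about $0$ and large modulus.

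I expect the main obstacle to be Step 1 together with Step 4: establishing that $\infty$ is not attached to an unbounded Julia continuum, and that the resulting multiply connected component is truly wandering and escaping rather than a Herman ring. For Step 1, I would first try to show that if $J_\infty\cap\mathbb{C}$ were unbounded, then a preimage of it under $f$ near a pole, or under a branch near $\infty$, would give a second unbounded component, contradicting the hypothesis.
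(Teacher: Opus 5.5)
Your forward direction is correct. The curves $\gamma_k$ in fact give more than you record: the Julia component containing $\infty$ is connected, contains $\infty$ and misses every $\gamma_k$. So it lies in the exterior of all of them, and it is just $\{\infty\}$.

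The genuine gap is Step~1 of the converse, and it cannot be closed. Read literally (every component of $\mathcal{J}(f)\subset\widehat{\mathbb{C}}$ other than $J_\infty$ is bounded), the hypothesis makes both Step~1 and the converse itself false. If every Fatou component of $f$ is simply connected, then $\mathcal{J}(f)$ is connected in $\widehat{\mathbb{C}}$. Indeed, splitting $\mathcal{J}(f)$ into two disjoint nonempty closed sets would give a polygonal Jordan curve in $F(f)$ separating a point of one from a point of the other. That curve lies in a single Fatou component, whose complement would then be disconnected. Hence $J_\infty=\mathcal{J}(f)$ and your hypothesis holds vacuously, e.g.\ for $e^z$ (whose Julia set is $\widehat{\mathbb{C}}$) or for $z+e^z$ (Theorem~\ref{Thm 4.2}); neither has a BWD. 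The exception you note yourself, that $J_\infty$ might be backward invariant, is exactly what happens in these examples, so the preimage heuristic cannot work. The theorem has to be read with the hypothesis that every component of $\mathcal{J}(f)\cap\mathbb{C}$ is bounded. By the boundary bumping theorem (for a nondegenerate continuum $K\ni\infty$, every component of $K\setminus\{\infty\}$ accumulates at $\infty$), this is equivalent to requiring that $\{\infty\}$ itself be a Julia component and all others be bounded. Under this reading Step~1 is part of the hypothesis, not something to prove. Your forward argument then yields exactly the stronger conclusion that the ``only if'' part needs.

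With the corrected hypothesis, the remaining steps still need adjusting. In Step~2 the compactness argument works because $J_\infty=\{\infty\}$. Suppose that for every $r'$ some component of $\mathcal{J}(f)\cap\{r\le|z|\le r'\}$ met both boundary circles. A Hausdorff limit of these continua would then be a continuum in $\mathcal{J}(f)$ joining $\{|z|=r\}$ to $\infty$, i.e.\ a nondegenerate subset of $J_\infty$. This gives Jordan curves in $F(f)$ around arbitrarily large discs.

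Steps~3--4 should not be an adaptation of Baker's argument. That argument rests on the maximum principle for $f^n$ on the inside of $\gamma$, which is exactly what the poles inside $\gamma$ destroy. Also, ``more zeros than poles inside large curves'' is not a consequence of transcendence: $e^z/(z-1)$ has one pole and no zeros. The dynamical input you need is Theorem~\ref{Thm2.7} of Rippon and Stallard: a Fatou component surrounding $\{|z|\le r\}$, with $r$ determined by the poles, is a BWD. This also disposes of your Herman-ring concern. The converse therefore reduces to getting such a component from the topological hypothesis, which is what Step~2 is for.
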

	
	The first example of a meromorphic function having a BWD as well as a pole was found by Rippon and Stallard in 2005 (see \cite{RipponStallard 2005(1)}). Their idea of construction was similar to Baker's first example and involves the outer sequence of a meromorphic function  with finitely many poles. An outer set corresponding to a Jordan curve $\gamma$  that winds around $0$  is the closure of the unbounded component of the complement of $\gamma$. An outer sequence for $f$ is a sequence $E_n$ of outer sets corresponding to $\gamma_n$ such that 
		\begin{enumerate}
		\item  there is some circle $\{z:|z|=R\}$ surrounding the poles of $f$ and surrounded by $\gamma_n$ for all $n$;
		\item  the Euclidean distance between $\gamma_n$ and $0$ tends to infinity as $n\to\infty$;
		\item $\gamma_{n+1}\subset f(\gamma_n)$,  and  every component of $f^{-1}(E_{n+1})$ lies in $E_n$ or in $\{z:|z|\leq R\}$ for each $n$.
	\end{enumerate}
	Define
	$$B(f)=\{z:~\mbox{there exists}~ L ~\mbox{such that}~ f^{n+L}(z)\in E_n ~\mbox{for all}~ n\}.$$

 For a meromorphic function $f$ with finitely many poles, there exists an outer sequence for $f$, and $B(f)$ does not depend on any particular choice of the outer sequence. Also, $B(f)$ is always non-empty and is completely invariant under $f$. Moreover, the boundary $\partial B(f) $ of $B(f)$ is the same as $\mathcal{J}(f)$ and $B(f)\cap\mathcal{J}(f)\neq \emptyset$. These results and their proofs can be found in \cite{RipponStallard 2005(1)}. We need the following result in order to explain the next example.
	\begin{theorem}(Theorem 4 and Theorem 2(a), \cite{RipponStallard 2005(1)})\label{BF}
		Let $f$ be a meromorphic function with finitely many poles. Then,
		\begin{enumerate}
			\item There exists an unbounded closed connected subset $\Gamma$ of $B(f)$ such that for every $z_0\in B(f)$, $f^{n_0}(z_0)\in \Gamma$ for some $n_0$.
			\item If $U$ is a Fatou component intersecting $B(f)$, then $U\subset B(f)$ and $U$ is a wandering domain.
		\end{enumerate}   
	\end{theorem}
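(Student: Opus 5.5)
The plan is to prove the two parts separately, relying on the outer sequence $\{E_n\}$ with curves $\gamma_n$ and the disc $\{|z|\le R\}$ containing all poles of $f$. Write $D_n$ for the bounded complementary component of $\gamma_n$, so that $E_n=\widehat{\mathbb C}\setminus D_n$ up to the curve itself.

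\emph{Part 1.} I would build $\Gamma$ as an increasing union of continua joining $\gamma_n$ to $\gamma_{n+1}$.

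First, for each $n$, take a component $K_n$ of $f^{-1}(\overline{E_{n+1}})\cap \overline{A_n}$. Here $A_n$ is the closed "annulus" between $\gamma_n$ and $\gamma_{n+1}$, and the component should meet both boundary curves. Such a component exists because $\gamma_{n+1}\subset f(\gamma_n)$ and, by condition 3, the preimage components of $E_{n+1}$ lie in $E_n$; a maximum-modulus / boundary-bumping argument on $E_n\cap \overline{D_{n+1}}$ then gives a continuum from $\gamma_n$ reaching $\gamma_{n+1}$.

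Second, I would refine this to the set of points $z\in E_n$ whose orbit satisfies $f^k(z)\in E_{n+k}$ for all $k\geq 0$. This set is $\bigcap_k f^{-k}(E_{n+k})\cap E_n$, a nested intersection of closed sets. Each finite stage has an unbounded component meeting $\gamma_n$, since preimages of unbounded components of $E_{n+k}$ that stay off the pole disc are unbounded, by condition 3. A nested sequence of unbounded continua in $\widehat{\mathbb C}$, all containing $\infty$, intersects in a continuum containing $\infty$ and meeting $\gamma_n$. Removing $\infty$, take the component through a point of $\gamma_n$ and call it $\Gamma_n\subset B(f)$; it is closed, connected and unbounded.

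Third, set $\Gamma=\Gamma_0$ with $n=0$. If $z_0\in B(f)$, then $f^{n+L}(z_0)\in E_n$ for all $n$, so $f^{L}(z_0)$ lies in the set $\bigcap_k f^{-k}(E_k)$. The main obstacle is showing it lies in the particular component $\Gamma$ rather than some other component. I would handle this by redefining $\Gamma$ as the closure of the union of all components together with the connecting continua through the $\gamma_n$'s. Each $\gamma_n$ is connected and contained in $E_n$, and every component of $\bigcap_k f^{-k}(E_{n+k})$ reaches infinity through $E_{n}\supset \gamma_{m}$ for $m\geq n$. Hence $\Gamma=\bigcup_n \overline{\Gamma}_n\cup\bigcup_n(\gamma_n\cap B(f))$ is connected, and $f^{n_0}(z_0)\in\Gamma$ with $n_0=L$.

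\emph{Part 2.} Let $U$ be a Fatou component meeting $B(f)$ at $z_0$, so $f^{n+L}(z_0)\in E_n$ and hence $f^m(z_0)\to\infty$. Because $f$ has finitely many poles, $f^m\to\infty$ locally uniformly on $U$. Since $\operatorname{dist}(\gamma_n,0)\to\infty$ and $f^{n+L}(U)$ is a connected set of Fatou points avoiding the pole disc for large $n$, condition 3 applied inductively shows that $f^{n+L}(U)$ lies in $E_n$ for all large $n$, after possibly increasing $L$. Therefore $U\subset B(f)$.

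For wandering, suppose $U$ were eventually periodic. Then some $U_p$ would be a Baker domain, so by the cited fact some iterate $U_k$ would be unbounded. I would then exploit $\partial B(f)=\mathcal J(f)$ together with Part 1: the set $\Gamma$ crosses every $\gamma_n$, and Julia points of $\Gamma$ accumulate on $\gamma_n$. These should separate any invariant unbounded component from $\infty$, which would contradict the invariance $f^p(U_k)=U_k$ combined with $f^{p}(U_k)\subset E_{n+p}$. I expect this last step to need the most care.
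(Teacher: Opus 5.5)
Your Part~1 breaks down at exactly the step you flag as the main obstacle. The set $\bigcup_n\overline{\Gamma}_n\cup\bigcup_n(\gamma_n\cap B(f))$ has no reason to be connected. In general $\gamma_n\not\subset B(f)$, since it crosses Fatou components outside $B(f)$ such as attracting basins, so $\gamma_n\cap B(f)$ bridges nothing. Distinct components of $\bigcap_k f^{-k}(E_k)$ are disjoint closed subsets of $\mathbb C$, and being unbounded does not join them because $\infty\notin\Gamma$.

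In fact no single $\Gamma$ can serve every $z_0$. Take $f(z)=\lambda e^{z}$ with $0<\lambda<1/e$. Its Fatou set is an attracting basin, so $B(f)$ (which is $A(f)$ for entire $f$) lies in the Cantor bouquet $\mathcal J(f)$, whose components are single hairs. A connected $\Gamma$ therefore lies in one hair $h$, and $\bigcup_{n\ge 0}f^{-n}(h)$ consists of only countably many hairs. But far-out points on each of the uncountably many hairs with bounded external address are fast escaping.

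What can be proved is the version in which $\Gamma$ depends on $z_0$, and that is all Example~\ref{Exm 3} uses. Your second step gives it if you base it at $w=f^{L}(z_0)$ instead of at an unspecified point of $\gamma_n$. Let $Q_N$ be the component of $\bigcap_{k\le N}f^{-k}(E_k)$ containing $w$. You recover $Q_N$ by pulling back from $E_N$ one step at a time, each time taking the component through $f^k(w)$.
\begin{itemize}
\item Condition~3 keeps each such component inside $E_k$.
\item Each is unbounded because a bounded component of the preimage of an unbounded continuum must contain a pole. This, not condition~3, is the source of unboundedness.
\item The sets $Q_N\cup\{\infty\}$ are nested continua. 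By boundary bumping, the component of $\bigcap_N Q_N$ containing $w$ is closed, unbounded and contained in $B(f)$; take $n_0=L$.
\end{itemize}
The continua $K_n$ of your first step are not needed.

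In Part~2 the inclusion $U\subset B(f)$ is not established. Condition~3 only transports membership backwards along an orbit: if $f^{m}(z)\in E_n$ and $f^{m-1}(z)$ is not in the pole disc, then $f^{m-1}(z)\in E_{n-1}$. Locally uniform convergence $f^m\to\infty$ says nothing about whether $f^m(z_1)$ keeps pace with $f^m(z_0)$. There are two fixes:
\begin{itemize}
\item use $\partial B(f)=\mathcal J(f)$, quoted just before the theorem: a connected $U$ meeting $B(f)$ and disjoint from $\partial B(f)$ lies in $B(f)$;
\item or give a quantitative comparison, via Harnack's inequality for $\log|f^m|$ near a curve from $z_0$ to $z_1$ plus a growth property of the $\gamma_n$.
\end{itemize}

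The wandering argument is on the wrong track. An unbounded component cannot be separated from $\infty$. Moreover, Baker domains coexist with $B(f)\neq\emptyset$: Fatou's function $z+1+e^{-z}$ has an invariant Baker domain containing a half-plane. So no topological contradiction is available.

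The missing idea is a comparison of escape rates. If some $U_k$ were periodic, Lemma~\ref{growth-on-periodicFC} would give $\log\log|f^n(z)|=O(n)$ for $z\in U$. On the other hand, $f^{n+L}(z)\in E_n$ forces $\log\log|f^n(z)|/n\to\infty$, provided $\log\log\operatorname{dist}(0,\gamma_n)/n\to\infty$. This growth of the outer sequence (for instance $\log\operatorname{dist}(0,\gamma_{n+1})/\log\operatorname{dist}(0,\gamma_n)\to\infty$) is the real content. It does not follow from condition~3 by soft arguments; it must be extracted from the definition of an outer sequence together with the transcendence of $f$. It is the same mechanism the survey uses in Example~\ref{BWD-infinitelymanypoles}.
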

	
\begin{example}(\cite{RipponStallard 2005(1)})\label{Exm 3}
	If $g$ is as given in (\ref{Baker-first}) then  $f(z)=\left(\frac{z}{z-a}\right) g(z)$ has a BWD for every $a \in (0, \frac{1}{6})$.
\end{example}
\begin{proof}
	Consider $F(z)=\left(\frac{z}{z-a}\right) f(z)$ where $f(z)$ is the entire function discussed in Example \ref{1st exm}. Let $0<a<\frac{1}{6}$. Then, for $|z| \leq 2a < \frac{1}{3}$, we have by Property 1 in Example~\ref{1st exm} that     $|f(z)| < \frac{1}{4} |z|^2 < \frac{1}{12}|z|$.   Now  for $|z-a|\geq\frac{a}{2}$, we have $|\frac{z}{z-a}|\leq 1+|\frac{a}{z-a}|\leq 3$. Then $|f(z)| < \frac{1}{4}|z|$ for all $z$ in the doubly connected region $S= \{z: |z-a|\geq\frac{a}{2} ~\mbox{and}~ |z|\leq 2a \}$. Since $f$ has a super-attracting fixed point at $0$, so has $F$. It now follows that the set $S$ is contained in the immediate basin of attraction of $0$, i.e., the component of the attracting basin of $0$ containing $0$. This immediate basin of attraction surrounds a pole. \par Let $A_n=\{z:r_n^2<|z|<\sqrt{r_{n+1}}\}$ and $A'_n=\{z:(1+\epsilon)r_n^2<|z|<(1-\epsilon)\sqrt{r_{n+1}}\}$ for some $\epsilon >0$. This $\epsilon$ can be seen to be independent of $n$. From Example \ref{1st exm}, it follows that $f(A_n)\subset A'_{n+1}$  for all large $n$. 
	
	Since $\lim_{z \to \infty}
	\frac{F(z)}{f(z)} = 1$, there is $M>0$ such that for $|z|>M,$ we have $ |F(z)|- |f(z)| \leq |F(z)- f(z)|< \epsilon |f(z)|$. This gives that $
	|F(z)| < (1+\epsilon) |f(z)|< (1-\epsilon^2) \sqrt{r_{n+2} }<\sqrt{r_{n+2} }$ for $z \in A_n '$. Considering 
	 $\lim_{z \to \infty}
	\frac{f(z)}{F(z)} = 1$ and arguing similarly we get $|f(z)|-|F(z)| \leq |F(z)-f(z)|< \epsilon |F(z)|$ for $|z|>M$. This gives that $(1+\epsilon)|F(z)| >|f(z)|> (1+\epsilon) r_{n+1}^2$. In other words, $|F(z)| > r_{n+1}^2$. Therefore $F(A_n) \subset A_{n+1}$  for all sufficiently large $n$. Therefore, $A_n$ lies in some multiply connected Fatou component of $F$, say $U_n$. By Theorem \ref{BF}(1), $U_n$ intersects $B(f)$ for all sufficiently large $n$, and by Theorem \ref{BF}(2), $U_n$ is wandering. Clearly, all $U_n$s are distinct and hence $U_n$ is a BWD of $F$. 
	\end{proof}	
	In 2005, Rippon and Stallard also provided some criteria for the existence of a BWD for a meromorphic function with finitely many poles. 
	\begin{theorem}(Theorem 3, \cite{RipponStallard 2005(1)})\label{Thm2.7}
		Let $f$ be a meromorphic function with a finite number of poles. There exists $r>0$ such that if $U$ is a Fatou component surrounding the disk $\{z:|z|\leq r\}$ then $U$ is a BWD of $f$.	
	\end{theorem}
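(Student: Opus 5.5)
Choose $r$ so that the disk $D_r=\{z:|z|\leq r\}$ contains all the poles of $f$, and also so that it lies inside the circle $\{|z|=R\}$ used to build an outer sequence $E_n$ in the sense of Rippon--Stallard. The strategy is to show that a Fatou component $U$ surrounding $D_r$ meets $B(f)$. Theorem~\ref{BF}(2) then gives that $U\subset B(f)$ and that $U$ is wandering. The remaining properties of a BWD (escaping, bounded, multiply connected, surrounding $0$) will be read off from the outer-set structure.

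\emph{Step 1: $U$ is bounded, multiply connected, and its image components are controlled.} Since $U$ surrounds $D_r$, which contains poles and hence Julia points, the component of $\widehat{\mathbb C}\setminus U$ containing $D_r$ is bounded. So $U$ is multiply connected. I would next show that $U$ is bounded: by Theorem~\ref{BF}(1), $B(f)$ contains an unbounded connected set $\Gamma$, and $\partial B(f)=\mathcal J(f)$. Together these should force Julia points outside any prescribed compact set, in every direction, which prevents $U$ from containing a neighbourhood of $\infty$. Any Jordan curve $\sigma\subset U$ surrounding $D_r$ avoids the poles, so $f(\sigma)$ is a compact curve. By the argument principle, $f(\sigma)$ winds around $0$ as many times as there are zeros minus poles inside $\sigma$; choosing $r$ so that $D_r$ also contains a zero should make this nonzero. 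If that fails, I would replace $0$ by a suitable point.

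\emph{Step 2: $U$ meets $B(f)$.} This is the main obstacle. I would exploit property (3) of outer sequences: every component of $f^{-1}(E_{n+1})$ lies in $E_n$ or in $\{|z|\leq R\}$. The plan is to show that $U$ contains points of $\Gamma$. Since $\Gamma$ is unbounded and connected, and since the orbit of every point of $B(f)$ eventually enters $\Gamma$, it is enough to know that $\Gamma$ contains points of $\{|z|\leq R\}$ or points surrounded by $U$. The set $\Gamma$ escapes to $\infty$, so it must cross the closed curve $\sigma\subset U$. This argument requires that $\Gamma$ starts inside $\sigma$. To arrange this, I would enlarge $r$ (as in Rippon--Stallard) so that $\Gamma$ can be taken to meet $D_r$; their construction builds $\Gamma$ from preimages of the $E_n$ which reach the circle $|z|=R$. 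Then $\Gamma\cap\sigma\neq\emptyset$, so $U\cap B(f)\neq\emptyset$.

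\emph{Step 3: conclusion.} By Theorem~\ref{BF}(2), $U\subset B(f)$ and $U$ is wandering. Each point $z\in U$ satisfies $f^{n+L}(z)\in E_n$, and $\mathrm{dist}(\gamma_n,0)\to\infty$, so $f^n\to\infty$ on $U$. Applying Step 2 to $U_n$ requires that $U_n$ also surrounds $D_r$. For this, the curve $f^n(\sigma)$ lies in $E_{n-L}$ and winds around $0$ by the argument principle, so it surrounds $D_r$ for large $n$. The boundedness argument of Step 1 then applies to every $U_n$, so each $U_n$ is bounded and multiply connected. Hence $U$ is a BWD.
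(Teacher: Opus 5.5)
\textbf{There is a genuine gap.} It lies in Step 3, with Step 1 feeding into it, and not in Step 2.

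\emph{Step 2 is the easy part.} $\Gamma$ is a fixed nonempty set, so choose $r\ge \mathrm{dist}(0,\Gamma)$. Then the unbounded continuum $\Gamma$ must meet any Jordan curve $\sigma\subset U$ surrounding $D_r$. Theorem~\ref{BF}(2) now gives $U\subset B(f)$, $U$ wandering, and $f^n\to\infty$ on $U$.

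\emph{Step 3 is where the real content is, and your justification does not work.} What remains to show is that $U_n$ is bounded, multiply connected and surrounds $0$ for all large $n$. The argument principle only says that the winding number of $f^n(\sigma)$ about $0$ equals the number of zeros minus the number of poles of $f^n$ inside $\sigma$. Here $f^n$ has poles at every pre-pole of order at most $n$ inside $\sigma$, and nothing in your proposal stops the two counts from cancelling. Putting a zero of $f$ in $D_r$ is irrelevant to this. Knowing $f^n(\sigma)\subset E_{n-L}$ does not help either, since a small loop far from $0$ also lies in $E_{n-L}$. The survey's own citation shows this step cannot be automatic: Example~2 of \cite{RipponStallard 2008} is a doubly connected wandering domain of a function with finitely many poles all of whose images are simply connected.

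\emph{Step 1's boundedness argument is also invalid.} An unbounded Fatou component need not contain a neighbourhood of $\infty$, and Julia points near $\infty$ do not rule out unboundedness. Boundedness only follows afterwards: once you know $U$ is wandering and the $U_m$ contain curves surrounding discs whose radii tend to $\infty$, the domain $U$ is trapped inside such a curve.

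\textbf{The missing idea is the one encoded in the paper's choice of $r$.} Take $R$ so that all poles lie in $\{|z|<R\}$, and set $r=\max\{R,\sup_{|z|=R}|f(z)|\}$. Thus $D_r$ must contain not only the poles but also the image $f(\{|z|=R\})$.

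With this choice, let $\sigma$ be any Jordan curve surrounding $D_r$, and let $|w|>r$ with $w\notin f(\sigma)$. Apply the argument principle on the pole-free region between $\{|z|=R\}$ and $\sigma$. Since $f(\{|z|=R\})\subset D_r$ does not wind around $w$, the number of $w$-points of $f$ in that annulus equals the winding number of $f(\sigma)$ about $w$. In other words, as seen from values of modulus greater than $r$, the poles are invisible, and on this annulus $f$ can be handled as an entire function would be when controlling how images of curves wind.

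Your $r$ has no relation to the size of $f$ near the poles. Initially you even place $D_r$ inside $\{|z|=R\}$, which is the opposite of $r\ge R$. So this mechanism is unavailable to you, and the conclusion that the $U_n$ eventually surround $0$, and hence are multiply connected and bounded, is left unproved.
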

	In the above theorem,   $r=\max\{R, \sup_{|z|=R} |f(z)|\}$ where $R=\max\{|z|: f(z)=\infty\}$.

A multiply connected Fatou component of a  meromorphic function with infinitely many poles is not necessarily BWD. For example, $z+2+e^{-z}+\frac{1}{100(z-1-i\pi)}$ has a multiply connected invariant Fatou component (see Example $1$ in \cite{Dominguez 1998}). Another example is the function $\lambda\sin z-\frac{\epsilon}{z-\pi}$ where $0<\lambda<1$ and $\epsilon>0$. For sufficiently small $\epsilon$, the Fatou set is a single completely invariant domain of infinite connectivity (see Example $2$ in \cite{Dominguez 1998}). Later in 2008, Rippon and Stallard constructed an example of a meromorphic function with finitely many poles with a bounded doubly connected wandering domain such that each iterated image of the wandering domain is bounded and simply connected (see Example $2$ in \cite{RipponStallard 2008}). Moreover, they proved that for suitably small values of $a$ and $\epsilon$, the function $2+2z-2e^z+\frac{\epsilon}{e^z-e^a}$ has a wandering domain such that each iterated image of the wandering domain is bounded and infinitely connected but the wandering domain is not itself a BWD (see Example 1, \cite{RipponStallard 2008} for details).   
	
	\par  Rippon and Stallard provide a necessary and sufficient condition for a meromorphic function with finitely many poles to have BWDs.  Recall that, for a Fatou component $U$ of $f$, $U_n$ denotes the Fatou component containing $f^n (U)$.
	
	\begin{theorem}(Theorem 1, \cite{RipponStallard 2008})
		Let $f$ be a meromorphic function with finitely many poles and $U$ be a multiply connected wandering domain of $f$. Then
		\begin{enumerate}
			\item $U$ is a BWD if and only if $U_n$ is multiply connected for infinitely many values of $n$. 
			\item If $~\mbox{sing}~(f^{-1})\cap \cup_{n\geq 1} U_n= \emptyset$ then $U_n$ is multiply connected for all $n$ and therefore $U$ is a BWD.
		\end{enumerate}
\label{MC-IFF-BWD-FP}	\end{theorem}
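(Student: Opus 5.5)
The plan is to reduce everything to the outer-sequence machinery of Theorem~\ref{BF} and the criterion of Theorem~\ref{Thm2.7}. Let $R$ be such that all poles of $f$ lie in $\{|z|<R\}$, and let $r$ be as in Theorem~\ref{Thm2.7}.

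\textbf{Part (1), forward direction.} If $U$ is a BWD, then by definition $U_n$ is multiply connected for all large $n$, so this direction is immediate.

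\textbf{Part (1), converse.} Suppose $U_n$ is multiply connected for infinitely many $n$. The goal is to find some $n$ for which $U_n$ surrounds $\{|z|\le r\}$; Theorem~\ref{Thm2.7} then says $U_n$ is a BWD, and hence so is $U$.

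To find such an $n$, take a Jordan curve $\gamma\subset U$ that is not null-homotopic in $U$, so it separates $\partial U$. Since $U$ is wandering, the limit functions on $U$ are constants. I would first rule out the case where all limit functions are finite, as follows.
\begin{itemize}
\item Choose $n$ with $U_n$ multiply connected and a non-contractible curve $\sigma_n\subset U_n$.
\item The bounded complementary component of $\sigma_n$ meets $\mathcal J(f)$, so by Montel's theorem and the blowing-up property, $f^k$ on the interior of $\sigma_n$ takes values near $\infty$.
\item By the maximum principle this interior must contain a pole of some $f^k$, i.e.\ a preimage of $\infty$.
\item Using that there are only finitely many poles, and that $\sigma_n$ can be chosen so that $f^k(\sigma_n)$ stays in a compact set, I expect to show that $f^k(\sigma_n)$ must eventually wind around a pole, and hence around $\{|z|\le R\}$.
\end{itemize}

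The cleaner route is to use $B(f)$ directly. Take a non-contractible curve $\sigma$ in $U_m$ whose interior contains Julia points. Then $f^k(\sigma)$ either stays bounded, or its images are large curves winding around the poles. In the latter case they lie in an outer set $E_n$, so $U_m$ meets $B(f)$. If instead $f^k(\sigma)$ stays bounded while the interior of $\sigma$ contains points of the Julia set, then by the argument principle the images must cover a neighbourhood of $\infty$, which forces a pole inside. Doing this along infinitely many multiply connected $U_n$ forces some image curve to surround the disk of radius $r$.

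\textbf{Main obstacle.} I expect the key step to be showing that $U_n$ multiply connected for infinitely many $n$ forces $U_n$ to eventually surround the poles. This is where finiteness of the poles is essential: a non-contractible loop in $U_n$ that does not surround $\{|z|\le R\}$ encloses a region in which $f$ is analytic. Iterating and using the maximum modulus principle together with $\partial B(f)=\mathcal J(f)$ is how I would handle it.

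\textbf{Part (2).} Assume $\mathrm{sing}(f^{-1})\cap\bigcup_{n\geq1}U_n=\emptyset$. Then $f\colon U_{n}\to U_{n+1}$ is a covering map with no critical points; it is a proper map, so it is a finite covering. By the Riemann--Hurwitz formula, $U_{n+1}$ is simply connected only if $U_n$ is. Concretely, a finite unbranched covering of a simply connected domain is a homeomorphism, so multiple connectivity propagates forward:
\[
c(U_n)>1 \implies c(U_{n+1})>1,
\]
and inductively every $U_n$ is multiply connected. Part (1) then gives that $U$ is a BWD.

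The only delicacy in Part (2) is properness of $f$ on $U_n$. This follows because $U_{n}$ is bounded for large $n$ and contains no poles, and in general a non-proper restriction would produce an asymptotic value in $U_{n+1}$, which is excluded by the hypothesis.
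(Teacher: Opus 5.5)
The survey only quotes this theorem from Rippon--Stallard and gives no proof, so your proposal has to stand on its own. The forward half of (1) is immediate from the definition. The covering argument in (2) is sound; you do not need properness or a finite degree, because any covering of a simply connected domain by a domain is a homeomorphism. (Justifying properness by ``$U_n$ is bounded for large $n$'' would in any case be circular.)

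The gap is the converse in (1). This is the real content of the theorem, and the last clause of (2) depends on it. You correctly reduce it, via Theorem~\ref{Thm2.7}, to finding some $U_n$ that surrounds $\{|z|\le r\}$. But the sentence ``doing this along infinitely many multiply connected $U_n$ forces some image curve to surround the disk of radius $r$'' only restates what has to be proved, and the steps offered towards it fail.

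(a) The step ``$f^k$ takes values near $\infty$ inside $\sigma_n$, so by the maximum principle the interior contains a pole of some $f^k$'' is valid only if $f^k(\sigma_n)$ stays in a fixed compact set. You cannot arrange that. For an entire function with a BWD (Example~\ref{1st exm}), no iterate has a pole, yet $f^k$ takes values near $\infty$ inside every non-contractible loop.

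(b) The dichotomy ``$f^k(\sigma)$ stays bounded, or its images are large curves winding around the poles'' is false. A curve far from $0$ need not wind around anything in $\{|z|\le R\}$; it can be a small loop around a distant Julia point. Also, lying in an outer set $E_n$ at one time does not put a point into $B(f)$, which requires $f^{n+L}(z)\in E_n$ for \emph{all} $n$.

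(c) Even when some image loop $\sigma$ winds around a pole, the winding number of $f\circ\sigma$ about $0$ equals (zeros enclosed) minus (poles enclosed), and this can vanish. So ``encloses a pole'' does not propagate to ``surrounds the disk''.

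Here is what a proof has to supply. Take a non-contractible Jordan curve $\gamma\subset U_m$.

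If no iterate $f^k$ has a pole inside $\gamma$, Baker's entire-function argument works. Apply the maximum principle to $f^k$ and to $1/f^k$ on the closed interior, and use non-normality at the Julia points inside $\gamma$. This gives some $k$ with $f^k(\gamma)\subset\{|w|>r\}$ and nonzero winding number about $0$, hence about the whole disk. From there Theorem~\ref{Thm2.7} can be brought to bear.

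The other case is where some $f^k(\gamma)$ encloses a pole. This is the typical case once $f$ has poles, since preimages of $\infty$ are dense in $\mathcal{J}(f)$ apart from an exceptional case. It is exactly where the finiteness of the poles and the hypothesis ``for infinitely many $n$'' must be used, for instance through the $B(f)$ machinery of Theorem~\ref{BF}. Your proposal contains no argument for this case, so the main implication remains unproved.
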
 
	
The next example stands apart from the previous ones and also is the first example of a BWD in the presence of infinitely many poles \cite{RipponStallard 2005(1)}. We utilize the following fact (see Lemma 7 in \cite{Berg 1993} and Equation (1.1), \cite{RipponStallard 2005(1)}) in the next example.
	\begin{lemma}
		Let $U$ be a periodic Fatou component $U$ of a meromorphic function $f$ such that $\lim_{n \to \infty}f^n (z) =\infty$ for each $z \in U$ then there exists $M>0$ such that   $\frac{\log \log |f^n(z)|}{n} \leq M$   for all sufficiently large $n$.
		\label{growth-on-periodicFC}
	\end{lemma}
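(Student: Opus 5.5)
The idea is that on a periodic Fatou component the orbit can grow at most exponentially in hyperbolic terms, hence at most doubly exponentially in the Euclidean metric. Replacing $f$ by $f^p$, where $p$ is the period of $U$, we may assume $U$ is invariant; a bound for $f^{np}$ then transfers to $f^{np+j}$ for $0\le j<p$. To see this, apply the invariant-case bound to the invariant components $U_j$ of $f^p$, which are also escaping. Since the growth rate per iterate changes only by the factor $p$, the constant $M$ is adjusted accordingly. So fix $z\in U$ with $f(U)\subseteq U$ and $f^n(z)\to\infty$.

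The component $U$ is hyperbolic, because its complement contains the Julia set, which has more than two points. By the Schwarz--Pick lemma, $f:U\to U$ does not increase the hyperbolic metric $\rho_U$. Hence
\[
\rho_U\bigl(f^{n}(z),f^{n+1}(z)\bigr)\le \rho_U\bigl(z,f(z)\bigr)=:c,
\]
and by the triangle inequality $\rho_U(z,f^n(z))\le cn$.

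Next I would convert hyperbolic distance into Euclidean growth. Choose two distinct finite points $a,b\in\mathcal J(f)$; these exist since the Julia set is perfect and infinite. Then $U\subseteq \widehat{\mathbb C}\setminus\{a,b,\infty\}=:\Omega$, and by monotonicity of the hyperbolic metric $\rho_\Omega\le\rho_U$. Normalise by a Möbius map so that $\Omega$ becomes the standard thrice-punctured plane $\mathbb C\setminus\{0,1\}$; this changes $|w|$ only by bounded factors for large $|w|$. Near $\infty$ the hyperbolic density of $\mathbb C\setminus\{0,1\}$ satisfies $\lambda(w)\ge \dfrac{C_0}{|w|\log|w|}$ for $|w|\ge 2$. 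Integrating along any path from a fixed point $w_0$ to $w$ gives
\[
\rho_\Omega(w_0,w)\ge C_0\log\log|w|-C_1 .
\]
This density estimate is the key step. I would cite the standard Landau-type estimate rather than reprove it; alternatively, it follows from comparing with the punctured disc model near the puncture at $\infty$.

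Combining the two bounds:
\[
C_0\log\log|f^n(z)|-C_1\le \rho_\Omega(z,f^n(z))\le \rho_U(z,f^n(z))\le cn .
\]
Therefore $\dfrac{\log\log|f^n(z)|}{n}\le \dfrac{c}{C_0}+\dfrac{C_1}{C_0n}$, which is at most $M$ for all large $n$. The resulting $M$ depends on $z$ through $c$, which is what the statement allows. Uniformity on compact subsets would also follow, since $c$ varies continuously in $z$.

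The main obstacle is making the lower density bound precise near $\infty$ and checking that it is compatible with $\infty$ possibly being a pole image. This is harmless: only the finite punctures $a,b$ and the puncture at $\infty$ of $\Omega$ matter, and the orbit itself stays in $U\subset\mathbb C$.
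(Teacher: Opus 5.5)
Your proof is correct, and it is essentially the standard argument behind the fact the paper quotes without proof (the paper only cites Lemma 7 of Bergweiler's 1993 survey and Equation (1.1) of Rippon--Stallard). Schwarz--Pick for $f^p$ on the hyperbolic domain $U$ gives $\rho_U(z,f^n(z))=O(n)$, and the density bound $\lambda(w)\ge C_0/(|w|\log|w|)$ near $\infty$ for $\mathbb{C}$ minus two Julia points turns this into $\log\log|f^n(z)|=O(n)$.
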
  
	
\begin{example}
	There is a meromorphic function with infinitely many poles which has a BWD.
	\label{BWD-infinitelymanypoles}
\end{example}
\begin{proof}

	Consider $F_\infty (z)=\left(1+\sum_{n=1}^{\infty}\frac{1}{z-r_n}\right) f(z)$ where $f(z)$ is the entire function discussed in Example \ref{1st exm}. The point $-r_n$ is a zero and $r_n$ is a pole for each $n$. We first show that  $\left(1+\sum_{n=1}^{\infty}\frac{1}{z-r_n}\right)\to1$ as $z\to\infty$ through the annuli $A_n=\{z:r_n^2<|z|<\sqrt{r_{n+1}}$. Note that there is neither zero nor any pole in this annulus. For this, note that
	\begin{equation}\label{Eq13}
		\sum_{k=1}^{\infty}\frac{1}{z-r_k}=\sum_{k=1}^{n}\frac{1}{z-r_k}+\frac{1}{z-r_{n+1}}+\sum_{k=n+2}^{\infty}\frac{1}{z-r_k}.
	\end{equation} For  $k \leq n,  $ we have $r_k \le r_n$. Since $|z| > r_n^2$, 
	$|z - r_k| \ge |z| - r_k \ge r_n^2 - r_n \ge \tfrac{r_n^2}{2}$ for all sufficiently large $n$, we get
	$$\left| \sum_{k=1}^{n} \frac{1}{z - r_k} \right|
	\le \sum_{k=1}^{n} \frac{1}{|z - r_k|}
	\le\frac{2n}{r_n^2} < \frac{2n}{2^{2(n-1)}r_1 ^2}.$$ 
	The last inequality follows from Inequality~(\ref{growth-rn}).
	
	We also have, 	$|z - r_{n+1}| \ge r_{n+1} - |z| \ge r_{n+1} - \sqrt{r_{n+1}} \ge \tfrac{1}{2}r_{n+1}
	$ for all sufficiently large $n$. Similarly, for $k \geq n+2$,  we have $r_k \ge r_{n+2}$. Since  $|z| < \sqrt{r_{n+1}}$,
	$|z - r_k| \ge r_k - |z| \ge r_{n+2} - \sqrt{r_{n+1}} \ge \tfrac{C r_{n+1}^2}{2}$ for all sufficiently large $n$. Consequently,
	$$\left|\sum_{k = 1}^{\infty}  \frac{1}{z - r_k} \right|	
	\le \frac{2n}{2^{2(n-1)}r_1 ^2}+ \frac{2}{r_{n+1}} + \frac{2}{Cr_{n+1}^2}.$$
	
	As $z\to\infty$   through   $A_n $, $r_n \to \infty$. Therefore $\sum_{k=1}^{\infty}\frac{1}{z-r_k}\to 0$ and  $\left(1+\sum_{n=1}^{\infty}\frac{1}{z-r_n}\right)\to1$ as $z\to\infty$   through $A_n $.
	\par 
	Following the arguments used in Example~\ref{Exm 3}, we conclude that $F_\infty(A_n) \subset A_{n+1}$ for all sufficiently large $n$. This gives that there are Fatou components say $U_n$  containing these $A_n$s.
	
	\par For $z \in A_n$, let $a_k = \log |F_{\infty}^{k}(z)|$. Then $a_{k+2}>\log  r_{n+k+2}^2 $ and $a_k < \log  \sqrt{r_{k+n+1}}$. Consequently, $ \frac{a_{k+2}}{a_k}> \frac{4 \log r_{n+k+2} }{\log r_{k+n+1}}.$
	It follows from Equation~(\ref{logrn-limit}) that $\lim_{k \to \infty}\frac{a_{k+2}}{a_k}=\infty$.

	Let $M>1$. Then there exists a $k_0$ such that $a_{k+2}> Ma_{k}$ for all $k \geq k_0$. This gives that $a_{2k}> M^{k-k_0}a_{2k_0}$. Taking logarithm and then dividing by $k$ on both the sides, we have
	$\frac{\log a_{2k}}{k} >\frac{\log (\frac{a_{2k_0}}{M^{k_0}})}{k}+ \log M$. This means that  $\lim_{k \to \infty} \frac{\log \log {|F_{\infty}^{2k}(z)|}}{k} =\infty,$  for each $z \in A_n$ and for all sufficiently large $n$. In view of Lemma~\ref{growth-on-periodicFC}, $U_n$, the Fatou component containing $A_n$ is not periodic for any $n$. In other words, $U_n$ is wandering and hence a BWD.
\end{proof}
In view of Examples \ref{Exm 3} and \ref{BWD-infinitelymanypoles}, the following question arises.
\begin{Question}
	If $f$ is an entire function with BWD $U$ and $g$ is a meromorphic function such that $\lim_{z \to \infty}g(z)=1$ along $U_n$ then is it always true that the product $fg$ has BWD?
	\end{Question}
	
A related question is also raised by Zheng in 2010, probably in a conference in Warsaw.
\begin{Question}
Let an entire function $f$ have BWD and $Q$ is a rational function such that $\lim_{z \to \infty} Q(z)$ exists and is finite. Is it always true that $f+Q$ has a BWD? 
\end{Question}

	\section{Singular values and Dynamics}
	This section discusses the singular values and dynamics of a function in the presence of a BWD.
	
	\subsection{Singular values}\label{singularvalues-section}
	 A point $a\in\widehat{\mathbb C}$ is called a singular value of a meromorphic function $f$ if for every open neighborhood $U$ of $a$, there exists a component $V$ of $f^{-1}(U)$ such that $f:V\to U$ is not injective, meaning that at least one branch of $f^{-1}$ fails to be defined at the point $a$. This is why a singular value of $f$ is also called as a singularity of $f^{-1}$. There are different possible ways in which this failure can take place, leading to the following classification  \cite{BergEreme 1995}. 
	
 For $a\in \mathbb{{\widehat{C}}}$ and $r > 0$, let $D_r(a)$ be a disk (in the spherical metric) and choose a component $U_r$ of $f^{-1}(D_{r}(a))$ in such a way that $U_{r_{1}}\subset U_{r_{2}}$ for $0 < r_1 < r_2$. There are two possibilities. 
 \begin{enumerate}
			\item If $\bigcap_{r>0}U_{r}=\{z\}$ for $z\in \mathbb{C},$ then $f(z)=a$.  If $z$ is a multiple pole or  $f'(z)= 0$ and $a\in \mathbb C$, then $z$ is a critical point.
			In this case, $a$
			is called a critical value and we say that a critical point.
			\item If $\bigcap_{r>0}U_{r}=\emptyset$ then we say that the choice $r\mapsto U_r$ defines a transcendental singularity of $f^{-1}$. We say that a transcendental singularity lies over $a$ and $a$ is an asymptotic value of $f$. In this case, there is an unbounded curve $\gamma: (0, \infty) \to \mathbb{C}$ such that $\gamma(t) \to \infty$ as $t \to \infty$ and $\lim_{t \to \infty}f(\gamma(t)) =a$.
			 The singularity lying over $a$ is called direct if there exists $r>0$ such that $f(z)\neq a$ for all $z\in U_r$. A singularity is indirect if it
			is not direct. The singularity lying over $a$ is called logarithmic if $f:U_r\to {D_r(a)\setminus\{a\}}$ is a universal covering for some $r>0$.
	\end{enumerate} 
The relationship between the singular values and periodic Fatou components of a meromorphic function is well-known (see Theorem 7 in \cite{Berg 1993} for further details).  The  post-singular set of a function $f$ is the union of forward orbits of all its singular values as long as these are defined. Bergweiler et al., in 1993, proved that if  $U$ is a wandering domain of an entire function $f$ then all the limit functions of $\{f^n\}_{n>0}$ on $U$ are contained in the union of the derived set of the post-singular set and $\{\infty\}$ \cite{Bergetal 1993}. Later, in 2002, Zheng extended this for meromorphic functions with finitely many poles (Theorem 4, \cite{Zheng 2002}).  
	\par We need the notion of exponent of convergence in order to state a relation between BWD and singular values of a function.  The exponent of convergence of zeros of a transcendental entire function $f$,  denoted by $\lambda(f)$ is defined as \begin{equation}
		\inf\left\{\lambda>0:\sum_{n=1}^{\infty}\frac{1}{|a_n|^\lambda}<\infty, f(a_n)=0,  a_n \neq 0 \right \}.
		\label{exponentofconvergence}\end{equation}  
	
	The exponent of convergence of zeros of a function does not make sense if the function has only finitely many zeros. It is known that  $\lambda(f) \leq \rho(f)$ where $\rho(f)$ denotes the order of $f$ (Theorem 2.5.18, \cite{Boas-1954}).  In 2002, Cao and Wang proved the following when  $\lambda(f) $ is strictly less than  $ \rho(f)$.
	\begin{theorem}(Theorem 2, \cite{CaoWang 2002})
		Let $f$ be an entire or meromorphic function with finitely many poles and $\lambda(f)<\rho(f)$. If $U$ is a BWD of $f$, then $U_j\cap sing(f^{-1})\neq\emptyset$ for some $j$.
	\label{sing-BWD}\end{theorem}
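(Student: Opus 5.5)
We argue by contradiction: assume $U_j\cap sing(f^{-1})=\emptyset$ for every $j\geq 0$, and aim to show that this forces $\rho(f)\leq\lambda(f)$. The point is that, without singular values in the orbit, the annuli $U_n$ are mapped by covering maps with almost no distortion of moduli, which gives a rigid description of $\log|f|$ on large round annuli.

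\textbf{Step 1: the maps $f:U_n\to U_{n+1}$ are unbranched coverings.} By the definition of a BWD, for large $n$ each $U_n$ is bounded, multiply connected, surrounds $0$, and $\mathrm{dist}(U_n,0)\to\infty$. Each restriction $f:U_n\to U_{n+1}$ is proper, and by assumption it has no critical points. The Riemann--Hurwitz formula then shows that it is an unbranched covering and that $c(U_n)=c(U_{n+1})$. By Theorem~\ref{MC-IFF-BWD-FP}(2), every $U_n$ is multiply connected.

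\textbf{Step 2: $U_n$ contains a large round annulus.} Using the hyperbolic metric, and the known result that for entire functions (or functions with finitely many poles) $U_n$ contains a round annulus $A(r_n^{a},r_n^{b})$ with $r_n\to\infty$, I would take the standard estimate that for large $n$ there is $A_n=A(s_n,t_n)\subset U_n$ with $\log t_n/\log s_n\to\infty$. This is the Bergweiler--Rippon--Stallard / Zheng type modulus growth. The covering property of Step~1 keeps the moduli comparable along the orbit, and the degree on the core curve equals the winding number.

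\textbf{Step 3: $\log|f|$ on $A_n$ is essentially harmonic of the form $N_n\log|z|+c_n$.} On $A_n$, $f$ has no zeros. The function $\log|f(z)|-N_n\log|z|$ is harmonic there and has zero period, where $N_n=n(s_n)$ is the number of zeros of $f$ in $|z|<s_n$ (plus the fixed correction from the finitely many poles). Since $f(A_n)\subset U_{n+1}$, which lies in a thin ring in logarithmic scale, Harnack's inequality on the middle of $A_n$ gives
\begin{equation*}
\log M(r,f)=N_n\log r+O(\log s_n)+o(N_n\log r)
\end{equation*}
for $r$ in the middle subannulus, say $r\in[s_n^{2},t_n^{1/2}]$.

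\textbf{Step 4: bound $N_n$ and derive the contradiction.} Fix $\lambda(f)<\mu<\rho(f)$. From $\sum|a_k|^{-\mu}<\infty$ we get $n(r)=o(r^{\mu})$, so $N_n=o(s_n^{\mu})$. Applying Step~3 at $r=s_n^{2}$ gives $\log M(r,f)=o(r^{\mu}\log r)$ on a sequence of radii. To pass from a sequence to all large $r$, I would use the annuli $A_n$ together with the gaps between them: from Step~2 these gaps are small in the logarithmic scale, and $\log M$ is log-convex (Hadamard's three circles theorem), so the bound on the radii controls $\log M(r,f)$ for all large $r$. This yields $\rho(f)\leq\mu<\rho(f)$, a contradiction. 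The case with finitely many poles differs only by a bounded term $O(\log r)$ in Step~3.

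\textbf{Main obstacle.} The hardest step is Step~3 together with the interpolation across the gaps in Step~4, that is, controlling $M(r,f)$ uniformly for \emph{all} large $r$ and not only on the annuli. I would try first to show that the Julia components between $U_n$ and $U_{n+1}$ are thin in $\log$-scale, via the covering property and the relation $f(U_n)\subset U_{n+1}$, and then apply three-circles convexity.
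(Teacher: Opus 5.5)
Your contradiction hypothesis is never used after Step~1. (Step~1 also misapplies Riemann--Hurwitz: an unbranched covering of degree $d$ gives $c(U_n)-2=d\,(c(U_{n+1})-2)$, not $c(U_n)=c(U_{n+1})$.) Steps~2--4 use only that $U$ is a BWD and $\lambda(f)<\rho(f)$. So what you are really trying to prove is that such an $f$ has \emph{no} BWD at all. That is the right target: for entire $f$ it is Theorem~\ref{Cao-Wang-NoBWD} with $P=0$, so the statement is vacuous there. But the growth argument then has to be complete, and it is not.

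Step~2 is unsupported. Theorem~\ref{increasing-annulus} gives only $\liminf_n \log R_n/\log r_n>1$, and only for entire $f$. With finitely many poles, only $R_n/r_n\to\infty$ is available (Theorem~\ref{Thm15}). Nothing yields $\log t_n/\log s_n\to\infty$.

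Fortunately Step~3 does not need Step~2. Let $\tau_n$ be the middle radius of a round annulus in $U_n$ whose modulus tends to $\infty$. The hyperbolic length of $\{|z|=\tau_n\}$ in $U_n$ then tends to $0$. Since $|f|>1$ on $U_n$ for large $n$, $\log|f|$ is positive harmonic there, and Harnack's inequality in hyperbolic form gives $\max_{|z|=\tau_n}\log|f|\le(1+o(1))\min_{|z|=\tau_n}\log|f|$. This positivity is what Harnack needs, not any ``thinness'' of $U_{n+1}$; indeed $U_{n+1}$ itself contains thick annuli.

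The additive constant in Step~3 is not $O(\log s_n)$ in general. It contains $-\sum_{|a_j|<s_n}\log|a_j|$, which can be of size $N_n\log s_n$. The correct upper bound comes from Jensen's formula: the circle mean of $\log|f|$ is $N(\tau_n,0)+O(\log\tau_n)$, where $N(r,0)$ is the integrated counting function of zeros. Together these give $\log M(\tau_n,f)\le(1+o(1))N(\tau_n,0)+O(\log\tau_n)=O(\tau_n^{\mu}\log\tau_n)$ for every $\mu>\lambda(f)$.

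The genuine gap is Step~4, as you suspected. The order is a $\limsup$ over \emph{all} large $r$. For $\tau_n<r<\tau_{n+1}$, monotonicity and three-circles convexity give only $\log M(r,f)\le\log M(\tau_{n+1},f)=O(\tau_{n+1}^{\mu}\log\tau_{n+1})$. This bounds $\rho(f)$ only if $\log\tau_{n+1}/\log\tau_n$ stays bounded, and even then only by a multiple of $\mu$. Nothing in Steps~2--3 controls the separation between $U_n$ and $U_{n+1}$: Step~2 concerns the thickness of the annuli, not the width of the gaps, and you give no argument for it.

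The missing idea is to avoid interpolation altogether and use Hadamard factorisation.
\begin{itemize}
\item Write $Qf=z^k\Pi e^{h}$, where $Q$ is a polynomial cancelling the finitely many poles and $\Pi$ is the canonical product over the zeros, which has order $\lambda$.
\item With $T$ the Nevanlinna characteristic, $T(\tau_n,e^{h})\le T(\tau_n,Qf)+T(\tau_n,z^k\Pi)+O(1)=O(\tau_n^{\mu}\log\tau_n)$.
\item For $h=\sum a_jz^j$ and $j\ge1$, the estimate $|a_j|r^j\le 4T(r,e^{h})+2|\mathrm{Re}\,h(0)|$, applied along $r=\tau_n\to\infty$, forces $a_j=0$ for $j>\mu$.
\item Since $\mu>\lambda$ is arbitrary, $h$ is a polynomial of degree at most $\lambda$. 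Hence $\rho(f)\le\max\{\lambda,\deg h\}=\lambda$, contradicting $\lambda<\rho$.
\end{itemize}
This uses only a sequence of radii tending to $\infty$, so the gaps never enter. It shows that $f$ has no BWD, and the stated conclusion then holds vacuously.
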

	The above theorem guarantees the presence of a singular value  in a BWD, which is crucial in determining its eventual connectivity. One may ask the following.
	\begin{Question}
Does Theorem~\ref{sing-BWD} hold for meromorphic functions with infinitely many poles?
	\end{Question}
A  result by Bergweiler et al. published in 2013 completely determines the connectivity of a BWD $U$ in terms of the number of critical points contained in its forward orbit $\cup_{n>1} U_n$. Recall that $c(V)$ denotes the connectivity of the Fatou component $V$. 
	\begin{theorem}(Theorem 1.7, \cite{BergRippStall 2013})
		Let $f$ be an entire function having a BWD $U$. Then 
		\begin{enumerate}
			\item $c(U)=2$ if and only if $\cup_{n=1}^{\infty}U_n$ contains no critical point of $f$;
			\item $2<c(U)<\infty$ if and only if the number of critical points of $f$ in $\cup_{n=1}^{\infty}U_n$ is non-zero and finite;
			\item $U$ is infinitely connected if and only if $\cup_{n=1}^{\infty}U_n$ contains infinitely many critical points of $f$.
		\end{enumerate}
		\label{connectivity-criticalpoints}
	\end{theorem}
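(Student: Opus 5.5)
Since the three alternatives for $c(U)$ ($=2$, finite and $>2$, infinite) are mutually exclusive, as are the three alternatives for the number of critical points in $\cup_{n\geq1}U_n$ (none, finitely many and at least one, infinitely many), it suffices to prove the three "if" directions; the "only if" parts then follow automatically. The basic tool is the Riemann--Hurwitz formula for proper maps. By Theorem~\ref{Thm1} every $U_n$ is bounded, so $f:U_n\to U_{n+1}$ is a proper map of some finite degree $d_n$. The formula $c(U_n)-2=d_n(c(U_{n+1})-2)+N_n$, where $N_n$ is the number of critical points of $f$ in $U_n$ counted with multiplicity, holds whenever the connectivities are finite. In the infinite case one uses the exhaustion version: if $N_n=\infty$ or $c(U_{n+1})=\infty$, then $c(U_n)=\infty$.

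The key step is a result on eventual connectivity: for all sufficiently large $n$, $c(U_n)=2$, and more precisely $U_n$ contains a large round annulus $A(r_n,R_n)$ with $\log R_n/\log r_n\to\infty$ which carries all of its modulus. I would prove this first, using the rigidity of $f^n$ on a BWD. Take a closed curve $\gamma\subset U$ as in Theorem~\ref{Thm1}. Harnack's inequality applied to the harmonic functions $\log|f^n|$ shows that $\log|f^n(z)|/\log|f^n(z_0)|$ is pinched close to $1$ uniformly on compacta of $U$. Hence $f^n(\gamma)$ lies in thin annuli $A(r_n^{1-\epsilon},r_n^{1+\epsilon})$. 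Using the hyperbolic metric, the whole of $U_n$ minus small neighbourhoods of complementary components therefore lies near such an annulus. This gives that, for large $n$, $f$ behaves on $U_n$ like $z\mapsto az^{d_n}$, and that any critical point in $U_n$ would force the image annuli to pinch. I expect this to be the main obstacle, since it needs careful estimates of the Bergweiler--Rippon--Stallard type. First I would try to show directly that $U_n$ contains no critical points for large $n$ and that the complementary bounded component of $U_{n+1}$ is filled by the image. Combined with Riemann--Hurwitz, this gives $c(U_n)=2$ for all large $n$.

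Granting this, the three implications follow by backward induction along the orbit. Fix $m$ with $c(U_n)=2$ for all $n\geq m$. For $n<m$, the formula gives $c(U_n)-2=d_n(c(U_{n+1})-2)+N_n$. If there are no critical points in $\cup U_n$, then every $N_n=0$, so each $c(U_n)=2$ inductively down to $U$. If the number of critical points is finite and positive, then all $c(U_n)$ are finite, and at the first index $n$ (going downward) with $N_n>0$ we get $c(U_n)>2$. Since $d_k\geq1$, this propagates down to give $c(U)>2$. If there are infinitely many critical points, they cannot lie in the finitely many $U_n$ with $n\geq m$ only if the eventual step shows $N_n=0$ there. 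So some $U_n$ with $n<m$ contains infinitely many critical points, hence $c(U_n)=\infty$, and this propagates backward to $U$ because $c(U_{n+1})=\infty$ forces $c(U_n)=\infty$. Here the indexing $\cup_{n\geq1}$ versus $U=U_0$ needs a small check: critical points in $U_0$ itself must also be accounted for, by including $n=0$.
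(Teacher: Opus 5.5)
Your reduction to the three \emph{if} directions is valid. You are also right that the union must start at $n=0$: if the only critical points of the orbit lie in $U$ itself, then once $c(U_1)=2$ Riemann--Hurwitz gives $c(U)=2+N_0>2$, so item~1 as printed fails. The overall plan (Riemann--Hurwitz along the orbit plus an eventual-connectivity input) is what the paper indicates.

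However, your key step is false. You assert, unconditionally, that $U_n$ contains no critical points and $c(U_n)=2$ for all large $n$. Example~\ref{2nd exm} refutes this. There $\phi$ has a zero in $(-s_n,-r_n^2)\subset A_n\subset U_n$ for every large $n$, so every $U_n$ contains a critical point, and these BWDs are infinitely connected (Corollary~\ref{SecondExample-infinitelyconnected}). The correct statement (Theorem~\ref{connectivity}) is that the eventual connectivity is $2$ \emph{or} $\infty$, and both occur. In particular, your heuristic that a critical point in $U_n$ forces the image annuli to pinch is wrong.

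Your lemma would even make item~3 vacuous. Each $U_n$ is bounded, so $\overline{U_n}$ contains only finitely many zeros of $f'$. If $N_n=0$ for $n\ge m$, the whole orbit would carry only finitely many critical points. This is why your case~3 collapses: a $U_n$ with $n<m$ containing infinitely many critical points cannot exist.

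What is needed instead is the following. The direction \emph{infinitely many critical points implies $c(U)=\infty$} needs no eventual-connectivity result at all. If $c(U)<\infty$, then every $c(U_n)\le c(U)$ is finite, and iterating Riemann--Hurwitz gives
\[
c(U)-2 = D_n\bigl(c(U_n)-2\bigr)+\sum_{k=0}^{n-1} D_k N_k \ \ge\ \sum_{k=0}^{n-1} N_k, \qquad D_k=d_0d_1\cdots d_{k-1},\ D_0=1.
\]
So at most $c(U)-2$ critical points (with multiplicity) lie in $\bigcup_{n\ge0}U_n$.

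For items~1 and~2, the genuinely nontrivial input is a \emph{conditional} statement: if $U_n$ contains no critical points for all $n\ge N$, then $c(U_n)=2$ for $n\ge N$. Riemann--Hurwitz alone handles the finite-connectivity case: an eventually constant finite value $c$ satisfies $c-2=d_n(c-2)$ with $d_n\ge 2$ for large $n$, so $c=2$. What Riemann--Hurwitz cannot do is rule out an infinitely connected orbit on which the maps $f:U_n\to U_{n+1}$ are eventually unbranched finite coverings. That is where genuinely geometric input is required, such as the absorbing-annuli and hyperbolic-metric estimates of Bergweiler--Rippon--Stallard (Theorems~\ref{increasing-annulus} and~\ref{Thm 2.14}). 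Once this conditional lemma replaces your unconditional one, your backward induction for items~1 and~2 goes through as written.
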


	The main ingredient of the proof of Theorem~\ref{connectivity-criticalpoints} is the Reimann-Hurwitz formula. Though it seems reasonable to believe that this theorem is true in the presence of poles, the immediate next  question should be the following.
	\begin{Question}
		Is Theorem~\ref{connectivity-criticalpoints} true for meromorphic functions with finitely many poles?
	\end{Question}
	There is a corollary to Theorem~\ref{connectivity-criticalpoints}.
		\begin{corollary} The BWDs discussed in Example~\ref{2nd exm} are infinitely connected.
		\label{SecondExample-infinitelyconnected}
	\end{corollary}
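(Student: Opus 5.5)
We apply part (3) of Theorem~\ref{connectivity-criticalpoints} to the function $g$ of Example~\ref{2nd exm}. It then suffices to show that, for a BWD $U$ of $g$ obtained there, the forward orbit $\cup_{n\geq 1}U_n$ contains infinitely many critical points of $g$.

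First, the BWDs are all in one grand orbit. In the proof of Example~\ref{2nd exm}, for all $n\geq N$ (with $N$ large), the annulus $A_n=\{z: r_n^2<|z|<s_n\}$ lies in a BWD, and $g(A_n)\subset A_{n+1}$. Let $U$ be the Fatou component containing $A_N$. Since $g(A_N)\subset A_{N+1}$, the component $U_1$ containing $g(U)$ also contains $A_{N+1}$. Inductively, $U_k$ is the Fatou component containing $A_{N+k}$ for every $k\geq 0$. Thus every BWD containing some $A_n$ with $n\geq N$ is one of the $U_k$.

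Second, each such domain contains a critical point. The proof of Example~\ref{2nd exm} shows, for every sufficiently large $n$, that $\phi(-s_n)>0$ and $\phi(-r_n^2)<0$. Here $\phi(z)=\sum_i \frac{1}{z+r_i}$ is real-valued and continuous on $I_n=(-r_{n+1},-r_n)$, and $[-s_n,-r_n^2]\subset I_n$. By the intermediate value theorem, $\phi$ has a zero $c_n\in(-s_n,-r_n^2)$, so $|c_n|\in(r_n^2,s_n)$ and $c_n\in A_n$.

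Next I would check that $c_n$ is a genuine critical point of $g$. From $\tfrac12 g'(z)=g(z)\phi(z)$, valid wherever $g\neq 0$, and since $g(c_n)\neq0$ (the zeros of $g$ are the $-r_i$, none of which lie in $A_n$), we get $g'(c_n)=0$. The points $c_n$ are pairwise distinct because the annuli $A_n$ are pairwise disjoint. Hence for every $k\geq 1$ the domain $U_k\supset A_{N+k}$ contains the critical point $c_{N+k}$. The domains $U_k$ are pairwise disjoint, since $U$ is wandering. So $\cup_{k\geq1}U_k$ contains infinitely many critical points, and Theorem~\ref{connectivity-criticalpoints}(3) gives that $U$ is infinitely connected.

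Third, this passes to every BWD in the example. Each BWD containing some $A_n$ is of the form $U_m$, and the forward orbit of $U_m$ is $\{U_{m+k}\}_{k\geq 1}$. This orbit still contains infinitely many of the $c_{N+k}$, so the same theorem applies to $U_m$.

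The only step needing care is the bookkeeping: the inequalities $\phi(-s_n)>0$ and $\phi(-r_n^2)<0$, together with $g(A_n)\subset A_{n+1}$, must hold simultaneously for all $n\geq N$ with a single threshold $N$. I would take $N$ to be the maximum of the finitely many thresholds appearing in the proof of Example~\ref{2nd exm}. Otherwise the argument is a direct application of Theorem~\ref{connectivity-criticalpoints}.
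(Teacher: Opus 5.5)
Your proposal is correct and follows essentially the paper's route. The paper obtains this corollary by combining two ingredients: the sign change of $\phi$ on $(-s_n,-r_n^2)$ from Example~\ref{2nd exm}, which places a critical point of $g$ in each annulus $A_n$ and hence in each BWD of the forward orbit, and part (3) of Theorem~\ref{connectivity-criticalpoints}; your extra bookkeeping (the components containing the $A_n$ form one forward orbit, $g(c_n)\neq 0$, a common threshold $N$) just makes explicit what the paper leaves implicit.
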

	A nice and interesting situation would be to have all the critical points in the forward orbit of a BWD. Nice because all the critical points are escaping and interesting because the function will be of unbounded type.  
	\begin{Question}
	Does there exist an entire function with an infinitely connected BWD $U$ such that each critical point of $f$ is contained in $U_k$ for some $k$?
	\end{Question}
	A point $a\in\mathbb{\widehat{C}}$ is said to be an omitted value of a meromorphic function   $f: \mathbb{C} \to \widehat{\mathbb{C}}$ if $f(z)\neq a$ for any $z\in\mathbb{C}$. In 1914, Iversen \cite{Iversen 1914} proved that every omitted value is an asymptotic value. For every neighborhood $D$ of an omitted value $a$,  every component
	$C$ of $f^{-1}(D)$  is unbounded (see Lemma 2.1, \cite{Nayak 2015}). Every singularity lying over an omitted value is transcendental.
	\par Since $\infty$ is an omitted of every (transcendental) entire function $f$,  the pre-image $f^{-1}(D)$ of each neighborhood $D$ of $\infty$ is unbounded. But  the set $f^{-1}(D)$ is not connected in general. However, if $f$ has a BWD then $f^{-1}(D)$ is not only connected and unbounded but also satisfies a significant property, namely $f^{-1}(D)$ is infinitely connected in such a way that each of its boundary components is bounded. This was first observed by  Chakra et al, \cite{ChakraChakrabortyNayak 2016} in 2016, who introduced the notion of a Baker omitted value. This also makes sense for meromorphic functions with poles.
	\begin{definition}[Baker omitted value]
		An omitted value $a\in\mathbb{\widehat{C}}$ of a meromorphic $f$ is said to be a Baker omitted value (in short, bov) if there is a disk $D$ with centre at $a$ such that each component of the boundary of $f^{-1}(D)$ is bounded. 
	\end{definition} 
	If each boundary component of $f^{-1}(D)$ is bounded in the above definition then it can in deed be shown that $f^{-1}(D)$ is connected with infinitely many complementary components. Further, this is true for every disk (in fact every simply connected domain) around a bov (see Lemma 2.3, \cite{ChakraChakrabortyNayak 2016}).
	
	It is known that if a function $f$ has a bov then it is the only asymptotic value of $f$ (see Theorem 2.1 in \cite{ChakraChakrabortyNayak 2016}). If $f$ is an entire function, then $\infty$ is an asymptotic value and is the only candidate for bov. However, a pole makes the situation very different.
	\begin{Remark}
If  a meromorphic function with at least one pole has 	  bov then the bov is  a finite complex number. This is because $\infty$ is not omitted in this case.
		 \label{BOV-mero}
	\end{Remark}  The following result of Chakra et al. underlines the connection between a bov and BWD. 
	\begin{theorem}(Theorem 2.3, \cite{ChakraChakrabortyNayak 2016})
		If an entire function has a BWD, then it has a Baker omitted value and that is $\infty$.
		\label{BOV-BWD}
	\end{theorem}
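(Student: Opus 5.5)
We have to show that if an entire $f$ has a BWD $U$, then $\infty$ is a Baker omitted value. It is already omitted, since $f$ is entire. So we need a disk $D$ centred at $\infty$ such that every component of $\partial f^{-1}(D)$ is bounded. The plan is to take $D=\{w:|w|>R\}\cup\{\infty\}$ with $R$ chosen from the BWD, and to use Theorem~\ref{Thm1}, which supplies Jordan curves $\gamma_n\subset U_n$ surrounding $0$ with $\mathrm{dist}(\gamma_n,0)\to\infty$. We also use the fact that $f$ maps $U_n$ into $U_{n+1}$, where $U_{n+1}$ surrounds $U_n$ and $0$.

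The first step is to fix a large $N$ and set $R=\max_{|z|\le \rho}|f(z)|$ for a suitable $\rho$. The aim is to arrange that the sets $\{|f|>R\}$ and $\{|f|<R\}$ interleave with the annular regions between the curves. Concretely, I would pick $R$ so that the level set $\{|f|=R\}$ is trapped between two consecutive curves. The key estimate is that on $\gamma_n$, for large $n$, we have $\min_{\gamma_n}|f|\ge \mathrm{dist}(\gamma_{n+1},0)$, since $f(\gamma_n)\subset U_{n+1}$ and $U_{n+1}$ surrounds $0$. In particular $|f|$ is large on all the $\gamma_n$ with $n\ge N$.

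The second step is to show that each component of $\{|f|\le R\}$ is bounded, which is equivalent to $\partial f^{-1}(D)$ having only bounded components. Let $K$ be a component of $\{z:|f(z)|\le R\}$. For $n\ge N$ we have $|f|>R$ on $\gamma_n$, so $K$ cannot cross $\gamma_n$. Since the $\gamma_n$ surround $0$ and escape to $\infty$, $K$ lies in one bounded complementary region, namely either inside $\gamma_N$ or in the annular region between $\gamma_n$ and $\gamma_{n+1}$. Hence $K$ is bounded, and so every boundary component of $f^{-1}(D)$, being contained in $\{|f|=R\}$, is bounded.

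Then $f^{-1}(D)$ contains every $\gamma_n$ with $n\ge N$. Each of these curves lies in one component, and connectedness follows as in Lemma 2.3 of \cite{ChakraChakrabortyNayak 2016}, though only the boundedness of boundary components is actually needed for the definition.

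The main obstacle is the uniform lower bound of $|f|$ on $\gamma_n$ by a single $R$ for all $n\ge N$. This requires $\min_{\gamma_n}|f|$ to be bounded below independently of $n$, and that follows from the escape property $f^n\to\infty$ on $U$ together with the surrounding property. I would first try to obtain it via $f(\gamma_n)\subset U_{n+1}$, where $U_{n+1}$ avoids $\{|w|\le \mathrm{dist}(U_{n+1},0)\}$ and this distance tends to $\infty$. Then $\inf_{n\ge N}\min_{\gamma_n}|f|\ge \inf_{n\ge N}\mathrm{dist}(U_{n+1},0)>0$, and since it tends to infinity it is large. Choosing $R$ below this infimum gives the claim.
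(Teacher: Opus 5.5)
Your proof is correct and follows essentially the paper's route. The curves $\gamma_n\subset U_n$, on which $|f|\ge \mathrm{dist}(U_{n+1},0)\to\infty$, act as barriers that no connected subset of $\{|f|\le R\}$ can cross; this is the same mechanism as Proposition~\ref{unbounded-BWD}, which the paper uses to rule out unbounded boundary components of $f^{-1}(D)$, since such a component would meet $U_n$ for all large $n$ and so could not map into the bounded set $\partial D$.

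Two small corrections. First, your key estimate should read $\min_{\gamma_n}|f|\ge \mathrm{dist}(U_{n+1},0)$, not $\mathrm{dist}(\gamma_{n+1},0)$. Second, because this bound tends to $\infty$, any $R>0$ works once $N$ is large, so the choice $R=\max_{|z|\le\rho}|f(z)|$ and the idea of trapping the whole level set $\{|f|=R\}$ between two consecutive curves are unnecessary; only each component of $\{|f|\le R\}$ gets trapped, which is exactly what your second step correctly proves.
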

 The converse of Theorem~\ref{BOV-BWD} is not true in general.
		Several examples of entire functions with a Baker omitted value are known for which there is no BWD. Examples of such functions include $e^z +z +\lambda, \lambda \geq 0$ \cite{ChakraChakrabortyNayak 2016}. Later, it was seen that $e^z +P(z)$ is such a function for every non-constant polynomial \cite{DasNayak 2024}. This leads to the following question.
		\begin{Question}
			Find a sufficient condition for an entire function with Baker omitted value to have a BWD.
			\end{Question}

	Though Theorem~\ref{BOV-BWD} does not hold for meromorphic functions, something in the same spirit remains true. To state this precisely, we make a definition.
	\begin{definition}[Local Baker omitted value at $\infty$]
		A meromorphic function $f$ has a local bov at $\infty$  if, for a disk $D$ around $\infty$, $f^{-1}(D)$ has exactly one unbounded component  and each component of the boundary $\partial C$ of $C$ is bounded.
	\end{definition}
Here $f^{-1}(D)$ can be disconnected unlike in the case of bov, but every other components of $f^{-1}(D)$ (necessarily bounded) contains at least one pole.  
An entire function has a local bov then  $\infty$ is actually the bov.
Here is a remark.
	\begin{Remark}
	For a meromorphic function $f$, there is a non-logarithmic singularity of $f^{-1}$ over $\infty$ whenever $\infty$ is a local bov.
\end{Remark}

 A simple but useful observation follows.
	\begin{prop}
		If $f$ is meromorphic with BWD then the image of every unbounded curve under $f$ is unbounded.  In particular, there is no BWD if the function  has a finite asymptotic value.
		\label{unbounded-BWD}
	\end{prop}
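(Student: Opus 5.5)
Let $U$ be a BWD of $f$. By definition there is $N$ such that for every $n\ge N$ the component $U_n$ is bounded, multiply connected and surrounds $0$, and $\mathrm{dist}(U_n,0)\to\infty$. The plan is to use the BWDs as barriers: any unbounded curve must cross all the $U_n$, and its image must then cross all the $U_{n+1}$.

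\textbf{Step 1: a nested sequence of closed curves.} For each $n\ge N$, $U_n$ is a bounded domain surrounding $0$, so it contains a closed curve $\gamma_n$ that winds around $0$ (by definition of surrounding, or Theorem~\ref{Thm1} in the entire case). We have $\min_{z\in\gamma_n}|z|\ge \mathrm{dist}(U_n,0)\to\infty$.

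\textbf{Step 2: an unbounded curve meets every $\gamma_n$.} Let $\Gamma:[0,\infty)\to\mathbb C$ be continuous with $\limsup_{t\to\infty}|\Gamma(t)|=\infty$. Fix $n$ so large that $\Gamma(0)$ lies in the bounded component of $\widehat{\mathbb C}\setminus\gamma_n$ containing $0$; this is possible because $\mathrm{dist}(\gamma_n,0)\to\infty$. Since $\gamma_n$ is compact and $\Gamma$ leaves every compact set, $\Gamma$ reaches the unbounded complementary component, so it meets $\gamma_n\subset U_n$. Thus $\Gamma$ meets $U_n$ for all large $n$.

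\textbf{Step 3: the image meets $U_{n+1}$.} This is the delicate point, because $f$ might have poles on $\Gamma$. If $\Gamma$ passes through a pole, the image is unbounded trivially (if the curve is in $\mathbb{C}$), so we may assume $f\circ\Gamma$ is finite. For each large $n$ pick $t_n$ with $\Gamma(t_n)\in U_n$. Then $f(\Gamma(t_n))\in f(U_n)\subset U_{n+1}$. Since $\mathrm{dist}(U_{n+1},0)\to\infty$, we get $|f(\Gamma(t_n))|\to\infty$, so $f(\Gamma)$ is unbounded. No argument about the structure of $f$ near $\infty$ is needed; only the forward invariance $f(U_n)\subseteq U_{n+1}$ and the escape of the $U_n$ are used.

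\textbf{Step 4: the consequence for asymptotic values.} If $a\in\mathbb C$ were an asymptotic value, there would be a curve $\gamma(t)\to\infty$ with $f(\gamma(t))\to a$. Then $f(\gamma)$ would be bounded near $t=\infty$, and bounded on compact parameter intervals away from poles. More simply, $f(\gamma(t_n))\to a$ along the $t_n$ from Step 2 with $t_n\to\infty$, which contradicts $|f(\gamma(t_n))|\to\infty$. Hence $f$ has no finite asymptotic value.

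The main obstacle is Step 2 in the meromorphic setting: justifying that each $U_n$ genuinely separates $0$ from $\infty$, i.e. that "surrounds $0$" supplies a separating closed curve inside $U_n$. This follows from $U_n$ being a domain and $0$ lying in a bounded component of its complement: take a Jordan curve in $U_n$ separating that component from $\infty$, which exists by standard plane topology applied to the domain $U_n$.
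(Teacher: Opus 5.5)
Your proof is correct and is essentially the paper's argument: an unbounded curve meets $U_n$ for all large $n$, so its image meets $f(U_n)\subseteq U_{n+1}$, and $\mathrm{dist}(U_{n+1},0)\to\infty$. You make explicit what the paper leaves implicit, namely the closed curves $\gamma_n\subset U_n$ around $0$ that force the crossing, and, in Step 4, that $t_n\to\infty$, which holds because $|\gamma(t_n)|\ge \mathrm{dist}(U_n,0)\to\infty$ while $\gamma$ is bounded on compact parameter intervals.
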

		The proof of Proposition~\ref{unbounded-BWD} is already available for entire functions (see proof of Theorem 2.3, \cite{ChakraChakrabortyNayak 2016}), which  works even when there is a pole.
	\begin{proof}[Proof of Proposition \ref{unbounded-BWD}]
		Let $U$ be a BWD of $f$. It follows from the definition of BWD that,  every unbounded curve $\gamma$  intersects $U_n$ for all large $n $. Since  $U_{n+1}$ surrounds $U_n$, the set $f(\gamma)$ intersects  $U_n$ also for all large $n$. This gives that $f(\gamma)$ is unbounded.
	\end{proof}
		There is an interesting though trivial consequence of Proposition~\ref{unbounded-BWD} and Remark~\ref{BOV-mero}.
	\begin{prop}
		If a meromorphic function with a pole has bov then it has no BWD. 
	\end{prop}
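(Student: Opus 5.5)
The plan is to combine Remark~\ref{BOV-mero} with Proposition~\ref{unbounded-BWD}.

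Let $f$ be meromorphic with at least one pole, and suppose $f$ has a bov $a$. By Remark~\ref{BOV-mero}, $\infty$ is not omitted because $f$ takes the value $\infty$ at its pole. So $a$ must be a finite complex number.

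By Iversen's theorem, quoted just before the definition of bov, every omitted value is an asymptotic value. Hence there is a curve $\gamma:[0,\infty)\to\mathbb{C}$ with $\gamma(t)\to\infty$ and $f(\gamma(t))\to a\in\mathbb{C}$ as $t\to\infty$.

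Now suppose, for contradiction, that $f$ has a BWD. Proposition~\ref{unbounded-BWD} says that $f$ then has no finite asymptotic value, which contradicts the previous step. Therefore $f$ has no BWD.

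One point needs care. The curve $\gamma$ may pass through poles of $f$. To apply Proposition~\ref{unbounded-BWD} in its curve form, I will restrict to a tail $\gamma([t_0,\infty))$ on which $|f(\gamma(t))-a|<1$. On this tail $f$ is finite and $f(\gamma)$ is bounded, while the tail itself is still an unbounded curve. Beyond this there is no real obstacle; the result is the trivial consequence indicated in the paper.
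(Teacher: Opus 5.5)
Your proof is correct and takes the same route as the paper, which presents this as a trivial consequence of Remark~\ref{BOV-mero} (the bov must be finite) and Proposition~\ref{unbounded-BWD} (no finite asymptotic value in the presence of a BWD), with Iversen's theorem supplying that the omitted value is asymptotic. Passing to a tail of the asymptotic curve on which $f$ stays finite is a harmless refinement that the paper leaves implicit.
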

	We now state and prove the generalization of Theorem~\ref{BOV-BWD}.
	\begin{theorem}
		If $f$ is meromorphic with at most finitely many poles and has a BWD then it has a local bov at $\infty$.
		\label{BWD-LOCALBOV}
	\end{theorem}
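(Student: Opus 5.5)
Let $U$ be a BWD of $f$. Since $f$ has only finitely many poles, fix $R_0>0$ with all poles in $\{|z|<R_0\}$. For all large $n$ the component $U_n$ is bounded, multiply connected, surrounds $0$, and ${\rm dist}(U_n,0)\to\infty$. By Theorem~\ref{Thm2.7} and the construction behind Theorem~\ref{Thm1}, for large $n$ we can pick a Jordan curve $\gamma_n\subset U_n$ that winds around $0$ and surrounds $\{|z|\le R_0\}$. In addition, $f(\gamma_n)\subset U_{n+1}$ is a closed curve that also surrounds $\{|z|\le R_0\}$, with ${\rm dist}(f(\gamma_n),0)\to\infty$.

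Fix such an $N$ and let $D$ be the component of $\widehat{\mathbb C}\setminus \overline{f(\gamma_N)}$ that contains $\infty$. This is a simply connected neighbourhood of $\infty$. The local bov condition is stated for a disk around $\infty$, so we choose a disk $D'=\{|w|>\rho\}\cup\{\infty\}$ with $D'\subset D$ and $\rho$ larger than the radius of the closed region bounded by $f(\gamma_N)$. The plan has two steps: first show that $f^{-1}(D')$ has exactly one unbounded component $C$, then show that every boundary component of $C$ is bounded.

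\emph{Step 1 (unbounded components).} Every unbounded component $V$ of $f^{-1}(D')$ contains an unbounded curve. By Proposition~\ref{unbounded-BWD} its image is unbounded, and the proof of that proposition shows that $V$ meets $U_n$, and hence $\gamma_n$, for all large $n$. The key observation is that $\gamma_n\subset f^{-1}(D')$ for large $n$: indeed $f(\gamma_n)\subset U_{n+1}$ lies far from $0$, hence inside $D'$. Each such $\gamma_n$ is connected, so it lies in a single component of $f^{-1}(D')$. This component contains $\gamma_n$ for every large $n$, because the curves are nested and $V$ meets them all. Hence there is exactly one unbounded component $C$, namely the one containing all $\gamma_n$ with $n\ge n_0$.

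\emph{Step 2 (bounded boundary components).} This is the main obstacle. Let $K$ be a component of $\partial C$. Then $f(K)\subset\partial D'=\{|w|=\rho\}$, because $\partial C$ maps into $\partial D'$ apart from possible points over $\infty$, and these (poles) lie in the interior of $f^{-1}(D')$. Suppose $K$ were unbounded. Then $K$ is a closed connected unbounded set, so it meets every closed curve $\gamma_n$ that winds around $0$, in particular for all large $n$. Such intersection points satisfy $|f(z)|=\rho$. On the other hand $\gamma_n\subset U_n$ and $f(\gamma_n)\subset U_{n+1}$, which is at distance greater than $\rho$ from $0$ for large $n$. This is a contradiction, so $K$ is bounded. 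The delicate points are to work with $\partial C$ inside $\mathbb C$, treating $\infty$ separately, and to make sure $C$ is well defined as a subset of $\mathbb C$. The connectedness argument "$K$ unbounded and connected implies $K$ meets $\gamma_n$" uses that $\gamma_n$ separates $0$ from $\infty$, together with the fact that $K$ contains points inside $\gamma_n$. That second fact can be arranged by noting $K\subset \widehat{\mathbb C}\setminus C$ while $\gamma_n\subset C$, and then choosing $n$ so that $\gamma_n$ surrounds some point of $K$.

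Combining Steps 1 and 2, $f^{-1}(D')$ has exactly one unbounded component and all components of its boundary are bounded, i.e.\ $f$ has a local bov at $\infty$. In the entire case this recovers Theorem~\ref{BOV-BWD}.
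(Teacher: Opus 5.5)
\textbf{Gap: existence of an unbounded component.} Your Step~1 proves only that $f^{-1}(D')$ has \emph{at most} one unbounded component. If $V$ is unbounded, then for large $n$ it contains a point surrounded by $\gamma_n$ and a point outside $\gamma_n$, so it meets $\gamma_n$. Since $\gamma_n\subset f^{-1}(D')$ is connected, this gives $\gamma_n\subset V$, so two unbounded components would intersect.

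The phrase ``namely the one containing all $\gamma_n$ with $n\ge n_0$'' presupposes that some component contains infinitely many of the $\gamma_n$. Nothing you wrote excludes the possibility that each $\gamma_n$ lies in its own \emph{bounded} component of $f^{-1}(D')$, which would then contain $\gamma_n$ together with a pole. Note also that the hypothesis that $f$ has only finitely many poles is never used ($R_0$ plays no role). This is exactly the step that needs it, and the paper leaves the infinitely-many-poles case open.

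The fix is the paper's argument. If $W$ is a bounded component of $f^{-1}(D')$, then $|f|=\rho$ on $\partial W$ (no pole can lie on $\partial W$) and $|f|>\rho$ in $W$. By the maximum modulus principle, $W$ must therefore contain a pole. Components are disjoint and there are finitely many poles, so there are only finitely many bounded components and their union is bounded. On the other hand $f^{-1}(D')\supset\gamma_n$ for all large $n$ is unbounded, so an unbounded component exists.

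\textbf{Comparison with the paper, once the gap is filled.} Your proof is then correct and close in spirit to the paper's.
\begin{itemize}
\item For uniqueness, the paper argues that two unbounded components force an unbounded boundary component whose image lies in $\partial D$, contradicting Proposition~\ref{unbounded-BWD}. You instead show that every unbounded component absorbs all $\gamma_n$ for large $n$.
\item Your Step~2 compares $|f|=\rho$ on $\partial C$ with $|f|>\rho$ on the separating curves $\gamma_n$. This is more careful than the paper's bare appeal to Proposition~\ref{unbounded-BWD}, since a boundary component is a continuum and need not be a curve.
\end{itemize}

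\textbf{Minor points.}
\begin{itemize}
\item The curves $\gamma_n$ come directly from the definition of a BWD, because $U_n$ surrounds $0$ and ${\rm dist}(U_n,0)\to\infty$. Theorem~\ref{Thm2.7} goes in the opposite direction and is not needed.
\item Your claim that $f(\gamma_n)$ surrounds $\{|z|\le R_0\}$ is unproved: for meromorphic $f$ its winding number about $0$ is the number of zeros minus the number of poles inside $\gamma_n$. It is also unused. The detour through $D$ and $f(\gamma_N)$ can be dropped, since any $\rho>0$ works.
\end{itemize}
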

	\begin{proof} Let $D$ be a disk around $\infty$. Then $f^{-1}(D)$ is clearly unbounded (by Picard's theorem). 
		\par
		
		Since each bounded component of $f^{-1}(D)$ has to contain a pole and the number of poles is finite, there is at least one unbounded component of $f^{-1}(D)$. 
		
		\par	
		If the number of unbounded components of $f^{-1}(D)$ is at least two  then one of them has an unbounded boundary component. Let this boundary component be denoted by $\gamma$. This $\gamma$ is mapped into the boundary of $D$,  by $f$. The boundary of $D$ is a bounded set. On the other hand, its image $f(\gamma)$ is unbounded by Proposition~\ref{unbounded-BWD}. This is a contradiction.
		Therefore, the set $f^{-1}(D)$ has a unique  unbounded component, say $C$. Now, using Proposition~\ref{unbounded-BWD} again, we conclude that every component of the boundary of $C$ is  bounded.
	\end{proof} 

Following question stems from Theorem~\ref{BWD-LOCALBOV}.
\begin{Question}
	Is it always true that for a meromorphic function with infinitely many poles and BWD, the point $\infty$ is a local bov?
\end{Question}
	An analysis of Example~\ref{BWD-infinitelymanypoles} may be useful to answer the above question. 
 	\subsection{Julia components}
 	As discussed in the introduction, in spite of being of unbounded type, meromorphic functions with BWD are with inherent advantages as long as their dynamics is concerned. One such is the following.	
 	\begin{prop}
 		If $f$ is a meromorphic function with BWD then every point in the backward orbit of $\infty$, i.e., $\{z \in \widehat{\mathbb{C}}: f^m (z)=\infty~\mbox{for some}~m \geq 1\}$ is a singleton and buried component of the Julia set.
 \label{pre-poles}
 	\end{prop}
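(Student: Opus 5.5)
Let $U$ be a BWD of $f$. The plan has two parts. First treat the point $\infty$ itself. Then transfer the conclusion to every point $z_0$ with $f^m(z_0)=\infty$, using the local behaviour of $f^m$ near such a point. Note that $\infty\in\mathcal J(f)$ and so is its whole backward orbit, as recalled in the introduction.

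\emph{Step 1: $\infty$ is a singleton buried Julia component.} By the definition of BWD, for all large $n$ the domain $U_n$ is bounded, multiply connected and surrounds $0$, with $\mathrm{dist}(U_n,0)\to\infty$. So $U_n$ contains a Jordan curve $\gamma_n$ winding around $0$ with $\mathrm{dist}(\gamma_n,0)\to\infty$. Each $\gamma_n$ lies in the Fatou set and separates $\infty$ from $0$. Let $D_n$ be the component of $\widehat{\mathbb C}\setminus\gamma_n$ containing $\infty$. These $D_n$ are neighbourhoods of $\infty$ whose spherical diameters tend to $0$.

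The Julia component $K$ containing $\infty$ cannot meet $\gamma_n$, so $K\subset D_n$ for every large $n$. Hence $K=\{\infty\}$.

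For buriedness, suppose $\infty\in\partial V$ for some Fatou component $V$. Then $V$ meets $D_n$ for all large $n$. Since $V$ is connected and disjoint from the other Fatou components, this gives either $V=U_n$-type components for infinitely many distinct $n$, which is impossible since the $U_n$ are distinct, or $V\subset D_n$ for all large $n$. The latter is also impossible because $\bigcap D_n=\{\infty\}$. Said more simply, every Fatou component is bounded: $V$ cannot cross $\gamma_n$, so if $V$ meets the bounded side of $\gamma_N$ for some $N$ it stays there. Thus $V$ is bounded, as stated in the introduction, and $\infty\notin\partial V$.

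\emph{Step 2: preimages.} Let $f^m(z_0)=\infty$ with $z_0\in\mathbb C$ and $m\ge1$ minimal. Then $f^{m-1}(z_0)$ is a pole and $f^j(z_0)\ne\infty$ for $j<m$. Hence $f^m$ is meromorphic in a neighbourhood $N$ of $z_0$, with a pole at $z_0$.

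Choose $N$ small so that $f^m(N)\subset D_n$ for some large $n$. Pulling back, the component $N'$ of $f^{-m}(D_{n'})$ containing $z_0$ is a small topological disk around $z_0$ for large $n'$. Its boundary lies in $f^{-m}(\gamma_{n'})$, which is in the Fatou set by complete invariance. As $n'\to\infty$ these disks shrink to $z_0$, since $f^m$ is a proper branched covering near its pole. So the Julia component containing $z_0$ is contained in arbitrarily small neighbourhoods of $z_0$, and is therefore $\{z_0\}$.

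For buriedness, suppose a Fatou component $V$ has $z_0\in\partial V$. Then $V$ meets each such $N'$. Its image under $f^m$ lies in a Fatou component $V_m$ that meets $D_{n'}$ for all large $n'$. By Step 1, $V_m$ is bounded and cannot cross the curves $\gamma_{n'}$, so this is impossible. Alternatively, continuity gives $\infty\in\partial V_m$, contradicting Step 1.

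\emph{Main obstacle.} The delicate point is the local pull-back in Step 2: showing that the boundary of the component $N'$ of $f^{-m}(D_{n'})$ is a Fatou curve surrounding $z_0$ and shrinking to it. The route I would take is to use that $f^m$ near $z_0$ behaves like $c(z-z_0)^{-k}$. Hence the preimage of the complement of a large disk is a small punctured neighbourhood, and the preimage of $\gamma_{n'}$ contains a closed curve around $z_0$. Properness of $f^m$ on a neighbourhood of the pole makes this rigorous.
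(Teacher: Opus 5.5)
Your proof is correct and follows essentially the same route as the paper. Jordan curves $\gamma_n$ in the $U_n$ shrink onto $\infty$, and pulling them back by $f^m$ near a pre-pole $z_0$ gives Fatou curves surrounding $z_0$ and shrinking to it (the boundaries of your $N'$, the paper's $\alpha_n$). Your choice of $m$ minimal with $f^m(z_0)=\infty$, the properness argument for the shrinking, and the buriedness step (pushing $V$ forward to $V_m$ and invoking Step 1) only make explicit details that the paper's proof leaves terse.
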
 
 	\begin{proof}
 		It follows from the definition of BWD that  the Julia component containing $\infty$ is singleton and buried. Let $z$ be such that $f^m (z)=\infty$ for some $m\geq 1$ and  $J$ be the Julia component containing $z$. Then $f^k(J)$ contains $\infty$ for some $k \leq m$. Choose the smallest such $k$. Consider a sequence of Jordan curves $\gamma_n$, each belonging to a distinct BWD such that each $\gamma_n$ surrounds the origin and $\cap_{n \geq 1} \mathbb{C} \setminus B(\gamma_n) = \emptyset$ where $B(\gamma_n)$ is the set of all points surrounded by $\gamma_n$. Then there is a sequence of closed curves $\alpha_n$ each surrounding $z$ such that $f^k (\alpha_n) \subseteq \gamma_n$.  Since $f$ is meromorphic, $\cap_{n \geq 1} B(\alpha_n) =\{z\}$. As each $\alpha_n$ is in the Fatou set - in fact in a BWD,  the set $J$ must be singleton and buried.
 	\end{proof}

The argument used in the proof of Proposition~\ref{pre-poles} in fact gives that the set of all singleton and buried Julia components is backward invariant, i.e., if $J$ is a singleton buried Julia component and $J_{-1}$ is a Julia component such that $f(J_{-1}) \subseteq J$ then $J_{-1}$ is singleton and buried. It can also be seen that the set of all singleton and buried Julia components is completely invariant. The complement of this set in the Julia set is also completely invariant. Existence of singleton buried Julia components for entire function, where the backward orbit of $\infty$ is empty, was reported by Dominguez in 1997.
 	\begin{theorem}[Theorem 8.1, \cite{Dominguez 1997}]
 		If an entire function has BWD then singleton and buried components are dense in its Julia set.  
 	\end{theorem}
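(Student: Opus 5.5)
Let $f$ be entire with a BWD $U$. I would show that every nonempty open set $V$ meeting $\mathcal{J}(f)$ contains a singleton buried Julia component.

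\textbf{Step 1: nested curves near $\infty$.} For all large $n$, $U_n$ is bounded, multiply connected and surrounds $0$. Each $U_n$ then contains a Jordan curve $\gamma_n$ surrounding $0$, with $\operatorname{dist}(\gamma_n,0)\to\infty$. Let $B(\gamma_n)$ denote the bounded complementary component of $\gamma_n$. The domains $B(\gamma_n)$ then exhaust $\mathbb C$. Points of $\mathcal J(f)$ lying outside all of the $B(\gamma_n)$ cannot exist, because $\bigcap_n(\mathbb C\setminus B(\gamma_n))=\emptyset$. The point $\infty$ itself is a singleton buried component (Proposition~\ref{pre-poles}), but it has no preimages in $\mathbb C$ since $f$ is entire. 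So I will produce finite singleton components by a different route.

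\textbf{Step 2: the set of non-escaping-to-boundary points.} Consider
\[
K=\{z\in\mathcal J(f): \text{for every } k,\ f^{k}(z)\ \text{does not lie in } B(\gamma_{m})\setminus \overline{B(\gamma_{m-1})} \text{ in a way eventually confined}\}.
\]
This is a placeholder, and in practice I would use a cleaner device. Take $z\in\mathcal J(f)$ whose orbit satisfies $|f^k(z)|\to\infty$ along a subsequence, so that each $f^{k_j}(z)$ lies between $\gamma_{m_j}$ and $\gamma_{m_j+1}$ with $m_j\to\infty$. Pulling back the annular region between $\gamma_{m_j}$ and $\gamma_{m_j+1}$, together with the curves $\gamma_{m_j+1}$, by the component of $f^{-k_j}$ containing $z$ gives a curve $\alpha_j\subset$ Fatou set surrounding $z$. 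This works because preimages of Jordan curves surrounding large disks under an entire map contain closed curves surrounding the relevant preimage points, by the maximum principle: a bounded complementary component of $f^{-k}(B(\gamma))$ cannot exist. If $\operatorname{diam}\alpha_j\to 0$, then the Julia component of $z$ is $\{z\}$. It is also buried, since the $\alpha_j$ lie in wandering domains that separate $z$ from every other Fatou component.

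\textbf{Step 3: density.} Use the fact that $\mathcal J(f)$ is the closure of the backward orbit of any non-exceptional point. Alternatively use Picard together with Montel: for any open $V$ meeting $\mathcal J(f)$, the iterates $f^k(V)$ eventually cover every annulus between $\gamma_m$ and $\gamma_{m+1}$ for large $m$ (blow-up property). By iteratively choosing preimages, I build a nested sequence of compact sets $V\supset V_1\supset V_2\supset\cdots$. Each $V_j$ is a component of a pullback of an annular region between consecutive BWD curves. Their intersection gives a point $z$ whose orbit repeatedly visits the far annuli, as in Step~2.

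\textbf{Main obstacle.} The hardest part is showing $\operatorname{diam}\alpha_j\to0$. Here I would try a hyperbolic/modulus argument. The annuli in $U_m$ have moduli tending to $\infty$, since the BWD annuli grow as in the examples, by a result of the type $\operatorname{mod}\to\infty$ for BWDs. Pulling back under the proper maps $f^{k}:A\to U_m$, which have degree controlled on these annuli, the moduli around $z$ do not shrink too much. Grötzsch's inequality then forces the nested annuli around $z$ to have total modulus $\infty$, giving $\bigcap B(\alpha_j)=\{z\}$. If the degree control fails because of critical points, I would instead select the points $z$ in Step~3 so that each pullback avoids critical values. This is possible because the pullback can be chosen inside a region where $f^k$ is univalent, by the Koebe-type argument used in blow-up constructions.
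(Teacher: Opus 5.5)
The paper does not prove this theorem; it only quotes it from \cite{Dominguez 1997}. Your argument therefore has to stand on its own, and it has a genuine gap exactly at the step you flag. Nothing shows that the curves $\alpha_j$ shrink to $z$, or equivalently that the nested compacta of Step~3 meet in a single point, and the modulus sketch cannot deliver this as stated.

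The sketch uses nothing about $z$ except that its orbit visits the regions between $\gamma_m$ and $\gamma_{m+1}$ with $m\to\infty$. Every point $z$ on the outer boundary of some $U_n$ has this property, yet its Julia component contains that whole non-degenerate continuum. For such $z$, the connected set $\overline{U_n}\cup B(\gamma_n)$ contains $z$ and is mapped by $f^{k}$ into $B(\gamma_{n+k+1})$. Hence $B(\alpha_j)\supset\overline{U_n}$ for every $j$, and the curves cannot shrink to $z$. So any argument that proves shrinking without further hypotheses on $z$ must be wrong.

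Your sketch breaks at its two unproved points:
\begin{itemize}
\item The pulled-back annuli are never shown to be disjoint and nested, which Gr\"otzsch's inequality requires.
\item There is no degree control at all. In Example~\ref{1st exm}, for instance, these pullbacks contain the superattracting fixed point $0$ of local degree $2$, so $f^{k}$ has degree at least $2^{k}$ on them.
\end{itemize}

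The univalent fallback is not an argument either. The sets you pull back are the huge disks $B(\gamma_m)$, which contain critical values of $f$. These are unbounded, since $sing(f^{-1})$ is unbounded and there is no finite asymptotic value (Proposition~\ref{unbounded-BWD}). You would have to prove that points of an arbitrary open $V$ can be reached along pullbacks of bounded degree, and even then Koebe distortion gives only relative, not absolute, smallness. Note that the paper itself poses as open (the question after Theorem~\ref{Kisaka2025}) whether a wandering Julia component with unbounded orbit, disjoint from all BWD boundaries, must be a singleton. So visiting far annuli cannot be the mechanism; the shrinking has to be forced by the choice of $z$.

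Buriedness also does not follow as you claim, even granting $\operatorname{diam}\alpha_j\to 0$. If $W$ is a Fatou component with $z\in\partial W$, then for each $j$ either $W\subset B(\alpha_j)$ or $\alpha_j\subset W$. The first is impossible for large $j$, so $\alpha_j\subset W$ for all large $j$. Nothing in your argument excludes this, and in that case $\{z\}$ is a singleton component lying on $\partial W$, hence not buried. You need an extra property of $z$. For instance, $z\notin I(f)$ suffices: such a $W$ contains a curve surrounding a Julia point, so it is multiply connected, hence a BWD (Corollary~\ref{BWD-iff-MCFC}), and $\overline{W}\subset A(f)\subset I(f)$ by the Rippon--Stallard theorem quoted in the paper. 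Alternatively, keep $z$ away from $\overline{W_j}$, where $W_j\supset\alpha_j$.

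Separately, in Step~2 the components of $f^{-k}(B(\gamma))$ must be shown to be bounded, not just simply connected. This follows from Proposition~\ref{unbounded-BWD} (an unbounded component would contain an unbounded curve with bounded image), not from the maximum principle.

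A workable skeleton is a nested construction. Take disks $V_j$ with radii tending to $0$, and Jordan curves $\sigma_j\subset V_j$ lying in Fatou components $W_j$, each surrounding a point of $\mathcal{J}(f)\setminus\overline{W_j}$. Choose $V_{j+1}$ centred at such a point, with $\overline{V_{j+1}}\subset B(\sigma_j)\setminus\overline{W_j}$. Then the unique point of $\bigcap_j\overline{B(\sigma_j)}$ is a buried singleton Julia component by pure topology. However, the existence of such curves in every disk meeting $\mathcal{J}(f)$ is essentially the content of Dominguez's theorem, and your proposal does not supply it.
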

 There are Julia components $J$ (such as those containing the repelling periodic points of the function) with bounded forward orbits, i.e., $\cup_{n>0} f^n(J)$ is a bounded set.  Clearly, the Julia component  containing the boundary of a BWD is not singleton and is contained in the escaping set of the function.
 	There are recent results by Kisaka relating the topology of Julia components of entire functions with BWD and their behaviour under iteration. A Julia component is called full if its complement in $\mathbb{C}$ is connected. 
 	\begin{theorem}[Theorem A, \cite{Kisaka2025}]
 		Let $f$be  an entire function with BWD. Then for every Julia component $C$ of $f$ with bounded forward orbit, the following are true.
 		\begin{enumerate}
 			\item  $C$ is quasiconformally homeomorphic to a Julia component of a polynomial.
 			\item If $C$ is full then it is buried.
 			\item If $C$ is not full then each bounded component of its complement consists of either an attracting domain, a parabolic domain, a Siegel disk or one of their pre-images.
 			\item If $C$ is wandering then it is singleton as well as buried.
 		\end{enumerate}
 		
 		\label{Kisaka2025} 		
 	\end{theorem}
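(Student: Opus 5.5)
The plan is to reduce the statement to polynomial dynamics by building a polynomial-like restriction of $f$ around the forward orbit of $C$, and then to import known polynomial results through the Douady--Hubbard straightening theorem.

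\emph{Construction of the polynomial-like map.} Let $U$ be a BWD of $f$. By Theorem~\ref{Thm1}, for all large $n$ the component $U_n$ contains a Jordan curve $\gamma_n$ surrounding $0$, with $\mathrm{dist}(\gamma_n,0)\to\infty$. Since $\bigcup_{k\ge 0} f^k(C)$ is bounded, I fix $n$ so large that $\gamma_n$ surrounds this orbit. Let $D_m$ denote the bounded complementary component of $\gamma_m$.
\begin{enumerate}
\item Because $f^k\to\infty$ locally uniformly on $U$, I can choose $\gamma_{n+1}\subset U_{n+1}$ with $\overline{D_n}\subset D_{n+1}$ and $f(\gamma_n)$ lying outside $\overline{D_{n+1}}$. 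One may need to replace $\gamma_{n+1}$ by a curve of the form $f(\gamma_n)$ pushed slightly inward.
\item Let $D'$ be the component of $f^{-1}(D_{n+1})$ containing $C$. By Theorem~\ref{BOV-BWD}, $\infty$ is a Baker omitted value, so $f^{-1}$ of a neighbourhood of $\infty$ is connected with only bounded boundary components. Hence every component of $f^{-1}(D_{n+1})$ is bounded.
\item By the maximum modulus principle and the choice of $\gamma_n$, we get $D'\subset D_n\Subset D_{n+1}$.
\end{enumerate}
Thus $f\colon D'\to D_{n+1}$ is proper of finite degree $d$, i.e.\ a polynomial-like map. If $d=1$, then $f$ is univalent, which contradicts the bounded-orbit Julia points inside; so $d\ge 2$.

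\emph{Part 1.} I claim that the filled Julia set $K$ of this polynomial-like map contains the whole forward orbit of $C$, and that $J(f)\cap K$ is its Julia set $\partial K$. This follows because $\{f^k\}$ is normal exactly where the polynomial-like iterates are normal. Straightening gives a quasiconformal conjugacy of $f$ near $K$ to a polynomial $P$, and it carries $C$ onto a component of $J(P)$. This proves item 1.

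\emph{Items 3 and 4.} These then follow from polynomial theory.
\begin{itemize}
\item For item 3, a bounded complementary component of $C$ lies in $K$, so its Fatou components are, via the conjugacy, bounded Fatou components of $P$. By Sullivan's theorem and the absence of Herman rings and Baker domains for polynomials, these are attracting, parabolic or Siegel components, or preimages of them.
\item For item 4, I would invoke the theorem of Qiu--Yin and Kozlovski--van Strien that wandering Julia components of polynomials are points. This gives that $C$ is singleton. Buriedness then follows as in item 2, since a point has no complementary bounded components.
\end{itemize}

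\emph{Item 2, the main obstacle.} Buriedness is where polynomial theory alone fails, because for $P$ with disconnected Julia set every component touches the basin of $\infty$. Here, however, the points that leave $D'$ do not form one Fatou component: they land in $J(f)$ or in Fatou components of $f$ outside $K$. Suppose a Fatou component $V$ of $f$ meets $\partial C$. I split into two cases.
\begin{itemize}
\item If $V$ has bounded orbit, then $V\subset K$ and $V\cup C$ lies in one component of $K$. Since $C$ is full, $V$ cannot sit in a complementary hole of $C$. I then expect a contradiction with $C$ being a whole component of $J(P)$ adjacent to an interior component of $K$; this needs care, via the boundary of that interior component being a connected subset of $J$ containing points of $C$.
\item If the orbit of $V$ is unbounded, I would use that unbounded-orbit Fatou components are preimages of BWDs, hence multiply connected. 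Then $\partial V$ has a bounded complementary piece, and the fullness of $C$ together with the annular structure, lifted through the finite-degree proper maps, should force $C$ into a hole of $V$, contradicting $C\cap\partial V\neq\emptyset$ in the wrong component.
\end{itemize}
Making this second case rigorous is where I expect the real work.
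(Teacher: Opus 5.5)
\textbf{Overall.} Your strategy is the route the paper attributes to Kisaka's Main Lemma: restrict $f$ to a polynomial-like map between curves in the BWD, then straighten. Items 1, 3 and the singleton part of item 4 go through along your lines, after one repair described next. The genuine gap is item 2.

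\textbf{Repair in Part 1.} Choosing $n$ only so that $\gamma_n$ surrounds the orbit of $C$ does not ensure $f^j(C)\subset D'$ for all $j$. The set $f^{-1}(D_{n+1})\cap D_n$ may a priori have several components, and your appeal to normality does not address this. Instead take $n$ so large that the orbit lies inside $\gamma_{n-1}\subset U_{n-1}$. Since $U_n$ is connected, misses $\gamma_{n+1}$ and contains $\gamma_n\subset D_{n+1}$, we get $f(\gamma_{n-1})\subset U_n\subset D_{n+1}$. The maximum principle then gives $f(\overline{D_{n-1}})\subset D_{n+1}$. Hence $D'\supset\overline{D_{n-1}}$, and $D'$ contains the whole orbit.

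\textbf{The gap in item 2.} Item 2 is not proved, and the buriedness in item 4 fails with it, since you derive it from item 2.

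Your second case rests on a false claim. A Fatou component with unbounded orbit need not be a preimage of a BWD. Bergweiler's example \cite{Berg 2010-simply-multiply-wd} has a simply connected wandering domain alongside a BWD. By Theorem~\ref{scwd-bded} that domain has unbounded orbit, yet it cannot map properly onto a multiply connected $U_k$.

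No case split is needed. The missing observation is that $\partial D'\subset f^{-1}(\gamma_{n+1})$ lies in the Fatou set. Let $P_k$ be the component of $(f|_{D'})^{-k}(D')$ containing $C$. Then:
\begin{itemize}
\item The $P_k$ are bounded simply connected domains with $\overline{P_{k+1}}\subset P_k$.
\item $\partial P_k$ is a connected subset of $f^{-(k+1)}(\gamma_{n+1})$. Hence it lies in a single Fatou component $W_k$ with $f^{k+1}(W_k)\subset U_{n+1}$.
\item The $W_k$ are pairwise distinct, because $U$ is wandering.
\end{itemize}

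Now suppose $C$ is full. Then $C$ is a whole component of $K$. Indeed, the component $L\supset C$ of the full set $K$ is full. Its boundary satisfies $\partial L\subset\partial K\subset J(f)$, is connected, and contains $C$. So $\partial L=C$, and therefore $L=C$. Consequently $\bigcap_k\overline{P_k}=C$.

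Let $W$ be a Fatou component with $\overline{W}\cap C\neq\emptyset$. Then $W$ meets every $P_k$. It can meet $\partial P_k$ for at most one $k$, since the $W_k$ are distinct. So $W\subset P_k$ for all large $k$, i.e.\ $W\subset C\subset J(f)$, which is absurd.

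This proves item 2 for all Fatou components at once, and it subsumes your first case. Combined with your singleton conclusion, it also gives the buriedness in item 4.
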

 	The main idea of the proof of Theorem~\ref{Kisaka2025} is that $f^k$ for some $k \geq 1$ is a polynomial-like map on suitable domains (see Main Lemma in \cite{Kisaka2025}). However, this observation is already made by Zheng (see Proof of Theorem 3, \cite{Zheng 2000}). Indeed Theorem E of \cite{Kisaka2025} is a restatement of Theorem 3 of \cite{Zheng 2000}.   A natural question arises in the back drop of Theorem~\ref{Kisaka2025}.
 	\begin{Question}
 	Let $C$ be a  Julia component of an entire function $f$ with BWD. Also, let $C$ be disjoint from  the boundary of every BWD and its forward orbit  is unbounded. Then, are the following true?
 	
 	\begin{enumerate}
 		\item If $C$ is full then it is buried.
 		\item If $C$ is wandering then it is singleton as well as buried. 
 	\end{enumerate} 
 	\end{Question}
 There are two possibilities for $C$ in the above question, namely all the points of $C$  are escaping or none is escaping. This observation may be useful for answering the question. 
		\subsection{Escaping set}
	The escaping set for an entire function $f$ is defined as $I(f)=\{z: \lim_{n \to \infty}f^n (z)=\infty\}$. 
	 In 1989, Eremenko proved that $\mathcal{J}(f)=\partial I(f)$, $\mathcal{J}(f)\cap I(f)\neq\emptyset$ and $\overline{I(f)}$ has no bounded components  \cite{Eremenko 1989}.  In 1989, Eremenko conjectured that $I(f)$ has no bounded component. Later in 1999, Bergweiler and Hinkkanen \cite{BergHinkk 1999} introduced an important subset $A(f)$ of the escaping set, called the fast escaping set, defined by
	$$A(f)=\{z \in I(f):~\mbox{there exists}~L\in\mathbb{N}~\mbox{such that }  |f^n(z)|>M(R, f^{n-L}), ~\mbox{for all}~n>L \}$$
	where $M(r,f)=\max_{|z|=r} |f(z)|$ and $R$ is any value such that $R>\min_{z\in\mathcal{J}(f)}|z|$. The set $A(f)$ is non-empty and completely invariant. Therefore, $\mathcal{J}(f)\subset\overline{A(f)}$. Further, if $f$ does not have any wandering domain, then $A(f)\subset\mathcal{J}(f)$. All these results can be found in ~\cite{BergHinkk 1999}. In 2005, Rippon and Stallard proved that every component of $A(f)$ is unbounded (see Theorem 1, \cite{RipponStallard 2005}). This gives that $I(f)$ has at least one unbounded component providing some evidence supporting Eremenko's conjecture.  The following theorem by Rippon and Stallard shows that the conjecture is true for entire functions in the presence of BWD.
	
	\begin{theorem}(Theorem 2, \cite{RipponStallard 2005})
		If $f$ is an entire function having a BWD, then
		\begin{enumerate}
			\item the sets $A(f)$ and $I(f)$ are connected and unbounded;
			\item the closure of each BWD  is contained in  $A(f)$.
		\end{enumerate}
	\end{theorem}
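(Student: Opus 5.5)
I would prove part 2 first and then deduce part 1 from it, using the nested structure of the BWD images. Let $U$ be a BWD of the entire function $f$.

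\emph{Step 1: a growth comparison on $U_n$.} By Theorem~\ref{Thm1}, for all large $n$ the component $U_n$ contains a Jordan curve $\gamma_n$ surrounding $0$, with $\mathrm{dist}(\gamma_n,0)\to\infty$. I would fix $R>\min_{z\in\mathcal J(f)}|z|$ and pick $n_0$ with $\gamma_{n_0}$ surrounding $\{|z|\le R\}$. Then I would show by induction that, for some $L$, $f^{k}(\gamma_{n_0})$ surrounds $\{|z|\le M(R,f^{k-L})\}$ for all $k>L$. The mechanism is as follows. Since $f^k(U_{n_0})\subseteq U_{n_0+k}$ and $U_{n_0+k}$ surrounds $U_{n_0}$, the maximum principle gives that the bounded complementary component of $U_{n_0+k}$ contains $f^k$ of the disc bounded by $\gamma_{n_0}$. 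Hence $U_{n_0+k}$ surrounds $\{|z|\le M(R,f^k)\}$. This is a statement about $U_{n_0+k}$, not about a point of it, so I still need the minimum modulus on $U_{n_0+k}$ to be compared with a maximum modulus.

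\emph{Step 2: the key estimate.} This is the step I expect to be the main obstacle. I would use the hyperbolic metric of the large annulus $A\subset U_n$ around $\gamma_n$, together with Harnack-type estimates for $\log|f^k|$ on $U$ (Baker's lemma for multiply connected wandering domains). Both give that $\log|f^k(z)|/\log|f^k(w)|\to1$ uniformly on compact subsets of $U$. Combining this with Step 1 applied to a slightly inner curve, I expect $\min_{\gamma}|f^{k}|\ge M(R,f^{k-L})$ for a fixed shift $L$ and every compact $\gamma\subset U$. This gives $U\subseteq A(f)$. For points of $\partial U$, every $z\in\overline U$ is surrounded by $f^{-m}$-images of $U_{n_0+m}$. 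Alternatively, $\overline{U}$ lies in the closure of the region between two curves of $U$ that are each mapped outside $\{|z|\le M(R,f^{k-L})\}$. The maximum and minimum modulus principles applied on the doubly connected region between $\gamma_n$ and $\gamma'_n$ then give the same bound on $\partial U$. So $\overline U\subseteq A(f)$, which is part 2.

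\emph{Step 3: part 1.} Every component of $A(f)$ is unbounded by Rippon--Stallard (Theorem 1, \cite{RipponStallard 2005}). Suppose $A(f)$ had two components $K_1\neq K_2$. Both are unbounded, so each meets every large $U_n$; they cannot be separated, because $\overline{U_n}\subseteq A(f)$ is connected and surrounds $0$, so it joins them. Hence $A(f)$ is connected. For $I(f)$, I would use $A(f)\subseteq I(f)$. For any $z\in I(f)$, $f^k(z)$ eventually lies outside $U_n$ and gets trapped between consecutive BWDs. Pulling back, $z$ lies in a bounded complementary region of some $U_m$ in the grand orbit. The connected set $I(f)\cup A(f)$ is then obtained by showing that each component of $I(f)$ meets $\overline{U_m}$ for some such $m$; this uses that $\overline{I(f)}$ has no bounded components (Eremenko) applied within the annular regions. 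Unboundedness of $I(f)$ then follows because it contains the unbounded set $A(f)$.
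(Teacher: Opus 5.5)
Your deduction that $A(f)$ is connected (first half of Step 3) is fine once $U_n\subset A(f)$ is known. The problem is that part 2 is not established.

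\textbf{Step 1.} The maximum principle only places the continuum $f^k(\{|z|\le R\})$, which contains one point of modulus $M(R,f^k)$, inside a bounded complementary component of $U_{n_0+k}$. (As written, $f^k$ of the whole disc bounded by $\gamma_{n_0}$ even meets $U_{n_0+k}$, since it contains $f^k(\gamma_{n_0})$.) Such a component need not contain the round disc $\{|w|\le M(R,f^k)\}$, so ``hence $U_{n_0+k}$ surrounds $\{|z|\le M(R,f^k)\}$'' is a non sequitur. This is the entire difficulty. If $U_{n_0+k}$ really surrounded that round disc, it would be disjoint from it, and the lower bound on $|f^k|$ would be immediate with no need for Step 2.

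\textbf{Step 2.} This step cannot fill the gap. Harnack's inequality for the positive harmonic functions $\log|f^k|$ on $U$ gives only a bounded ratio. The claim $\log|f^k(z)|/\log|f^k(w)|\to 1$ uniformly on compacta is false: for a disc $B\ni z_0$ with $\overline B\subset U$, Theorem~\ref{increasing-annulus} gives $f^n(B)\supset A(|f^n(z_0)|^{1-\alpha},|f^n(z_0)|^{1+\alpha})$. In Example~\ref{1st exm}, $f$ behaves like $c_nz^{n+2}$ on $A_n$ and does not pull such ratios towards $1$.

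\textbf{Boundary points.} Your treatment of $\partial U$ also fails. Curves in $U$ cannot enclose $\overline U$, and there is no minimum modulus principle on a doubly connected region where $f^k$ may vanish.

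\textbf{What is missing for part 2.} You need a round annulus $A(a_n,b_n)\subset U_n$ for all $n\ge N$, which Theorem~\ref{Thm15} or Theorem~\ref{increasing-annulus} provides. The component of $\mathbb{C}\setminus U_{n+1}$ containing $0$ contains $U_n$ and lies in $\{|w|\le a_{n+1}\}$. Meanwhile $\overline{U_{n+2}}$ lies in the unbounded component of $\mathbb{C}\setminus U_{n+1}$, hence in $\{|w|\ge b_{n+1}\}$. Take $a_N>R$ and $\rho\in(a_N,b_N)$, so that $f^k(\{|z|=\rho\})\subset U_{N+k}$. Then for every $z\in\overline{U_{N+2}}$ you get $M(R,f^k)\le M(\rho,f^k)\le a_{N+k+1}<b_{N+k+1}\le |f^k(z)|$. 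Thus $\overline{U_{N+2}}\subset A(f)$, and $\overline U\subset A(f)$ follows by complete invariance, with boundary points handled automatically.

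\textbf{Connectedness of $I(f)$.} There is a second genuine gap here. Eremenko's theorem concerns components of $\overline{I(f)}$. Passing from that to ``every component of $I(f)$ meets some $\overline{U_m}$'' is exactly the question, open in general, of whether components of $I(f)$ are unbounded. Nor are orbits ``trapped between consecutive BWDs'': points between $U_m$ and $U_{m+1}$, for instance near zeros of $f$, can be mapped far inside.

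\textbf{A working route for $I(f)$.}
\begin{enumerate}
\item Because $\mathcal{J}(f)\subset\overline{A(f)}$, the set $A(f)\cup(I(f)\cap\mathcal{J}(f))$ lies between the connected set $A(f)$ and its closure, so it is connected.
\item The rest of $I(f)$ consists of Fatou components $V\subset I(f)$, which are bounded by Theorem~\ref{Thm1}.
\item If $V_n=U_m$ for some $n,m$, then $\overline V\subset A(f)$ by part 2.
\item Otherwise each $V_n$ is connected and disjoint from $\overline{U_m}$. Fix $z_0\in V$ and let $H_m$ be the component of $\mathbb{C}\setminus U_m$ containing $0$. Once $f^n(z_0)\notin H_m$, connectedness forces $V_n\cap H_m=\emptyset$. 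Hence $f^n(\overline V)\subset\overline{V_n}\subset\{|w|\ge \mathrm{dist}(0,U_m)\}$ for all large $n$, and so $\overline V\subset I(f)$.
\item Each $\overline V$ is therefore a connected subset of $I(f)$ meeting $I(f)\cap\mathcal{J}(f)$ in $\partial V\neq\emptyset$.
\end{enumerate}
It follows that $I(f)$ is connected.
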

 
	The first conclusion of the above theorem gives that the escaping set of an entire function with BWD contains all BWDs along with some connected sets each of which intersects the Julia set and joins two BWDs. 
	\par For a BWD $U$ of an entire function $f$, the set $\cup_{n\geq 1}f^n(U)$ is clearly unbounded. This leads to an interesting question.
	\begin{Question}
	For a simply connected wandering domain  $W$ of an entire function $f$, is it always true that  $\cup_{n\geq 1}f^n(W)$ is unbounded?
	\label{WD-orbunbded}
	\end{Question} 
	This question can be reworded as: \textit{Is every simply connected wandering domain of an entire function is either escaping or oscillating?} In 2000, Zheng answered this question positively under the hypothesis  that the function has a BWD.
	\begin{theorem}(Theorem 3, \cite{Zheng 2000})
		Let $f$ be an  entire function having a BWD. Then for every wandering domain $U$ of $f$, there is a subsequence $ {n_k}$ of natural numbers such that $f^{n_k}(z)\to\infty$ uniformly as $k \to \infty$ on $U$. In particular, $\bigcup_{n \geq 1}U_n$ is unbounded.
	\label{scwd-bded}
	\end{theorem}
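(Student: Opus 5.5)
We argue by contradiction, with the aim of showing that the orbit of $U$ cannot stay bounded; the subsequence statement then follows from the structure of the BWD annuli. Let $V$ be a BWD of $f$. By Theorem~\ref{Thm1}, for all large $n$ there are Jordan curves $\gamma_n\subset V_n$ surrounding $0$ with $\mathrm{dist}(\gamma_n,0)\to\infty$, and $\gamma_{n+1}$ surrounds $\gamma_n$. Let $B_n$ be the bounded complementary component of $\gamma_n$ containing $0$. First we show that the wandering domain $U$, which is bounded because every Fatou component is, cannot lie in $B_N$ for all its iterates. Suppose $f^k(U)\subset B_N$ for every $k$. We use the polynomial-like structure of $f$ (the idea behind the Main Lemma of \cite{Kisaka2025} and the proof of Theorem 3 in \cite{Zheng 2000}). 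By the Maximum Modulus Principle and $f(\gamma_n)\subset V_{n+1}$, we have $f(B_n)\subset B_{n+1}$. The key step is to find $p\ge1$ and a Jordan domain $D\subset B_{N+p}$ (a component of $f^{-p}(B_{N+p})$ containing $\overline{B_N}$) such that $f^p:D\to B_{N+p}$ is a proper map of finite degree with $\overline D$ compactly contained in $B_{N+p}$. This is a polynomial-like map. By the Straightening Theorem of Douady--Hubbard, it is quasiconformally conjugate near its filled Julia set to a polynomial. Components of the Fatou set of $f$ inside $B_N$ whose orbit under $f^p$ remains in $D$ then correspond to Fatou components of a polynomial. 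Sullivan's theorem forbids wandering domains for polynomials, so $U$ cannot wander while staying in $B_N$, which contradicts the assumption. Applying this to each $f^j(U)$ with $0\le j<p$ handles the residues mod $p$.

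Given that the orbit is unbounded, we next upgrade to uniform escape along a subsequence. Since $f^{m}(U)\not\subset B_N$ for some $m$, and $U_m$ is a Fatou component disjoint from the $V_n$, $U_m$ lies in some annular region between $\gamma_n$ and $\gamma_{n+1}$, or outside. Because the $\gamma_n$ exhaust $\mathbb C$, for each $N$ there is $m_N$ with $U_{m_N}$ outside $\overline{B_N}$, that is, $|f^{m_N}(z)|\ge \mathrm{dist}(\gamma_N,0)$ uniformly on $U$. Choosing $N_k\to\infty$ gives the subsequence $n_k=m_{N_k}$, and $f^{n_k}\to\infty$ uniformly on $U$. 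In particular, $\bigcup_n U_n$ is unbounded.

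The main obstacle is the construction of the polynomial-like restriction. We must ensure that the component $D$ of $f^{-p}(B_{N+p})$ containing $B_N$ is bounded with $\overline D\subset B_{N+p}$, and that $f^p|_D$ is proper. Boundedness follows because the Julia set near $\infty$ is separated by the curves $\gamma_n$: preimages of $\gamma_{N+p}$ near $\partial D$ lie in the BWD $V_{N}$ region, by complete invariance of the Fatou set and the fact that $f^{-p}(V_{N+p})$ components surrounding $B_N$ must be $V_N$ itself. We then take $D$ to be the component bounded by a preimage curve inside $V_N$, or $V_{N+1}$, surrounding $\gamma_N$. Finite degree comes from boundedness and the argument principle. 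The compact containment $\overline D\subset B_{N+p}$ uses the fact that $V_{N+p}$ surrounds $V_N$.
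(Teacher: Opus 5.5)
\textbf{Verdict: right strategy, but the step that carries the proof is not established.} Your plan is the one the survey attributes to Zheng and to Kisaka's Main Lemma: confine a bounded orbit inside a polynomial-like restriction of $f$, straighten, and invoke Sullivan. Your endgame is sound. Once $f^p:D\to B_{N+p}$ is polynomial-like, we have $\mathrm{int}\,K\subset F(f)$, $U$ and its iterates are components of $\mathrm{int}\,K$, and straightening would produce a wandering Fatou component of a polynomial. The extraction of $n_k$ from ``$U_m$ misses $\gamma_N$ and is not inside $B_N$'' is also correct. The gap is in the construction of the polynomial-like map, the step you yourself flag, where two claims fail as justified.

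\textbf{First claim.} $f(B_n)\subset B_{n+1}$ does not follow from the maximum modulus principle for the curves supplied by Theorem~\ref{Thm1}. That principle only puts $f(B_n)$ in the filled hull of $f(\gamma_n)$, and $f(\gamma_n)$ may cross $\gamma_{n+1}$, for example if $\gamma_{n+1}$ runs near the inner boundary of $V_{n+1}$. You must choose the curves inductively, taking $\gamma_{n+1}\subset V_{n+1}$ to be a Jordan curve enclosing both $f(\gamma_n)$ and $0$.

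\textbf{Second claim.} The compact containment $\overline D\subset B_{N+p}$ rests on the assertion that the only component of $f^{-p}(V_{N+p})$ surrounding $B_N$ is $V_N$. You do not prove this, and it is not needed. Here is an argument that does suffice.
\begin{enumerate}
\item Since $B_{N+p}$ is a Jordan domain, the component $D$ of $f^{-p}(B_{N+p})$ containing $\overline{B_N}$ is bounded. Otherwise it would contain an unbounded curve with bounded image under $f^p$, contradicting Proposition~\ref{unbounded-BWD} for $f^p$.
\item $D$ is simply connected, by the maximum principle applied to the interior of its filled hull.
\item Hence $\partial D$ is connected and lies in $f^{-p}(\gamma_{N+p})\subset F(f)$. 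So $\partial D$ lies in a single Fatou component $W$ with $f^p(W)\subset V_{N+p}$, and $W$ surrounds $\overline{B_N}$, hence surrounds $0$.
\item $W\neq V_{N+p}$, because $V$ wanders. Also $W$ cannot surround $V_{N+p}$. If it did, the maximum principle would give $V_{N+2p}=f^p(V_{N+p})\subset\widehat{\overline{V_{N+p}}}$, the filled hull of $\overline{V_{N+p}}$. That contradicts the fact that $V_{N+2p}$ surrounds $V_{N+p}$ for large $N$.
\item Two disjoint domains surrounding $0$ are nested. So $W$ lies in the complementary component of $V_{N+p}$ containing $0$, which is contained in $B_{N+p}$. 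Therefore $\partial D\subset B_{N+p}$, and $\overline D\subset B_{N+p}$.
\end{enumerate}

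\textbf{Smaller points.} With this argument, $p=1$ already works, so the residue-class step is unnecessary. You should also treat separately the trivial case where some $U_m$ equals some $V_n$, since your second step assumes $U_m$ is disjoint from all the $V_n$.
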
 
Although, the question still remains open in its full generality, recently in 2024, Pardo-Sim\'on and Sixsmith constructed a simply connected wandering domain with the property that,  nearly all of its forward iterates are
	contained within a bounded domain,  in \textit{some sense}. The precise results  can be found in \cite{Simon Sixsmith 2024}.	This question was posed by Bergweiler in 1993 for meromorphic functions, possibly with poles (see Question 8 in \cite{Berg 1993}). To the best of  our knowledge, it is not known whether Theorem~\ref{scwd-bded} is true for meromorphic functions, even with finitely many poles or not.
	\begin{Question}
		Let $f$ be a meromorphic function with finitely many poles and have BWD. If $U$ is a wandering domain, but not BWD of $f$ then is it always true that $\bigcup_{n \geq 1}U_n$ is unbounded?
	\end{Question}
	\par
	An important consequence of BWDs of a meromorphic function   with finitely many poles is due to Zheng, who proved the following in 2006.
	\begin{theorem}(Theorem, page-25,  \cite{Zheng 2006}\label{Thm15})
	 	Let $f$ be a meromorphic function with finitely many poles and have a BWD $U$.  If $B\subset U$ is a domain   containing a closed curve which is not null-homotopic in $U$, then for all sufficiently large $n$, $A(r_n, R_n)\subset f^n(B)\subset U_n$, where $r_n, R_n >0$ for each $n$ such that  $\lim_{n \to \infty}\frac{R_n}{r_n} = \infty$. 
	\end{theorem}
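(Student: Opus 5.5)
The plan is to combine hyperbolic-metric estimates in the wandering domains with a lower bound on growth coming from the fact that $f^n\to\infty$ on $U$. First reduce to the case where $B$ is a relatively compact subdomain of $U$ containing a Jordan curve $\gamma$ that is not null-homotopic in $U$. Since $U_n$ surrounds $0$ for large $n$, and $f^n$ maps non-contractible curves of $U$ to non-contractible curves of $U_n$, I would first show that $f^n(\gamma)$ winds around $0$ for all large $n$. The argument is by contradiction: if $f^n(\gamma)$ had winding number $0$ about $0$ for infinitely many $n$, then the maximum principle applied to $f^n$ on the region $\operatorname{int}(\gamma)$ would give the following. Either the region bounded by $\gamma$ lies in the Fatou set, which would make $\gamma$ null-homotopic, or it contains a Julia point, whose iterates must then accumulate in a bounded region inside $f^n(\gamma)$. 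The finitely many poles are handled exactly as in Theorem~\ref{Thm2.7}, using the radius $r$ given there. This gives a curve $f^n(\gamma)\subset f^n(B)$ surrounding $0$ and lying in $U_n$, with $\operatorname{dist}(f^n(\gamma),0)\to\infty$.

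Next I would estimate the size of $f^n(\gamma)$ in the hyperbolic metric of $U_n$. By Schwarz--Pick, the hyperbolic diameter $d_n$ of $f^n(\overline{B})$ in $U_n$ is at most the hyperbolic diameter $d$ of $\overline B$ in $U$. Using the standard estimate $\rho_{U_n}(z)\ge c/(|z|\log(|z|/\delta))$ for domains omitting $0$ and a point at distance comparable to $|z|$, this bound says that $f^n(\overline B)$ lies in an annulus $A(a_n,b_n)$ with $\log(b_n/a_n)$ controlled, that is, $\log(b_n/a_n)\le K\log a_n$. This confines $f^n(B)$ but does not yet place a large annulus \emph{inside} it.

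The central step, and the main obstacle, is to show that $f^n(B)$ contains an annulus $A(r_n,R_n)$ of modulus tending to infinity. I would work with $g=f^{n}$ on a round annulus $A\subset B$ around $\gamma$. In logarithmic coordinates, $\log|f^n|$ is harmonic on $A$. Since $f^{n+1}=f\circ f^n$ grows extremely fast on curves surrounding $0$ far out (using $\log M(r,f)/\log r\to\infty$ together with the minimum-modulus comparison $\log|f(z)|\ge (1-o(1))\log M(|z|,f)$ on the curves in the BWD, as in the examples), the ratio $\max_{\gamma'}|f^n|/\min_{\gamma'}|f^n|$ over two concentric curves $\gamma,\gamma'\subset B$ tends to infinity. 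Concretely, I would take two disjoint nested non-contractible curves $\gamma\subset B$ and $\gamma'\subset B$ bounding a subannulus $A^*\subset B$. The goal is to show that $\min_{\gamma'}|f^n|/\max_{\gamma}|f^n|\to\infty$ and that $f^n(\gamma)$, $f^n(\gamma')$ separate.

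For that comparison, the first thing I would try is Harnack's inequality applied to the positive harmonic function $\log|f^n|-\log|f^n|$ on a reference curve, combined with the growth $\log|f^n(z)|\to\infty$ uniformly and the fact that the harmonic measure of $\gamma'$ in $A^*$ is nonconstant. This yields $\log|f^n|\big|_{\gamma'}\ge (1+\eta)\log|f^n|\big|_{\gamma}$ with a fixed $\eta>0$, by a Hadamard three-circles type convexity inherited from the non-contractibility, as in Inequality~(\ref{f(r)-inequality}). Since $\log|f^n|\to\infty$ on $\gamma$, this gives $R_n/r_n\ge |f^n|^{\eta}\to\infty$ with $r_n=\max_\gamma|f^n|$ and $R_n=\min_{\gamma'}|f^n|$.

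Finally, the argument principle (both image curves wind around $0$ with positive degree) shows that $f^n(A^*)\supset A(r_n,R_n)$, and $f^n(A^*)\subset f^n(B)\subset U_n$. The delicate point is establishing the uniform constant $\eta$, which is where the finiteness of the poles and the winding argument of the first step must be used carefully.
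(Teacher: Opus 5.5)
The survey only quotes this theorem from Zheng and gives no proof, so there is nothing in the text to compare your route with. Judged on its own, your argument has a genuine gap at its central step. The inequality $\min_{\gamma'}\log|f^n|\ge(1+\eta)\max_{\gamma}\log|f^n|$ is asserted, not proved. It is essentially the whole theorem; in fact it is a power-type separation like that in Theorem~\ref{increasing-annulus}.

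\begin{itemize}
\item \emph{Harnack gives too little.} Harnack's inequality for the positive harmonic functions $\log|f^n|$ on $A^*$ gives only two-sided comparability, i.e.\ normality of $h_n=\log|f^n|/\log|f^n(z_0)|$. Nothing in your argument excludes $h_n\to 1$ locally uniformly on $A^*$. In that case $f^n(\gamma)$ and $f^n(\gamma')$ both lie in $\{|f^n(z_0)|^{1-o(1)}<|w|<|f^n(z_0)|^{1+o(1)}\}$ and can interlace, and the argument principle produces no annulus.
\item \emph{Harmonic measure is irrelevant here.} The non-constancy of the harmonic measure of $\gamma'$ in $A^*$ only helps once you know that the boundary values of $\log|f^n|$ on $\gamma$ and $\gamma'$ are separated, which is what is to be shown.
\item \emph{Hadamard convexity cannot start the separation.} Inequality~(\ref{f(r)-inequality}) concerns $M(r,f)$ for one explicit product. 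Transferring it needs two things: the comparison $\log|f(w)|\ge(1-o(1))\log M(|w|,f)$ on $U_n$, which you justify only ``as in the examples''; and an initial log-scale separation of $f^{n}(\gamma)$ and $f^{n}(\gamma')$. Convexity of $\log M$ can propagate such a separation, while the factor $1-o(1)$ erodes it, but it cannot create one. So the reasoning is circular.
\end{itemize}

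To see what is missing, let $\phi$ map $\{1<|\zeta|<e^m\}$ conformally onto $A^*$, and take $\gamma,\gamma'$ to be the images of $|\zeta|=\rho<\rho'$. Then $\log|f^n\circ\phi|=d_n\log|\zeta|+u_n$. Here $d_n$ is the winding number of $f^n(\gamma)$ about $0$, and $u_n$ has a single-valued harmonic conjugate, so its circle means are constant. Comparing circle means gives $\log(R_n/r_n)\le |d_n|\log(\rho'/\rho)\le |d_n|\,m$. Hence any version of your argument must prove two things: that $|d_n|\to\infty$, and that the oscillation of $u_n$ on these circles is small compared with $|d_n|$. In other words, you must show that on the relevant part of $U_n$ the size of $\log|f|$ is governed by the zeros of $f$ surrounded there. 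This is where the growth $\log M(r,f)/\log r\to\infty$ and Hadamard convexity would genuinely have to be used, and the proposal contains no such argument.

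Step 1 also has a gap. When $\operatorname{int}(\gamma)$ contains poles of $f^n$, the maximum principle is unavailable. By the argument principle, the winding number of $f(\gamma)$ about $0$ equals the number of zeros minus the number of poles of $f$ in $\operatorname{int}(\gamma)$, and this can vanish. Citing Theorem~\ref{Thm2.7} does not address this. Even for entire $f$, the contradiction should come from Montel's theorem applied to the non-normal subsequence $\{f^{n_k}\}$ near a repelling periodic point in $\operatorname{int}(\gamma)$, not from the orbit of a single Julia point.
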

	Theorem~\ref{Thm15} is true for entire functions as mentioned in \cite{BergRippStall 2013}. The set $B$ in this theorem is multiply connected.  
	Later in 2013, what Bergweiler et al. \cite{BergRippStall 2013} proved for entire function gives that the iterated images of any domain (without any restriction on its connectivity)  contained in a BWD must contain annuli with increasing modulus,, which is not necessarily the case for Theorem~\ref{Thm15}. 
	\begin{theorem}(Theorem 1.2, \cite{BergRippStall 2013}\label{increasing-annulus})
		Let  an entire function $f$ have  a BWD $U$. Then, for each $z_0\in U$ and each open set $B\subset U$ containing $z_0$, there exists $0<\alpha<1$ such that, for all sufficiently large $n$, 
		$A(r_n, R_n)\subset f^n(B)\subset U_n $ 
		where $r_n=|f^n(z_0)|^{1-\alpha}$ and $R_n=|f^n(z_0)|^{1+\alpha}$. Further,  $\liminf_{n\to\infty}\frac{\log R_n}{\log r_n}>1$. 
	\end{theorem}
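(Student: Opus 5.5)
The plan is to combine two ingredients: the hyperbolic-metric comparison argument that gives a uniform bound on $\log|f^n|$ over compact subsets of a wandering domain, and the annulus-filling property of Theorem~\ref{Thm15} (available for entire functions, as remarked after it).

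\emph{Step 1: reduce to a doubly connected annulus.} Fix $z_0\in U$ and an open set $B\subset U$ containing $z_0$. Shrinking $B$ only makes the conclusion stronger, so we may assume $B$ is a small round disk $\{|z-z_0|<\delta\}$. Since $U$ is multiply connected, choose a closed curve $\gamma\subset U$ through $z_0$ that is not null-homotopic in $U$. Let $\gamma_n=f^n(\gamma)$. For large $n$, $U_n$ is a bounded domain surrounding $0$, and by Theorem~\ref{Thm1} the curves $\gamma_n$ eventually wind around $0$.

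\emph{Step 2: a Harnack-type comparison on $U$.} Since $f^n\to\infty$ locally uniformly on $U$, for large $n$ the function
$$u_n(z)=\frac{\log|f^n(z)|}{\log|f^n(z_0)|}$$
is a positive harmonic function on a neighbourhood of any compact set $K\ni z_0$ in $U$. By Harnack's inequality, $u_n$ is bounded between constants $c_K^{-1}$ and $c_K$ on $K$, independently of $n$. Applying this with $K=\gamma\cup\overline{B'}$, where $B'$ is a slightly smaller disk, gives
$$|f^n(z_0)|^{1/c}\le |f^n(z)|\le |f^n(z_0)|^{c} \quad \text{for } z\in K.$$
Comparable moduli alone do not give containment of annuli, however. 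For that I would use the fact (proved in \cite{BergRippStall 2013} via Theorem~\ref{Thm15}-type arguments) that $f^n(U)$ contains annuli $A(a_n,b_n)$ with $\log b_n/\log a_n\to$ some limit exceeding $1$, and that these annuli contain $f^n(z_0)$ deep inside in the logarithmic scale.

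\emph{Step 3: transfer from $U$ to $B$, the main obstacle.} We must show that the small disk $B$ itself, and not just $U$, eventually covers an annulus of logarithmic width proportional to $\log|f^n(z_0)|$. I would argue as follows.

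1. Let $A_n$ be a large annulus in $U_n$ around $|f^n(z_0)|$, with logarithmic modulus at least $2\beta\log|f^n(z_0)|$ for some $\beta>0$.
2. Pull back: the component $V_n$ of $f^{-n}(A_n)$ containing $z_0$ lies in $U$. Using the hyperbolic metric, the hyperbolic distance in $U$ between $z_0$ and points of $\partial V_n$ tends to $0$ relative to the hyperbolic distance in $A_n$ between $f^n(z_0)$ and $\partial A_n$, which tends to $\infty$.
3. Equivalently, consider $\log f^n$ on a simply connected neighbourhood of $z_0$. Its derivative satisfies $|(\log f^n)'(z_0)|\asymp\log|f^n(z_0)|$, by Harnack applied to the harmonic function $\log|f^n|$.
4. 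By Koebe-type (Bloch) estimates, $\log f^n(B)$ contains a disk of radius $\gtrsim \delta\log|f^n(z_0)|$ centred at $\log f^n(z_0)$, provided $\log f^n$ is univalent on a fixed fraction of $B$. Univalence would come from $A_n$ being free of critical values in the relevant region, or else via Bloch's theorem.
5. A disk of radius $2\pi+\alpha\log|f^n(z_0)|$ in the logarithmic coordinate covers a full vertical period. Its image under $\exp$ therefore contains $A(|f^n(z_0)|^{1-\alpha},|f^n(z_0)|^{1+\alpha})$.

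The delicate point is obtaining a lower bound on the size of the disk contained in $\log f^n(B)$, rather than merely an upper Koebe bound, without univalence. My first attempt would be to use that $f^n(B)\subset U_n$, which omits a neighbourhood of $0$ and the Julia set. Then $\log f^n$ on $B$ omits the large set $\log(\mathcal{J}(f))$, and a Schottky/Landau-type argument would force the image to be large once the derivative is large.

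Finally, $\log R_n/\log r_n=(1+\alpha)/(1-\alpha)>1$, which gives the $\liminf$ statement.
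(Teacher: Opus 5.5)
Your Step~3 has a genuine gap, and it sits exactly where the content of the theorem lies. (The survey only quotes this result from \cite{BergRippStall 2013} and gives no proof.)

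In item~3 you claim $|(\log f^n)'(z_0)|\asymp\log|f^n(z_0)|$ by Harnack. Harnack's inequality for the positive harmonic function $\log|f^n|$ gives only the upper bound $|(\log f^n)'(z_0)|\le C\log|f^n(z_0)|$. It ignores the dynamics: it is equally satisfied by sequences with $h_n=\log|f^n|/\log|f^n(z_0)|\to 1$.

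The lower bound is in fact false in general. The theorem is asserted for every $z_0\in U$, and BWDs can contain critical points (Example~\ref{2nd exm}). If $f^j(z_0)$ is a critical point of $f$, then $(\log f^n)'(z_0)=0$ for all $n>j$.

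Item~4 and your Schottky/Landau remark both depend on this bound, and item~4 has further problems:
\begin{itemize}
\item Koebe's theorem needs $\log f^n$ to be univalent on a disc about $z_0$ of radius independent of $n$. This fails, for instance, in the situation just described.
\item Bloch's theorem gives a disc of controlled radius at an \emph{unknown} position in $\log f^n(B)$, not one centred at $\log f^n(z_0)$. So it cannot produce an annulus centred logarithmically at $|f^n(z_0)|$.
\end{itemize}

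Step~2 does not rescue this. Annuli in $f^n(U)$ with $\log b_n/\log a_n$ bounded away from $1$ are themselves a main result of \cite{BergRippStall 2013}. They do not follow from Theorem~\ref{Thm15}, which only gives $R_n/r_n\to\infty$, allowing $\log(R_n/r_n)=o(\log r_n)$. Even granted, they concern $f^n(U)$, not $f^n(B)$.

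The hyperbolic pull-back in item~2 also yields nothing, for two reasons. Schwarz--Pick only bounds images from above. And the hyperbolic distance in $A(r^a,r^b)$ between the circles $|z|=r$ and $|z|=r^{1+\alpha}$ stays bounded as $r\to\infty$.

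The missing idea is a dynamical non-degeneracy statement: every locally uniform limit $h$ of $h_n$ on $U$ is a \emph{non-constant} harmonic function. Equivalently, $h_n$ cannot flatten out near $z_0$; a uniform lower bound for $|(\log f^n)'|/\log|f^n(z_0)|$ at some point near $z_0$ is essentially the same statement. Harnack cannot supply this, and it is where the real work lies.

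Once you have it, no univalence, derivative bound or Bloch/Landau argument is needed:
\begin{enumerate}
\item On the disc $B$, where $f^n$ has no zeros for large $n$, set $G_n=(\log f^n-i\arg f^n(z_0))/\log|f^n(z_0)|$. Then $G_n(z_0)=1$ and $\mathrm{Re}\,G_n=h_n>0$.
\item By Harnack (this is where your Step~2 is genuinely useful), $\{G_n\}$ is a normal family on $B$.
\item Each limit $G$ is analytic, non-constant, and satisfies $G(z_0)=1$. So Rouch\'e gives $G_{n_k}(B)\supset D(1,\beta_G)$ for large $k$, even when $z_0$ is a critical point.
\item A compactness/contradiction argument yields a single $\beta>0$ with $G_n(B)\supset D(1,\beta)$ for all large $n$.
\item Hence $\log f^n(B)$ contains the disc of radius $\beta\log|f^n(z_0)|$ about $\log f^n(z_0)$. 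Your item~5 then finishes the proof with $\alpha=\beta/2$.
\end{enumerate}
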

 
  From Theorem \ref{increasing-annulus}, it follows that if $U$ is a BWD and $z_0\in U$ then there exists $0<\alpha<1$ such that, for all sufficiently large $n $, the maximal annulus centered at $0$ that is contained in $U_n$ and contains $f^n(z_0)$ is of the form $B_n=A(r^{\alpha_n}_n, r^{\beta_n}_n)$ where $r_n=|f^n(z_0)|$ and $0<\alpha_n<1-\alpha<1+\alpha<\beta_n$ for some sequence of positive reals $\{\alpha_n\}_{n\geq 1}$ and $\{\beta_n\}_{n\geq 1}$. The annuli $B_n$ are the sets in which the iterates of all the points of $U$ eventually lie, and the union of these $B_n$s acts as an `absorbing set' for $f$. More precisely,  Rippon and Stallard proved the following.
	\begin{theorem}(Theorem 1.3, \cite{BergRippStall 2013}\label{Thm 2.14})
			Let $U$ be a BWD of an entire function $f$ and $z_0\in U$. Then for each compact subset $C$ of $U$, $f^n(C)\subset B_n$ for all large $n$ where $B_n$ is defined as above. 
	\end{theorem}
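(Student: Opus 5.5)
Given a compact set $C\subset U$, the plan is to reduce to a single point $z_0$ together with Theorem~\ref{increasing-annulus}, and then control the distortion of $f^n$ on $C$ with a hyperbolic-metric estimate. First, I would note that $C$ may be assumed connected. Since $U$ is a domain, we can enlarge $C$ to a compact connected set $K\subset U$ containing $C\cup\{z_0\}$, for instance by joining finitely many points with finitely many paths. The hyperbolic diameter $d$ of $K$ in $U$ is then finite. By the contraction of the hyperbolic metric (Schwarz--Pick), every $f^n(K)$ has hyperbolic diameter at most $d$ in $U_n$. So for each $z\in K$, the points $f^n(z)$ and $f^n(z_0)$ stay within hyperbolic distance $d$ in $U_n$.

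Next, I would convert this hyperbolic bound into a bound on $\log|f^n(z)|/\log|f^n(z_0)|$. Put $r_n=|f^n(z_0)|$. By Theorem~\ref{increasing-annulus}, applied with $B=U$, there is $\alpha\in(0,1)$ such that $A_n:=A(r_n^{1-\alpha},r_n^{1+\alpha})\subset U_n$ for large $n$; in particular $A_n\subset B_n$. The hyperbolic metric of $U_n$ is at most that of $A_n$, so I would work in $A_n$. Using the logarithmic coordinate $w=\log z$, the annulus $A_n$ becomes a vertical strip of width $2\alpha\log r_n$ centred at $\log r_n$. A hyperbolic ball of fixed radius $d$ around the centre line of this strip occupies a proportion $\theta(d)<1$ of the half-width, with $\theta$ independent of $n$. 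Strictly, the ball should be taken in $U_n$ rather than in $A_n$. The comparison therefore has to go the other way: points of $U_n$ close to $f^n(z_0)$ in the $U_n$-metric are at least as close in the $A_n$-metric only if they lie in $A_n$. I would handle this by a continuity argument along a path. Join $z_0$ to $z$ by a path in $K$; its image under $f^n$ has $U_n$-length at most $d'$, where $d'$ is the hyperbolic length in $U$, and hence $A_n$-length at most $d'$ for as long as it stays in $A_n$. Because the $A_n$-distance from the centre circle to the boundary is infinite, the image path can never leave $A_n$. The same argument gives more: it stays in the sub-annulus $A(r_n^{1-\theta\alpha},r_n^{1+\theta\alpha})$.

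Finally, $B_n$ is the maximal centred annulus in $U_n$ containing $f^n(z_0)$, and it contains $A_n$. Hence $f^n(C)\subset f^n(K)\subset A_n\subset B_n$ for all large $n$, which is the claim. The main obstacle I expect is the uniform hyperbolic estimate in the middle step. One has to check that the hyperbolic density of the round annulus $A(R^{1-\alpha},R^{1+\alpha})$ along its core circle is comparable to $1/(|z|\,\alpha\log R)$, and that the distance to the boundary circles is infinite. Both facts follow from the explicit density of a strip after the logarithmic change of variables, with constants independent of $n$. It is also worth noting that the proof does not actually need $\theta<1$: mere containment of the image in $A_n$ suffices for $B_n$. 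The quantitative refinement is only needed if one wants the sharper sub-annulus statement.
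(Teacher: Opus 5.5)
\textbf{There is a genuine gap.} Your first step is sound. With $d=\max_{z\in C}d_U(z,z_0)<\infty$, Schwarz--Pick gives $d_{U_n}(f^n(z),f^n(z_0))\le d$ for all $z\in C$, and you do not even need $K$ connected. The problem is the middle step, where the comparison of hyperbolic metrics runs the wrong way. Since $A_n\subset U_n$, monotonicity gives $\rho_{U_n}\le\rho_{A_n}$ on $A_n$. So the $A_n$-length of a path in $A_n$ is \emph{at least} its $U_n$-length. The bound $d'$ on the $U_n$-length of $f^n\circ\gamma$ therefore gives no bound on its $A_n$-length. Your sentence ``hence $A_n$-length at most $d'$ for as long as it stays in $A_n$'' is false, and the continuity argument that the image path never leaves $A_n$ fails with it. An annulus \emph{inside} $U_n$ can only give upper bounds for $U_n$-distances. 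To confine $f^n(C)$ you need lower bounds, and these must come from points of $\mathbb{C}\setminus U_n$. Your intermediate target $f^n(K)\subset A_n$ is also stronger than the theorem and should not be expected. Here $\alpha$ is just some number supplied by Theorem~\ref{increasing-annulus}, and $A_n=A(r_n^{1-\alpha},r_n^{1+\alpha})$ is symmetric about $\log r_n$ on the logarithmic scale, whereas $B_n$ need not be.

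\textbf{How to repair it.} Use the maximality of $B_n=A(r_n^{\alpha_n},r_n^{\beta_n})$. Each boundary circle of $B_n$ contains a point of $\mathbb{C}\setminus U_n$, say $\zeta_n$ with $|\zeta_n|=r_n^{\alpha_n}$ and $\xi_n$ with $|\xi_n|=r_n^{\beta_n}$. Also $0\notin U_n$. Since $U_n\subset\mathbb{C}\setminus\{0,\zeta_n\}$, the standard estimate $\rho_{\mathbb{C}\setminus\{0,1\}}(w)\ge c/\bigl(|w|(k+|\log|w||)\bigr)$, with absolute constants $c,k>0$, gives $\rho_{U_n}(w)\ge c/\bigl(|w|(k+|\log(|w|/r_n^{\alpha_n})|)\bigr)$. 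A path in $U_n$ from $f^n(z_0)$ to a point of modulus at most $r_n^{\alpha_n}$ meets every circle $|w|=t$ with $r_n^{\alpha_n}\le t\le r_n$. Integrating in $t$, its $U_n$-length is at least $c\log\bigl(1+(1-\alpha_n)\log r_n/k\bigr)\ge c\log(1+\alpha\log r_n/k)$. The same bound holds for paths reaching modulus at least $r_n^{\beta_n}$, using $\xi_n$ and $\beta_n-1>\alpha$. Since $r_n\to\infty$, the $U_n$-distance from $f^n(z_0)$ to $U_n\setminus B_n$ tends to infinity. It therefore exceeds $d$ for large $n$, which gives $f^n(C)\subset B_n$. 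Theorem~\ref{increasing-annulus} is needed only to ensure $1-\alpha_n>\alpha$ and $\beta_n-1>\alpha$.
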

		\par Bergweiler et al. provided a necessary and sufficient condition for the existence of BWD for meromorphic functions having  a direct singularity over $\infty$. Such functions can have infinitely many
	poles.
	\begin{theorem}{(Theorem 1.3, \cite{BergRippStall 2008})}
		Let f be a meromorphic function with a direct singularity over $\infty$. Then $f$ has BWD if and only if all the components of $I(f)\cap J(f)$ are bounded.
	\end{theorem}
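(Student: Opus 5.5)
Both directions turn on one observation: a BWD separates $\infty$ from everything bounded. The converse direction is where the work lies.

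\emph{($\Rightarrow$).} Suppose $U$ is a BWD. Each $U_n$ is bounded, surrounds $0$ for large $n$, and $\mathrm{dist}(U_n,0)\to\infty$. Hence $U_n$ contains a Jordan curve $\gamma_n$ surrounding $0$ with $\min_{\gamma_n}|z|\to\infty$. Let $K$ be a component of $I(f)\cap \mathcal{J}(f)$. Since $K$ is a connected subset of the Julia set, it is disjoint from every $\gamma_n$. So for each $n$, $K$ lies either inside or outside $\gamma_n$. If $K$ were unbounded it would meet the exterior of every $\gamma_n$, hence lie outside all of them. But $\bigcup_n \mathrm{int}(\gamma_n)=\mathbb{C}$, so this is impossible for a nonempty set. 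Therefore every component of $I(f)\cap\mathcal{J}(f)$ is bounded. This direction does not use the direct singularity.

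\emph{($\Leftarrow$).} Assume every component of $I(f)\cap\mathcal{J}(f)$ is bounded. I will use the direct singularity over $\infty$ as a replacement for the maximum modulus, via the Bergweiler--Rippon--Stallard theory. Fix a direct tract $D$: a component of $\{|f|>R\}$ containing no poles, with $|f|=R$ on $\partial D$. Then $v=\log|f/R|$, extended by $0$ outside $D$, is subharmonic. This gives $M_D(r)=\max_{|z|=r,\,z\in D}|f(z)|$ and an analogue $A_D(f)$ of the fast escaping set. The key input, modelled on Rippon--Stallard's result that every component of $A(f)$ is unbounded, is that $A_D(f)$ is nonempty and its components (or a spider's web structure) are unbounded. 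A result of this kind is in BRS 2008, where they show points escaping in $D$ form unbounded continua meeting $\mathcal{J}(f)$.

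The plan is as follows.
\begin{enumerate}
\item Show that $I(f)\cap\mathcal{J}(f)\neq\emptyset$, and that $I(f)$ contains an unbounded connected set $E$ built from the fast escaping points in $D$, with $E\cap\mathcal{J}(f)\neq\emptyset$.
\item Take a bounded component $K$ of $I(f)\cap\mathcal{J}(f)$. Every point near $K$ that is not in $I(f)\cap\mathcal{J}(f)$ is either in the Fatou set or non-escaping. Using the unbounded continua of escaping points from step 1, which must pass around $K$ without meeting it inside $\mathcal{J}(f)$, deduce that $K$ is separated from $\infty$ by a closed curve in the Fatou set. The tool here is a Janiszewski-type separation: a compact component of a closed-in-$\mathbb{C}$ set is separated from the rest by a curve in the complement. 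This needs $I(f)\cap\mathcal{J}(f)$ to behave like a closed set near $K$, which I will obtain by working with the closed sets $\{z:|f^n(z)|\ge R_n\ \forall n\}\cap\mathcal{J}(f)$.
\item Show that a Fatou component $V$ carrying such a curve must lie in $I(f)$. It cannot be a periodic component whose iterates stay bounded, because $K$ is inside the curve and contains escaping Julia points. Iterating with the direct tract estimates, I will produce multiply connected $V_n$ surrounding $0$ with $\mathrm{dist}(V_n,0)\to\infty$.
\item Exclude the possibility that $V$ is periodic. A multiply connected Baker domain would have bounded growth of $\log\log|f^n|$ (Lemma~\ref{growth-on-periodicFC}), contradicting the fast growth along the direct tract. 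So $V$ is wandering, hence a BWD.
\end{enumerate}

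The main obstacle is step 2. With infinitely many poles, a bounded component of $I(f)\cap\mathcal{J}(f)$ need not be separated from $\infty$ by a Fatou curve; it could instead be surrounded by non-escaping Julia points. To handle this I will first try to show that, under the direct-singularity hypothesis, the set of escaping points near $\infty$ is dense enough in $\mathcal{J}(f)$ to force a spider's-web structure. If $I(f)\cap\mathcal{J}(f)$ contained no spider's web, then an unbounded component would arise from BRS's construction of escaping curves in $D$, contradicting the hypothesis.
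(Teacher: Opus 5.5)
Your ($\Rightarrow$) argument is correct and complete. The Jordan curves $\gamma_n\subset U_n$ with $\min_{\gamma_n}|z|\to\infty$ have interiors exhausting $\mathbb{C}$, so every connected subset of $\mathcal{J}(f)\cap\mathbb{C}$ is bounded, and you are right that no direct singularity is used. (The survey only cites this theorem from Bergweiler--Rippon--Stallard, so there is no proof in the paper to compare against.)

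The converse, however, is not proved, and its decisive step fails as set up. In Step 2, a Janiszewski/Zoretti-type separation applied to a closed set $F\subset I(f)\cap\mathcal{J}(f)$ gives a Jordan curve in $\mathbb{C}\setminus F$. That complement still contains Julia points: repelling periodic points, which are non-escaping and dense in $\mathcal{J}(f)$, and any escaping Julia points that escape more slowly than your $R_n$. So nothing puts the curve in the Fatou set. Moreover, with infinitely many poles the set $\{z:|f^n(z)|\ge R_n~\forall n\}$ need not even be closed, since prepoles accumulate on it. Your proposed remedy is not an argument either. A spider's web contained in $I(f)\cap\mathcal{J}(f)$ would itself be an unbounded connected subset, contradicting your own hypothesis. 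The other branch (``otherwise BRS's construction gives an unbounded component of $I(f)\cap\mathcal{J}(f)$'') is exactly the implication to be proved: the tract construction yields unbounded continua in $I(f)$, not in $\mathcal{J}(f)$.

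Step 3 is false as stated once poles are present. In Example~\ref{Exm 3}, the circle $|z|=2a$ lies in the immediate basin of the superattracting fixed point $0$ of $F(z)=\frac{z}{z-a}g(z)$ and surrounds the simple pole $a$. Since $F$ maps a neighbourhood of $a$ conformally onto a neighbourhood of $\infty$, points of $I(F)\cap\mathcal{J}(F)$ accumulate at $a$. So an invariant, non-escaping Fatou component carries a curve surrounding escaping Julia points. The maximum principle you implicitly use fails because $f^n$ has poles inside the curve. For the same reason a multiply connected Fatou component need not be a BWD (compare Dom\'inguez's examples in Section~\ref{meromorphic}). Step 4 then presupposes fast growth on $V$ that was never tied to the tract.

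The missing idea is to argue by contraposition. Assume $f$ has no BWD, take the unbounded continuum $\Gamma\subset I(f)$ of points escaping at a uniform fast rate inside the direct tract, and show that $\Gamma$ can be taken inside $\mathcal{J}(f)$, or modified to lie there. This needs a lemma controlling the Fatou components that $\Gamma$ meets. For instance, you would show that they are wandering domains lying in $I(f)$ together with their boundaries; periodic ones are excluded by Lemma~\ref{growth-on-periodicFC} together with the fast escape. You would then show that, when none of them is a BWD, they can be excised and replaced by boundary pieces without destroying connectedness or unboundedness. That is where both the direct singularity and the absence of BWDs genuinely enter, and your plan never uses the latter at any such point.
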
	
	
	\subsection{Eventual connectivity}
	\par
We begin with a word on Fatou components. If $U$ is a Fatou component of a meromorphic function $f$ with BWD then the Fatou component $U_n$ containing $f^n (U)$ is exactly $f^n(U)$. This is because every point of $ U_n \setminus f^n(U)$ is a finite asymptotic value of $f$ (see Theorems 1 and 2, \cite{Herring-1998}). But no finite asymptotic value can  exist in presence of a BWD (Proposition~\ref{unbounded-BWD}).  It is important to note that $U$ is not necessarily a BWD. In fact, for a meromorphic function $f$ with BWD, if $U$ is any Fatou component of $f$ then $f: U \to U_1$ is proper.
This is one of the reason why the tools developed to study dynamics of rational maps can be expected to be used for investigating the dynamics of functions with BWD. One such tool is the Riemann-Hurwitz formula. For a BWD $U$, if $d_n$ is the degree of the proper map $f^n: U\to U_n$ and the  connectivities of $U$ and $U_n$  are finite then by the Riemann-Hurwitz formula we have for each $n$, 
	$$c(U)-2=d_n\left(c(U_n)-2\right)+c_f$$ where $c_f$ denotes the number of critical points of $f^n$ in $U_n$ counting multiplicity, and   $c(U)$ and $c(U_n)$ denote the connectivities of $U$ and $U_n$ respectively. It follows that $c(U_n)\leq c(U)$ for all $n $, which gives rise to a natural question: How does the connectivity of a BWD evolve under the iteration of $f$, i.e., what happens to the sequence $c(U_n)$ as $n \to \infty$?  
\begin{definition}[Eventual connectivity]
	 A natural number $c$ is called the eventual connectivity of a BWD $U$ if  $c(U_n)=c$ for all sufficiently large $n$. 
\end{definition}

We first discuss Baker's example briefly. The question of whether the connectivity of the BWD appearing in Example~\ref{1st exm} is finite or not was raised by
		Baker himself in \cite{Baker 1988} and  by Kisaka and Shishikura in \cite{KisakaShishikura 2008}. Later in 2011, Bergweiler and Zheng proved that the connectivity of the BWD in the above example is infinite by using Lemma 2.2 and Theorem 1.2 of \cite{BergZheng 2011} (see Section 6, \cite{BergZheng 2011} for a similar example). 
	
	\begin{theorem}(Theorem A, \cite{KisakaShishikura 2008})\label{connectivity}
		Let  an entire function $f$ have  a BWD $U$. 
		\begin{enumerate}
			\item If $c(U_k)=\infty$ for some $k$ then the eventual connectivity of $c(U_k)$ is $\infty$.
			
			\item If $c(U_k)<\infty$ for some $k$  then the eventual connectivity of $U_k$ is $2$. Moreover, if 
		 $c(U_k)=2$ for some $k$  then $\cup_{n=k}^{\infty}U_n$ does not contain any critical point of $f$.
		\end{enumerate}
	\end{theorem}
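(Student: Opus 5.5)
The proof rests on the Riemann--Hurwitz formula for the proper maps $f:U_n\to U_{n+1}$, as recalled in the discussion of eventual connectivity, together with Theorem~\ref{connectivity-criticalpoints}. Every $U_n$ is a bounded Fatou component of $f$, so $f:U_n\to U_{n+1}$ is a proper map of some finite degree $d_n\geq 1$. I would use the form valid for finitely or infinitely connected domains: $c(U_n)-2=d_n\bigl(c(U_{n+1})-2\bigr)+N_n$, where $N_n$ is the number of critical points of $f$ in $U_n$ counted with multiplicity. If the right side is infinite, then $c(U_n)=\infty$. I also use the standard monotonicity fact: if $c(U_{n+1})=\infty$, then $c(U_n)=\infty$, since a proper map cannot decrease connectivity.

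For part 1, suppose $c(U_k)=\infty$. By Theorem~\ref{connectivity-criticalpoints}(3), applied to the BWD $U_k$, the set $\bigcup_{n>k}U_n$ contains infinitely many critical points. The same holds for every tail $\bigcup_{n>m}U_n$ with $m\geq k$, because each $U_n$ is bounded. A bounded domain on which $f'\not\equiv 0$ contains only finitely many critical points, so infinitely many critical points cannot sit in finitely many of the $U_n$. Applying Theorem~\ref{connectivity-criticalpoints}(3) to $U_m$ then gives $c(U_m)=\infty$ for all $m\geq k$, so the eventual connectivity is $\infty$.

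For part 2, suppose $c(U_k)<\infty$. By part 1 (contrapositive) together with monotonicity, $c(U_m)<\infty$ for all $m\geq k$. Theorem~\ref{connectivity-criticalpoints}(1) and (2) then show that $\bigcup_{n>k}U_n$ contains only finitely many critical points. Hence there is $m_0$ such that $U_n$ contains no critical point for $n\geq m_0$. By Theorem~\ref{connectivity-criticalpoints}(1), $c(U_m)=2$ for all $m\geq m_0$, which gives eventual connectivity $2$.

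For the last claim, suppose $c(U_k)=2$. Theorem~\ref{connectivity-criticalpoints}(1) immediately gives that $\bigcup_{n>k}U_n$ contains no critical point. It remains to treat $U_k$ itself. I would apply Riemann--Hurwitz to $f:U_k\to U_{k+1}$. Here $c(U_{k+1})\le c(U_k)=2$, and $U_{k+1}$ is multiply connected, so $c(U_{k+1})=2$. The formula becomes $0=d_k\cdot 0+N_k$, hence $N_k=0$. Thus $\bigcup_{n\ge k}U_n$ is free of critical points.

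The step I expect to be the main obstacle is justifying that Theorem~\ref{connectivity-criticalpoints} applies to each $U_m$, in particular that the index ranges of ``critical points in the forward orbit'' match the shifted statements. The extended Riemann--Hurwitz in the infinitely connected case also needs care. If the citation is insufficient, I would argue directly by exhausting $U_n$ with finitely connected subdomains.
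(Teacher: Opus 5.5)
Your proof is correct, but it takes a genuinely different and much heavier route than the one the paper points to.

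\textbf{What the paper offers.} The paper gives no proof of this theorem; it is quoted from Kisaka--Shishikura. The only ingredients the surrounding text points to are the Riemann--Hurwitz monotonicity $c(U_n)\le c(U)$ and Bolsch's result that, under a proper map, infinite connectivity of the domain and of the image are equivalent.

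\textbf{Your route.} Your derivation from Theorem~\ref{connectivity-criticalpoints} is logically sound, and your care with indices matters. As printed, with $\bigcup_{n=1}^{\infty}U_n$, the ``if'' half of item (1) is false. In the Theorem~C examples, the last $U_j$ with $c(U_j)>2$ has $c(U_n)=2$, hence no critical points, for all $n>j$. So the correct reading is $\bigcup_{n\ge 0}U_n$. You only apply (1) to domains $U_m$ that themselves contain no critical point, so your argument holds under either reading.

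The cost is that you derive an essentially elementary statement from the two hardest implications of Theorem~\ref{connectivity-criticalpoints}: infinite connectivity forces infinitely many critical points, and absence of critical points forces double connectivity. Neither follows from Riemann--Hurwitz alone. Also, as a proof of Theorem~A, your argument is only legitimate if the proof of Theorem~\ref{connectivity-criticalpoints} does not itself use Theorem~A, which it postdates and cites.

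\textbf{The direct route.}
\begin{itemize}
\item For part 1, read your own formula the other way round. Since $d_n$ and $N_n$ are finite, a finitely connected $U_{n+1}$ would force $U_n$ to be finitely connected; this is Bolsch's theorem. Hence $c(U_n)=\infty$ for all $n\ge k$ by induction.
\item For part 2, the integers $c(U_n)$, $n\ge k$, are non-increasing and at least $2$, so they are eventually equal to some $c$. If $c>2$, then $c-2=d_n(c-2)+N_n$ forces $d_n=1$ and $N_n=0$ for all large $n$. Then $f^m\colon U_n\to U_{n+m}$ would be conformal. That is impossible: $U_{n+m}$ contains round annuli $A(r_m,R_m)$ with $R_m/r_m\to\infty$ (Theorem~\ref{Thm15}). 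Their conformal preimages would be annuli in $U_n$ separating two of its finitely many non-degenerate complementary components, and such annuli have bounded modulus.
\item The ``moreover'' claim is then exactly your Riemann--Hurwitz computation, applied at every $n\ge k$ rather than only at $n=k$.
\end{itemize}

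\textbf{Comparison.} The direct route needs no information about critical points at all. Yours has the merit of showing Theorem~A as a corollary of the finer critical-point characterization.
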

   It follows from a result of Bolsch (Theorem $3$, \cite{Bolsch 1999}) that if $f: U \to V$ is a proper analytic map for two domains $U,V$ then $V$ is infinitely connected if and only if $U$ is infinitely connected. This gives that the eventual connectivity of a BWD $U$ is $\infty$ if and only if all the BWDs in the grand orbit of $U$ are infinitely connected. However, for each $m$, there are BWDs with connectivity $m$ such that its eventual connectivity is $2$ (Theorem C, \cite{KisakaShishikura 2008}). In the same paper, Kisaka and Shishikura   gave the first example of an entire function having a BWD whose eventual connectivity is $2$ (see Theorem $B$). Using Theorem~\ref{connectivity},  the eventual connectivity of the BWD discussed in Example \ref{2nd exm} is determined.
	\begin{corollary} The BWDs discussed in Example~\ref{2nd exm} are infinitely connected.
	\label{SecondExample-eventualconnectivity}
\end{corollary}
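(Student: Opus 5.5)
The plan is to combine what Example~\ref{2nd exm} already gives with the two structural results, Theorem~\ref{connectivity-criticalpoints} and Theorem~\ref{connectivity}. Let $g$ be the function of Example~\ref{2nd exm}, and fix $n$ large enough that the annulus $A_n=\{z:r_n^2<|z|<s_n\}$ satisfies $g(A_n)\subset A_{n+1}$. Let $U$ be the BWD containing $A_n$, and let $U_k$ denote the BWD containing $A_{n+k}$.

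\textbf{Step 1: infinitely many critical points in the forward orbit.} In the proof of Example~\ref{2nd exm} it was shown that, for every sufficiently large $m$, $\phi(-s_m)>0$ and $\phi(-r_m^2)<0$. Since $\phi$ is continuous on $[-s_m,-r_m^2]\subset(-r_{m+1},-r_m)$, it has a root there, and every root of $\phi$ is a zero of $g'$. The point lies on the negative real axis with modulus between $r_m^2$ and $s_m$, so it belongs to $A_m$ and hence to $U_{m-n}$. Because the $U_k$ are pairwise distinct (they are wandering), $\bigcup_{k\ge 1}U_k$ contains infinitely many distinct critical points of $g$.

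\textbf{Step 2: infinite connectivity.} Part~(3) of Theorem~\ref{connectivity-criticalpoints} now gives $c(U)=\infty$. The same argument applies to every $U_j$, since the forward orbit of $U_j$ still contains infinitely many of these critical points; thus $c(U_j)=\infty$ for all $j$. Alternatively, part~(1) of Theorem~\ref{connectivity} gives the same conclusion, because $c(U)=\infty$ forces the eventual connectivity to be $\infty$. The result of Bolsch quoted above then extends infinite connectivity to every BWD in the grand orbit of $U$, including preimage components $W$ with $W_m=U_k$: since $g^m:W\to U_k$ is proper, $W$ is infinitely connected as well.

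\textbf{Possible obstacle.} The only point needing care is Step~1: one must check that the root of $\phi$ found in $(-s_m,-r_m^2)$ lies in the Fatou component containing $A_m$, and not just near it. This holds because $A_m$ is an open round annulus containing that interval of the negative real axis, so no further estimates are needed. Everything else is a direct application of results already stated.
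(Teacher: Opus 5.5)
Your argument is correct, and it is exactly how the paper obtains the identically worded Corollary~\ref{SecondExample-infinitelyconnected} from Theorem~\ref{connectivity-criticalpoints}(3). For this occurrence, however, the paper argues through Theorem~\ref{connectivity}, and through part (2) rather than part (1). If some $U_k$ had finite connectivity, part (2) would make its eventual connectivity $2$. Some $U_m$ would then be doubly connected, so $\bigcup_{n\ge m}U_n$ would contain no critical point of $g$. This contradicts the zero of $\phi$ in $(-s_j,-r_j^2)\subset A_j$ for every large $j$.

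Both routes use the same input from Example~\ref{2nd exm}. Since each bounded $U_n$ contains only finitely many critical points, ``infinitely many critical points in $\bigcup_n U_n$'' and ``a critical point in $U_n$ for arbitrarily large $n$'' are the same hypothesis.

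Your route is direct. The implication of Theorem~\ref{connectivity-criticalpoints}(3) that you need is essentially the Riemann--Hurwitz inequality $c(U)-2\ge N_n$ when $c(U)<\infty$, where $N_n$ counts the critical points of $g^n$ in $U$ (using $c(U_n)\ge 2$). It applies verbatim to each $U_j$, and also to any BWD $W$ in the grand orbit, since the forward orbit of $W$ eventually runs through the $U_k$. So your appeal to Bolsch is dispensable.

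The paper's route is a contradiction argument resting on the deeper Kisaka--Shishikura dichotomy (eventual connectivity $2$ or $\infty$). In exchange, it only has to exclude a single doubly connected $U_m$, and it is what motivates phrasing the result in terms of eventual connectivity.
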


In 2008, Rippon and Stallard generalized the work of Kisaka and Shishikura for meromorphic functions with finitely many poles.
	\begin{theorem}(Theorem 3, \cite{RipponStallard 2008})
		Let $f$ be a meromorphic function with finitely many poles and $U$ be a wandering domain of $f$.
		\begin{enumerate}
			\item If $U$ is a BWD then the eventual connectivity of $U$ is either $2$ or $\infty$.
			\item If $U$ is not a BWD, then the eventual connectivity of $U$ is $1$.
		\end{enumerate}
		\label{eventual-mero}
	\end{theorem}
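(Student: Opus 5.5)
The plan is to treat the two parts separately. Throughout, I use the facts already collected: for a function with a BWD every Fatou component $V$ satisfies $f(V)=V_1$ and $f:V\to V_1$ is proper (no finite asymptotic values, by Proposition~\ref{unbounded-BWD}). The Riemann--Hurwitz formula $c(V)-2=d(c(V_1)-2)+N$ then gives $c(V_1)\le c(V)$ whenever these are finite. Bolsch's theorem gives $c(V)=\infty$ if and only if $c(V_1)=\infty$.

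\textbf{Part (1).} Let $U$ be a BWD. If some (hence every) $U_n$ is infinitely connected, the eventual connectivity is $\infty$ by Bolsch's theorem. Otherwise every $c(U_n)$ is finite and non-increasing, so it stabilises at some value $c\ge 2$; the bound $c\ge2$ holds because $U_n$ is multiply connected for large $n$. The task is to exclude $2<c<\infty$.

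Suppose $c(U_n)=c\in(2,\infty)$ for $n\ge N$. Riemann--Hurwitz with $c(U_n)=c(U_{n+1})=c$ gives $c-2=d_n(c-2)+N_n$, which forces $d_n=1$ and $N_n=0$. So $f:U_n\to U_{n+1}$ is a conformal homeomorphism for all $n\ge N$. Each $U_n$ has exactly one complementary component containing $\infty$ and at least one bounded complementary component not containing $0$. I would argue as follows: the bounded complementary components of $U_n$ that do not surround the origin contain Julia points and hence preimages of poles, or points mapped near $\infty$. Alternatively, one can use Theorem~\ref{Thm15} (Zheng). A non-null-homotopic curve in $U_n$ surrounding such an inner hole is mapped under $f^k$ into an annulus $A(r_k,R_k)\subset U_{n+k}$ with $R_k/r_k\to\infty$. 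Since $f^k$ is injective on $U_n$, the hyperbolic (or extremal-length) geometry of $U_n$ is preserved, so the modulus of the largest round annulus separating the boundary components is bounded in $U_{n+k}$ by conformal invariance. This contradicts $R_k/r_k\to\infty$, because a curve around a small hole can only be conformally mapped into a curve of bounded modulus class. A cleaner route would be to show that a conformal map $U_n\to U_{n+k}$ of a finitely connected domain with $c\ge3$ cannot carry two distinct complementary components to ones separated by annuli of unbounded modulus. The main obstacle is making this contradiction precise. I would first try using that the modulus of the annulus $f^k(B)$ in Theorem~\ref{Thm15} is at most the modulus of a fixed doubly connected subdomain of $U_n$, since conformal maps preserve modulus. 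Taking $B$ a relatively compact annular neighbourhood of a curve surrounding only a non-central hole gives a finite modulus, which contradicts $R_k/r_k\to\infty$.

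\textbf{Part (2).} Let $U$ be a wandering domain that is not a BWD. By Theorem~\ref{MC-IFF-BWD-FP}(1), $U_n$ is multiply connected for only finitely many $n$, so $U_n$ is simply connected for all large $n$, and the eventual connectivity is $1$. Note that the finite-pole setting is exactly what that theorem requires.
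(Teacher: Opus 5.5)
The survey gives no proof of this theorem; it only quotes it from Rippon--Stallard. So there is no in-paper argument to compare with. Judged on its own, your proof is correct in substance. It assembles the result from facts the survey does record: $f(V)=V_1$ with $f:V\to V_1$ proper, Riemann--Hurwitz, Bolsch's theorem, Zheng's annulus theorem (Theorem~\ref{Thm15}) and Theorem~\ref{MC-IFF-BWD-FP}.

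Your key step is right. If $c(U_n)=c\in(2,\infty)$ for $n\ge N$, Riemann--Hurwitz forces $d_n=1$, so every $f^k:U_n\to U_{n+k}$ is univalent. That is incompatible with $f^k(B)$ containing round annuli $A(r_k,R_k)$ with $R_k/r_k\to\infty$.

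Three things need tightening.
\begin{enumerate}
\item The modulus comparison needs $A(r_k,R_k)$ to be \emph{essential} in the doubly connected domain $f^k(B)$. This holds because its core circle surrounds $0\notin U_{n+k}$, so it is not null-homotopic in $f^k(B)\subset U_{n+k}$. Monotonicity of modulus then gives $\frac{1}{2\pi}\log(R_k/r_k)\le \mathrm{mod}\, f^k(B)=\mathrm{mod}\, B<\infty$, which is the contradiction.
\item In Part~(2), Theorem~\ref{MC-IFF-BWD-FP} is stated only for \emph{multiply connected} wandering domains, so you cannot apply it to $U$ itself when $U$ is simply connected. Apply it instead to a multiply connected $U_{n_0}$ if one exists; otherwise there is nothing to prove. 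Being a BWD depends only on the tail of the orbit, so $U_{n_0}$ is not a BWD either.
\item Discard your first heuristic sketch (Julia points in the holes, ``the largest round annulus is bounded by conformal invariance''). As written it is not an argument; the modulus comparison replaces it.
\end{enumerate}

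Restricting $B$ to a neighbourhood of a curve around a \emph{non-central} hole is also unnecessary. An annular neighbourhood of a Jordan curve in $U_n$ surrounding $0$ works verbatim, and uses Zheng's theorem only in its most basic case. This makes the mechanism clear. Zheng's theorem forbids $f|_{U_n}$ from being univalent for all large $n$. Riemann--Hurwitz forces univalence exactly when the finite connectivity stabilises above $2$; for $c=2$ it allows $d_n>1$, which is why $2$ survives.

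If you prefer to avoid Zheng's theorem, the argument principle does the same job more elementarily. First, $f$ has infinitely many zeros: otherwise $f=Re^{g}$ with $R$ rational and $g$ non-constant entire, and then $0$ is a finite asymptotic value, which Proposition~\ref{unbounded-BWD} excludes. Now take $n$ large and a Jordan curve $\gamma_n\subset U_n$ surrounding $0$. Then $\mathrm{ind}(f(\gamma_n),0)=Z_n-P_n\to\infty$, where $Z_n$ and $P_n$ count zeros and poles of $f$ inside $\gamma_n$. But univalence of $f$ on $U_n$ would make $f(\gamma_n)$ a Jordan curve, with $|\mathrm{ind}(f(\gamma_n),0)|\le 1$.
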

 To the best of our knowledge, Example~\ref{BWD-infinitelymanypoles} is the only example  of a meromorphic function with infinitely many poles that has a BWD whenever $k >2$. Here is a question on this example.
 
 \begin{Question}
 What is the eventual connectivity of the BWD discussed in Example~\ref{BWD-infinitelymanypoles}? 
 \label{EC-Infinitely-many-poles}
 \end{Question}  
 To answer Question~\ref{EC-Infinitely-many-poles}, one has to locate the critical points.   
 \par 
 It is pointed out in \cite{RipponStallard 2008} (page - $408$) that, for any given  $k \geq 2$,  a meromorphic function can be constructed having a BWD with eventual connectivty $k$. These functions must be having infinitely many poles in view of Theorem~\ref{eventual-mero}.
	\section{Functions without any Baker wandering domain}
This section discusses several conditions  ensuring non-existence of BWDs. The first such condition was provided by Baker himself in 1984. He proved that if an entire function  $f$ is bounded on an unbounded curve, then $f$ has no BWD (Corollary, page - 565, Section 3, \cite{Baker 1984}). That this is also true for meromorphic functions follows from Proposition~\ref{unbounded-BWD}.
Later in 1993, Bergweiler generalized this result for entire functions by weakening the hypothesis. 
	\begin{theorem}(Theorem 10, \cite{Berg 1993})\label{Thm 4.2}
		Let $f$ be an entire function and for each $\epsilon>0$ there exists an unbounded curve $\gamma$ such that $|f(z)|\leq M(|z|^{\epsilon}, f)$ for $z\in\gamma$, then all the Fatou components of $f$ are simply connected. In particular, there is no BWD for $f$.
	\end{theorem}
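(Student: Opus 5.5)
Suppose, for contradiction, that $f$ has a multiply connected Fatou component $U$. By Theorem~\ref{Thm1}, $U$ is a BWD and $f^n\to\infty$ on $U$. For all large $n$, $U_n$ contains a closed curve $\gamma_n$ with nonzero winding number about $0$ and $\operatorname{dist}(\gamma_n,0)\to\infty$. I will prove that such growth forces $|f|$ to be large, in the sense of $M(|z|^{\epsilon},f)$, on every unbounded curve. This contradicts the hypothesis for a suitably small $\epsilon$, and hence every Fatou component is simply connected.

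The key input is the annulus structure of Theorem~\ref{increasing-annulus}. Fix $z_0\in U$ and put $r_n=|f^n(z_0)|$. That theorem gives $0<\alpha<1$ such that, for large $n$, $U_n\supset A(r_n^{1-\alpha},r_n^{1+\alpha})$. The next step is to bound $f$ from below on this annulus. The bounded component of $\mathbb C\setminus U_{n+1}$ contains $0$ and the points of the inner boundary of $U_n$. Since $f(U_n)\subset U_{n+1}$ and $U_{n+1}$ surrounds $U_n$, I would apply the maximum modulus principle on the disc bounded by an outer curve of $U_n$. This should show that, on the annulus $A(r_n^{1-\alpha/2},r_n^{1+\alpha/2})$, $|f(z)|$ is at least comparable to $M(r_n^{1-\alpha},f)$. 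The reason is that $f$ maps the round circle $|z|=r_n^{1-\alpha/2}$, which winds around $0$ in $U_n$, into $U_{n+1}$, and $U_{n+1}$ lies outside the image of the smaller disc.

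More concretely, I would use the following standard fact (Rippon--Stallard, from the proof of Theorem~\ref{increasing-annulus}). For $z$ in the core annulus of $U_n$ one has $\log|f(z)|\geq (1-o(1))\log M(|z|,f)$. The heuristic is Hadamard convexity together with a Harnack estimate for the positive harmonic function $\log|f|-\log|f|_{\partial\text{inner}}$ on the large annulus inside $U_n$. Granting this, pick $\epsilon<\alpha/4$ and let $\gamma$ be the unbounded curve given by the hypothesis. Because every $U_n$ separates $0$ from $\infty$, $\gamma$ meets the circle $|z|=r_n$ at some point $z_n\in U_n$ for all large $n$. At $z_n$ we get $\log|f(z_n)|\geq \tfrac12\log M(r_n,f)$. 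Convexity of $\log M$ in $\log r$ then gives $M(r_n,f)^{1/2}>M(r_n^{\epsilon},f)$ for large $n$, since $\log M(r,f)/\log r\to\infty$. This contradicts $|f(z_n)|\le M(|z_n|^{\epsilon},f)$.

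The main obstacle is the lower bound for $|f|$ on a large round sub-annulus of $U_n$. I would obtain it by combining the Harnack inequality on $A(r_n^{1-\alpha},r_n^{1+\alpha})$ with the fact that $f$ maps the whole annulus into $U_{n+1}$, which lies beyond $M(r_n^{1-\alpha},f)$ by the maximum principle. Passing from that bound to $M(r_n,f)^{c}$ is what requires the Hadamard three circles estimate.
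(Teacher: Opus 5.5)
Your reduction and your final step are fine. A multiply connected component would be a BWD by Theorem~\ref{Thm1}. Theorem~\ref{increasing-annulus} with $B=U$ puts $A(r_n^{1-\alpha},r_n^{1+\alpha})$ inside $U_n$, and $\gamma$ meets $|z|=r_n$ at some $z_n\in U_n$. Convexity gives $\log M(r_n^{\epsilon},f)\le\epsilon\log M(r_n,f)+(1-\epsilon)\log M(1,f)<\tfrac12\log M(r_n,f)$ for $\epsilon<\tfrac12$ and large $n$; the condition $\epsilon<\alpha/4$ plays no role.

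\textbf{The gap.} Everything rests on $\log|f(z_n)|\ge\tfrac12\log M(r_n,f)$, which you only grant, and the route you sketch for it fails.
\begin{enumerate}
\item The maximum principle on $|z|=r_n^{1-\alpha/2}$ gives an upper bound for $|f|$ inside that circle and one point of large modulus on its image. It gives no lower bound at every point of the annulus.
\item The claim that $U_{n+1}$ lies outside the image of $\{|z|\le r_n^{1-\alpha}\}$, i.e.\ beyond $M(r_n^{1-\alpha},f)$, is false in general. Theorem~\ref{increasing-annulus} only says that $U_n$ contains the annulus, and $U_n$ may extend inside $|z|=r_n^{1-\alpha}$. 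Such points $z$ are mapped into $U_{n+1}$ with $|f(z)|\le M(|z|,f)<M(r_n^{1-\alpha},f)$.
\item The three-circles step goes the wrong way. Convexity bounds $\log M(r_n^{1-\alpha},f)$ \emph{above} by $(1-\alpha)\log M(r_n,f)+O(1)$, not below by $c\log M(r_n,f)$.
\end{enumerate}

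\textbf{The repair.} What is missing is a Harnack comparison of $z_n$ with a point $w_n$ on the \emph{same} circle, with $|w_n|=r_n$ and $|f(w_n)|=M(r_n,f)$. For large $n$, $f$ maps $A(r_n^{1-\alpha},r_n^{1+\alpha})$ into $U_{n+1}\subset\{|w|>1\}$. So $H(\zeta)=\log|f(e^{\zeta})|$ is positive and harmonic on the strip $(1-\alpha)\log r_n<\operatorname{Re}\zeta<(1+\alpha)\log r_n$; no subtraction of boundary values is needed. Take lifts $\zeta_1,\zeta_2$ of $z_n,w_n$ on the line $\operatorname{Re}\zeta=\log r_n$ with $|\zeta_1-\zeta_2|\le\pi$. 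The disc of radius $R=\alpha\log r_n$ about $\zeta_2$ lies in the strip, so Harnack's inequality gives $\log|f(z_n)|\ge\frac{R-\pi}{R+\pi}\log M(r_n,f)$. This exceeds $\tfrac12\log M(r_n,f)$ for large $n$. With this inserted, your argument is complete.

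\textbf{Comparison with the classical route.} The paper gives no proof and only cites Bergweiler. The classical argument avoids Theorem~\ref{increasing-annulus}:
\begin{enumerate}
\item Fix a Jordan curve $\Gamma\subset U$ that is not null-homotopic in $U$, so that $f^n(\Gamma)$ winds around $0$ for large $n$ (as in Baker's proof of Theorem~\ref{Thm1}).
\item Harnack for the positive harmonic functions $\log|f^n|$ on the fixed domain $U$ gives $\max_\Gamma\log|f^n|\le C\min_\Gamma\log|f^n|$, with $C$ independent of $n$.
\item At a point $z_n\in\gamma\cap f^n(\Gamma)$, the maximum principle gives $\log|f(z_n)|\ge C^{-1}\log M(\rho_n,f)$, where $\rho_n=\min_\Gamma|f^n|$. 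Also $|z_n|\le\rho_n^{C}$.
\item Choosing $\epsilon<C^{-2}$ and using convexity yields the contradiction.
\end{enumerate}
That route needs only Baker's theorem and Harnack, but $\epsilon$ must be small depending on $U$, which is why the hypothesis is imposed for every $\epsilon$. Your route costs the much deeper 2013 annulus theorem. In return it gives $\log|f(z_n)|\ge(1-o(1))\log M(r_n,f)$, so a single curve for one fixed $\epsilon<1$ would already suffice.
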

 The function $f(z)=z+e^z$ satisfies the hypothesis of Theorem \ref{Thm 4.2}. To see this, let $\epsilon>0$. Then  choose $\gamma=\{t \in \mathbb{R}: -\infty < t < -M_\epsilon\}$ for suitable $M_\epsilon >1$ so that $|z|< e^{|z|^\epsilon}$ for every $z \in \gamma$. This is possible as $\lim_{|z| \to \infty} \frac{\log \log |z|}{\log |z|} =0.$ Further, for all $z \in \gamma, $ we have $z< e^z +z <0$, which gives  $|e^z +z|<|z|$. Therefore, $|f(z)|< |z|< e^{|z|^\epsilon}<e^{|z|^\epsilon} +|z|^\epsilon$ and the upper bound is nothing but $ M(|z|^{\epsilon}, f)$. 
 \par There is an extension of Theorem~\ref{Thm 4.2} by Zheng. 
 \begin{theorem}[Corollary 2 (I), \cite{Zheng 2006}]
 	If $f$ is a meromorphic function with at most finitely many poles such that for every $\epsilon>0$, there is an unbounded curve $\gamma$  such that $\log |f(z)| < \epsilon \log M(|z|,f)$ for all $z \in \gamma$ then $f$ does not have any BWD.
 	\end{theorem}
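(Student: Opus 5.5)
We argue by contradiction: suppose $f$ has a BWD $U$, and play the growth of $f$ along the curves $\gamma$ against the large modulus of $f$ on the BWDs. Two facts are needed.

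First, a lower bound on each $U_n$. By Theorem~\ref{Thm15}, applied with $B\subset U$ a domain containing a closed curve not null-homotopic in $U$, each $U_n$ contains an annulus $A(r_n,R_n)$ for all large $n$, with $R_n/r_n\to\infty$ and $r_n\to\infty$. Since $f$ has finitely many poles, we may take $n$ so large that all poles lie inside $\{|z|<r_n\}$. Then $f$ is analytic on $\{|z|\ge r_n\}$, and $f(U_n)\subset U_{n+1}$, where $U_{n+1}$ surrounds $U_n$.

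Second, the analogue of the step in Example~\ref{1st exm}, namely a minimum-modulus comparison on a suitable circle. We fix a Jordan curve $\sigma_n\subset U_n$ surrounding $0$ and the poles. Then $f(\sigma_n)\subset U_{n+1}$, which lies outside $U_n$, so $\min_{\sigma_n}|f|\ge \operatorname{dist}(0,U_{n+1})$. For the upper side, Theorem~\ref{Thm15} lets us pick a circle $|z|=\rho_n$ deep inside the annulus $A(r_n,R_n)$. On it the maximum of $|f|$ is attained in $U_{n+1}$, and by the Harnack/hyperbolic-metric argument on the wide annulus (a Bergweiler--Rippon--Stallard type estimate as in Theorem~\ref{increasing-annulus}), $\log|f|$ is nearly constant on $|z|=\rho_n$. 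Concretely, we aim to show
\[
\log|f(z)|\ge (1-o(1))\log M(\rho_n,f)\quad\text{for } |z|=\rho_n .
\]
The natural route is to apply Harnack's inequality to the positive harmonic function $\log|f|-\log r_{n+1}$ on $A(r_n,R_n)$. This function is positive there because $f(A(r_n,R_n))$ lies outside the hole of $U_{n+1}$. Harnack on the middle circle of a very wide annulus gives a ratio tending to $1$, which is exactly what $R_n/r_n\to\infty$ provides.

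Now we use the hypothesis with a small $\epsilon$ (say $\epsilon<1/2$). The unbounded curve $\gamma$ must cross the circle $|z|=\rho_n$ for all large $n$, at some point $z_n$. The hypothesis gives $\log|f(z_n)|<\epsilon\log M(\rho_n,f)$, while the Harnack estimate gives $\log|f(z_n)|\ge(1-o(1))\log M(\rho_n,f)$. Since $M(\rho_n,f)\to\infty$, these are incompatible, so no BWD exists.

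The main obstacle is making the Harnack step rigorous. We need positivity of $\log|f|$ minus a threshold on the whole annulus, which requires controlling $\log M(\rho_n,f)$ relative to the threshold. Harnack controls the ratio of $h=\log|f|-\log r_{n+1}$, not of $\log|f|$ itself, so we also need $\log r_{n+1}=o(\log M(\rho_n,f))$. I would try to get this from the fact that $f(A(r_n,R_n))$ covers a large annulus inside $U_{n+1}$, again by Theorem~\ref{Thm15}, so that $\log M(\rho_n,f)\ge\log R_{n+1}'$ with $\log R_{n+1}'/\log r_{n+1}\to\infty$. If this fails, the fallback is to choose $\epsilon$ small enough to absorb the bounded ratio coming from Harnack with a fixed modulus.
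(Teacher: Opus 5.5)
The paper gives no proof of this statement; it only cites Zheng. Your proposal therefore has to stand on its own. The strategy is sound, and once one slip is corrected it is complete:
\begin{itemize}
\item take round annuli $A(r_n,R_n)\subset U_n$ with $R_n/r_n\to\infty$ from Theorem~\ref{Thm15}, applied to some multiply connected $U_N$ rather than to $U$ itself;
\item apply Harnack's inequality on the core circle;
\item take a point of $\gamma$ on that circle.
\end{itemize}
The Jordan curves $\sigma_n$ and the minimum-modulus paragraph are not needed.

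The slip is the choice of harmonic function. The function $\log|f|-\log r_{n+1}$ need not be positive on $A(r_n,R_n)$. Here $r_{n+1}$ is only the inner radius of \emph{some} annulus inside $U_{n+1}$, and $U_{n+1}$ may contain points of modulus less than $r_{n+1}$. Lying outside the hole of $U_{n+1}$ gives only $|f|\ge\operatorname{dist}(0,U_{n+1})$.

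With the correct threshold, your ``main obstacle'' disappears. For large $n$ we have $\operatorname{dist}(0,U_{n+1})>1$. Also $f$ has no zeros in $U_n$ (they would map to $0\notin U_{n+1}$) and no poles there (poles lie in the Julia set). So $h=\log|f|$ itself is positive and harmonic on $A(r_n,R_n)$. Set $\rho_n=\sqrt{r_nR_n}$ and lift by $\log$ to a strip of width $\log(R_n/r_n)$. This shows that any two points of $\{|z|=\rho_n\}$ are at hyperbolic distance at most $\pi^2/\log(R_n/r_n)$ in the annulus. Harnack then gives
\[
\log|f(z)|\ \ge\ e^{-\pi^2/\log(R_n/r_n)}\,\log M(\rho_n,f)\qquad\text{for } |z|=\rho_n ,
\]
with the coefficient tending to $1$.

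More generally, subtracting any constant $c\ge 0$ costs nothing. The bound $\log|f|-c\ge(1-\delta)(\log M-c)$ already yields
\[
\log|f|\ge(1-\delta)\log M+\delta c\ge(1-\delta)\log M .
\]
So Harnack's ratio bound on $h$ transfers directly to $\log|f|$. You need neither $\log r_{n+1}=o(\log M(\rho_n,f))$ nor the fallback with a fixed modulus.

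To finish, take $\epsilon=1/2$ and $z_n\in\gamma$ with $|z_n|=\rho_n$; such a point exists for large $n$ since $\gamma$ is unbounded. Since $\log M(\rho_n,f)>0$, the hypothesis and the bound above contradict each other as soon as $\log(R_n/r_n)>\pi^2/\log 2$.
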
 
 	In the same paper Zheng also proved the following result.
 		\begin{theorem}{(Corollary 5, \cite{Zheng 2006})}
 			If $f$ is a meromorphic function with finitely many poles such that for all sufficiently large $r>0$ and $d>1$, $\log M(2r,f)>d\log M(r,f)$ then
 			$f$ does not have any BWD. 
 	\end{theorem}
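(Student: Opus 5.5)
We argue by contradiction. Suppose $f$ has finitely many poles, satisfies $\log M(2r,f)>d\log M(r,f)$ for all large $r$ and some fixed $d>1$, and has a BWD $U$. The idea is that on a round annulus of large modulus inside a BWD, $\log|f|$ behaves like $N\log|z|+c$, where $N$ is the winding number. Such linear growth in $\log r$ is incompatible with the geometric growth $\log M(2^k r,f)>d^k\log M(r,f)$ forced by the hypothesis.

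\emph{Step 1: a large annulus.} Apply Theorem~\ref{Thm15} to a small neighbourhood $B\subset U$ of a closed curve that is not null-homotopic in $U$. This gives, for all large $n$, $A(\rho_n,R_n)\subset U_n$ with $R_n/\rho_n\to\infty$. Since $U_{n+1}=f(U_n)$ lies far from $0$ and from the finitely many poles, $f$ has neither zeros nor poles on $A(\rho_n,R_n)$. Hence $u=\log|f|$ is harmonic there, and $f(z)=z^{N}h(z)$ with $h$ having a single-valued logarithm. Here $N\geq 1$ is the winding number of $f$ on a circle $|z|=r$, $\rho_n<r<R_n$; it is nonzero because $U_{n+1}$ surrounds $0$.

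\emph{Step 2: comparison of $\log|f|$ with $N\log|z|+c_n$.} This is the step I expect to be the main obstacle. We need a uniform estimate
\[
\log|f(z)|=N\log|z|+c_n+O(1)
\]
on a slightly smaller concentric annulus $A(\rho_n',R_n')$ that still has $R_n'/\rho_n'\to\infty$. In particular $\log M(r,f)$ and $\min_{|z|=r}\log|f(z)|$ would then differ by $O(1)$ there.

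My first attempt is to write $\log h$ as a Laurent series $c_n+\sum_{k\neq 0}a_kz^k$ on the annulus and bound the non-constant part. For this I would use the fact that the image $f(A(\rho_n,R_n))$ lies in the bounded domain $U_{n+1}$, which is far from $0$. This should give Harnack-type control of the positive harmonic function $\log|f|-\log\operatorname{dist}(0,U_{n+1})$. Shrinking the annulus by a fixed factor at each end should then yield the $O(1)$ error; compare the uniformity used in Theorems~\ref{increasing-annulus} and~\ref{Thm 2.14}. If this fails, I would fall back on Hadamard's three circles inequality, which gives convexity of $\log M$ in $\log r$, combined with the minimum-modulus control coming from $U_{n+1}$ lying in the Fatou set.

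\emph{Step 3: the contradiction.} Take $r=4\rho_n'$ and set $L=\log M(r,f)$. On the inner circle we have $\log|f|\geq \log\operatorname{dist}(0,U_{n+1})>0$. Step 2 then gives $c_n\geq -N\log\rho_n'-O(1)$, and therefore $L\geq N\log 4-O(1)\geq N\log 2$, so $N/L$ is bounded independently of $n$.

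For $k$ with $2^kr<R_n'$, Step 2 gives
\[
\log M(2^kr,f)\leq L+kN\log 2+O(1).
\]
The hypothesis, iterated, gives $\log M(2^kr,f)>d^kL$. Combining, $(d^k-1)L<kN\log 2+O(1)$. Dividing by $L$, and using that $N/L$ is bounded and $L\to\infty$, we get $d^k-1<Ck$ for a fixed constant $C$. Since $R_n'/\rho_n'\to\infty$, we may choose $k$ arbitrarily large, which gives a contradiction. Hence $f$ has no BWD.
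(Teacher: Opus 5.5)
\paragraph{Verdict: genuine gap in Step~2, but it is easily repaired.} The survey gives no proof of its own; it only cites Zheng. Your overall strategy is sound: on large round annuli inside $U_n$ the function $f$ behaves like a monomial, and this is incompatible with the growth $\log M(2^kr,f)>d^k\log M(r,f)$ that the hypothesis forces.

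\paragraph{The gap.} Step~2 claims an \emph{additive} error $O(1)$, uniform in $n$, and the tool you propose cannot deliver it. Set $\ell_n=\log\inf_{A(\rho_n,R_n)}|f|$, so $\ell_n\ge\log\operatorname{dist}(0,U_{n+1})$, and let $v=\log|f|-\ell_n$. Harnack's inequality for $v$ gives only a \emph{multiplicative} comparison: $\max_{|z|=s}v\le K\min_{|z|=s}v$ for $4\rho_n\le s\le R_n/4$, with an absolute constant $K$. Positivity of $v$ bounds the Laurent coefficients of $\log h$ only by the circle mean of $v$. That mean is roughly $\log|f|$ on the inner circle minus $\log\operatorname{dist}(0,U_{n+1})$, which is in general unbounded in $n$. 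Call it $H$. A term $\operatorname{Re}(az/R_n)$ with $|a|=H/2$ keeps $v>0$, yet it oscillates by about $H/M\gg1$ on $|z|=R_n/M$ for every fixed $M$. So shrinking by a fixed factor does not give $O(1)$. Since Theorem~\ref{Thm15} gives no rate for $R_n/\rho_n\to\infty$, you also cannot shrink by $n$-dependent factors. Theorems~\ref{increasing-annulus} and~\ref{Thm 2.14} are for entire functions only, so they do not apply here. Your three-circles fallback gives convexity of $\log M$ in $\log r$, which is the wrong direction for an upper bound on $\log M(2^kr,f)$.

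Two smaller slips:
\begin{itemize}
\item The inequality $N\log 4-O(1)\ge N\log2$ needs the term $\log\operatorname{dist}(0,U_{n+1})\to\infty$ to be kept.
\item $N\neq0$ does not follow just from $U_{n+1}$ surrounding $0$. However, $N\ge1$ is never actually used.
\end{itemize}

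\paragraph{The repair.} Step~3 does not need additive precision. Because $f$ has no zeros or poles on $A(\rho_n,R_n)$, the circle mean $\mu(s)$ of $\log|f|$ equals $N\log s+c_n$ exactly there, by harmonicity and the argument principle. Harnack then gives
\[
\log M(s,f)\le \ell_n+K\bigl(\mu(s)-\ell_n\bigr)\quad\text{for } 4\rho_n\le s\le R_n/4 .
\]
Put $s=4\rho_n$ and $L=\log M(s,f)$. Since $\mu(2\rho_n)\ge\ell_n\ge0$, we get $L\ge\mu(s)=\mu(2\rho_n)+N\log2\ge N\log2$. Whenever $2^{k+4}\rho_n\le R_n$, this gives
\[
d^kL<\log M(2^ks,f)\le \ell_n+K\bigl(\mu(s)-\ell_n+kN\log2\bigr)\le KL+KkN\log2\le K(k+1)L .
\]
First fix $k$ with $d^k\ge K(k+1)$. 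Then take $n$ large enough that $2^{k+4}\rho_n\le R_n$ and $4\rho_n$ lies in the range where the hypothesis holds. This gives the contradiction.

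With this change your argument becomes a complete derivation from Theorem~\ref{Thm15}. Comparing the maximum and the mean of $\log|f|$ on circles up to a bounded \emph{factor} suffices. The price is that the doubling hypothesis must be iterated $k$ times rather than used once.
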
 
	\par Using the logarithmic change of variable, in 1992, Eremenko and Lyubich proved that there is no Fatou component for any entire function $f$ in class $\mathcal{B}$ such that the iterates of $f$ tend to infinity \cite{EremenkoLyubich 1992}. In 2000, Zheng generalized this result for meromorphic functions.
	\begin{theorem}(Theorem 2, \cite{Zheng 2000})
		Let  $f $ be a meromorphic function of bounded type, i.e., for which the set of all finite singular values is bounded. Then  $ f^n(z) \not\to \infty$ as $n \to \infty$ for any $z$ in the Fatou set of $f$. In particular, the function $f$ does not have any BWD.
		\label{B-NoBWD}
	\end{theorem}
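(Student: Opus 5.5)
The plan is to follow Eremenko--Lyubich, with Rippon--Stallard's adaptation to meromorphic functions: use a logarithmic change of variables near $\infty$, argue by contradiction, and show that an escaping Fatou component would force a hyperbolic-metric expansion incompatible with normality. The statement about BWDs then follows at once, because on a BWD $f^n\to\infty$ by definition. So it suffices to rule out Fatou components on which $f^n\to\infty$. (The case of a BWD is also covered by the sentence in the introduction quoting Theorem A of \cite{RipponStallard 1999} with $n=1$. The plan below reproduces the mechanism behind it.)

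\emph{Setup.} Suppose the finite singular values lie in $\{|w|<R_0\}$. Fix $R>R_0$ so large that every finite singular value lies in $D=\{|w|<R\}$, and also that every pole and $f(0)$ lie in $D$. There are at most countably many poles, but they accumulate only at $\infty$; enlarging $R$ and discarding finitely many components is enough, since only the behaviour near $\infty$ matters. Let $W=\{|w|>R\}$, so that $W\cup\{\infty\}\setminus\{\infty\}$ contains no singular value. Each component $V$ of $f^{-1}(W)$ not containing a pole is then either a logarithmic tract, on which $f:V\to W$ is a universal covering, or maps properly onto $W$. Components meeting poles are neighbourhoods of poles and are bounded. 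In a logarithmic tract, put $w=e^{\zeta}$ and $z=e^{s}$ to obtain a conformal lift $F$ from the preimage $\widetilde V$ in the $s$-plane onto the half plane $H=\{\operatorname{Re}\zeta>\log R\}$, with $e^{F(s)}=f(e^s)$.

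\emph{Key estimate.} Apply the Koebe $1/4$ theorem to $F^{-1}$ on disks in $H$. This gives the standard expansion $|F'(s)|\ge \frac{1}{4\pi}(\operatorname{Re}F(s)-\log R)$. In the original coordinates it reads
\[
\left|\frac{zf'(z)}{f(z)}\right|\ge \frac{1}{4\pi}\log\frac{|f(z)|}{R}
\]
whenever $|f(z)|>R$. I will assume, or prove by the same Koebe argument, that this holds at every $z$ with $|f(z)|$ large, away from the bounded pole neighbourhoods.

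\emph{Contradiction.} Let $U$ be a Fatou component with $f^n\to\infty$ on $U$. Pick $z_0\in U$ and a small disk around it, so that $f^n(z_0)\to\infty$ and eventually $|f^n(z_0)|>R^2$. Let $z_n=f^n(z_0)$. The hyperbolic metric of $\mathbb{C}\setminus\{0\}$, or of a domain avoiding $\overline D$, is comparable to $|dz|/(|z|\log|z|)$ near $\infty$. The key estimate then gives
\[
\frac{|(f^n)'(z_0)|\,\cdot\,|z_0|}{|z_n|}\ \ge\ \prod_{k<n}\frac{\log(|z_{k+1}|/R)}{4\pi}\ \to\ \infty
\]
at least exponentially. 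On the other hand, $f^n(U)$ lies in the Fatou set, which omits the Julia set; that set contains infinitely many points near $\infty$. By Schwarz--Pick on $U$, compared with the hyperbolic metric of $\mathbb{C}$ minus two Julia points, the derivative $|(f^n)'(z_0)|$ can grow at most like $C|z_n|\log|z_n|$. Since $\log|z_{n}|$ itself grows at most exponentially in $n$ (Lemma~\ref{growth-on-periodicFC}-type bounds), the two estimates are incompatible. This is the contradiction.

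The main obstacle is the second step when there are poles. The preimage of $W$ then contains infinitely many bounded pole components, so orbits can pass through regions where the clean logarithmic-coordinate expansion fails. I would handle this as Rippon--Stallard do. Since $z_k\to\infty$ while $f(z_k)=z_{k+1}$ is huge, the point $z_k$ lies in $f^{-1}(W)$. Near a pole $p$ the map $f$ is a proper branched cover of a punctured neighbourhood of $\infty$ without singular values, so it is locally $(z-p)^{-m}$ up to conformal change. This again yields a uniform expansion of the metric $|dw|/(|w|\log|w|)$, and uniformity should come from Koebe applied on the simply connected preimage disks. With that expansion in hand, the rest of the argument is routine.
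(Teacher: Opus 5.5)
The paper does not prove this statement; it only cites Zheng and, in the introduction, Rippon--Stallard. Your route is the standard mechanism, and its two estimates are sound. You play the logarithmic-coordinate expansion $|zf'(z)/f(z)|\ge\frac{1}{4\pi}\log\frac{|f(z)|}{R}$ on $f^{-1}(W)$ off against Schwarz--Pick for $f^n$ into $\mathbb{C}\setminus\{a,b\}$ with $a,b\in\mathcal{J}(f)$.

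\textbf{The closing step fails as written.} You conclude by invoking that $\log|z_n|$ grows at most exponentially, citing Lemma~\ref{growth-on-periodicFC}. That lemma concerns periodic Fatou components. The case you need, in particular for the clause about BWDs, is a wandering domain, where no such bound holds. The paper's own Example~\ref{BWD-infinitelymanypoles} shows $\log\log|F_\infty^{2k}(z)|/k\to\infty$ on a BWD, and uses exactly this to prove non-periodicity. Inside a proof by contradiction you know nothing about how fast $z_n\to\infty$. Besides, ``at least exponentially'' against ``at most exponentially'' is not a contradiction.

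\textbf{The fix needs no growth bound.} Start at an index $k_0$ with $|z_k|>R$ for all $k\ge k_0$. This also removes any zeros of $f'$ on the early orbit, since $f^{-1}(W)$ contains no critical points. Then
\[
\prod_{k=k_0}^{n-1}\frac{\log(|z_{k+1}|/R)}{4\pi}\le\frac{|z_{k_0}|\,|(f^{n-k_0})'(z_{k_0})|}{|z_n|}\le C\log|z_n| .
\]
The last factor on the left is $\frac{1}{4\pi}\log(|z_n|/R)\ge\frac{1}{8\pi}\log|z_n|$ for large $n$. Hence $\prod_{k=k_0}^{n-2}\frac{1}{4\pi}\log(|z_{k+1}|/R)\le 8\pi C$ for all large $n$, which is impossible because the factors tend to $\infty$.

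\textbf{Two smaller repairs in the setup.} First, you cannot place ``every pole'' in $D$: poles lie in the $z$-plane and may be infinite in number. What you actually need is that no component of $f^{-1}(W)$ contains $0$, i.e.\ $0$ is not a pole (translate if necessary) and $R>|f(0)|$. Second, $\mathbb{C}\setminus\{0\}$ carries no hyperbolic metric; use $\{|z|>R\}$ or $\mathbb{C}\setminus\{a,b\}$ instead.

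\textbf{The pole case.} The condition $0\notin\widehat V$ is also what makes your sketch work. Write $R/f=\phi^m$ with $\phi$ a conformal map of the pole component $\widehat V$ onto $\mathbb{D}$. Then $0\notin\widehat V$ gives $\mathrm{dist}(z,\partial\widehat V)\le|z|$, and Koebe yields $|zf'(z)/f(z)|\ge\frac m2\sinh\bigl(\frac1m\log\frac{|f(z)|}{R}\bigr)\ge\frac12\log\frac{|f(z)|}{R}$, uniformly in the order $m$ of the pole. With these changes the argument is complete.
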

	\begin{Remark}
	For each $p>0$ and an entire function  $f\in\mathcal{B}$,  it is known that  $ f^{np}(z) \not \to \infty$ as $n \to \infty$ for any $z $ in the Fatou set of $f$. But this is not true for meromorphic functions of bounded type. 
	\end{Remark}
		There  are sufficient conditions  for meromorphic functions with finitely many poles ensuring the non-existence of BWD. These are based on the fact that the zeros (or pre-images of any  non-exceptional point) are separated by annuli with increasing modulus.
	In 2002, Zheng proved a result in this direction. For a complex number $\alpha$ and a meromorphic function $f$, the $\alpha-$value points are the pre-images of $\alpha$ under $f$.
	\begin{theorem}(Corollary 3, \cite{Zheng 2002})\label{Zheng 02}\label{Thm 4.6}
		Let $f$ be a meromorphic function with finitely many poles and $\{z_n\}_{n\geq 1}$ is the sequence of all distinct $\alpha$-value points of $f$ for some  $\alpha$ such that  $|z_{n+1}|>|z_n|$ and $$\sup_{n\geq 1}{\frac{|z_{n+1}|}{|z_n|}<\infty} ,$$ then $f$ has no BWD.
	\end{theorem}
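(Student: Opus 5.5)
We argue by contradiction: assume $f$ has a BWD $U$ and play the geometry of the $\alpha$-points against the annuli in the orbit of $U$. We may take $\alpha$ finite and not an omitted value, since otherwise there are not infinitely many $\alpha$-points. Put $K=\sup_{n}|z_{n+1}|/|z_n|<\infty$. Because $|z_n|\to\infty$ and consecutive moduli grow by at most the factor $K$, every round annulus $A(r,Kr)$ with $r\geq |z_1|$ contains an $\alpha$-point.

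\textbf{Step 1.} Pick a closed curve $\gamma\subset U$ that is not null-homotopic in $U$, and a domain $B$ with $\gamma\subset B$ and $\overline{B}\subset U$. This is possible because $U_n$ is multiply connected for large $n$; replacing $U$ by such a $U_n$, we may assume $U$ itself is multiply connected. Theorem~\ref{Thm15} then gives, for all large $n$,
\[
A(r_n,R_n)\subset f^n(B)\subset U_n \quad\text{with}\quad R_n/r_n\to\infty .
\]
Choose $n$ so large that $R_n/r_n>K$ and $r_n>|z_1|$. Then $A(r_n,R_n)$ contains some $\alpha$-point $z_m$, so $z_m\in U_n$ lies in the Fatou set.

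\textbf{Step 2.} Derive the contradiction by examining the forward orbit of $\alpha$. We have $f(z_m)=\alpha$, so $\alpha$ belongs to the Fatou component $U_{n+1}$. The same reasoning applies at every large index $k$: $U_k$ contains an $\alpha$-point, hence $\alpha\in U_{k+1}$. The components $U_k$ are pairwise disjoint, however, so $\alpha$ cannot lie in both $U_{k+1}$ and $U_{k+2}$. Equivalently, $f(U_k)$ and $f(U_{k+1})$ would both contain the point $\alpha$, while they lie in the disjoint sets $U_{k+1}$ and $U_{k+2}$. This contradiction shows $f$ has no BWD.

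\textbf{Main obstacle.} The delicate step is obtaining an annulus of modulus larger than $\log K$ inside $U_n$. Theorem~\ref{Thm15} supplies it for meromorphic functions with finitely many poles, which is exactly our setting, so the only real work is checking its hypotheses. We need $U$ to contain a closed curve that is not null-homotopic, which holds because BWDs are eventually multiply connected. If that theorem were unavailable, I would instead use the estimate of Theorem~\ref{increasing-annulus}, at least in the entire case, to obtain the annuli $A(|f^n(z_0)|^{1-\alpha},|f^n(z_0)|^{1+\alpha})$, whose ratio of radii tends to infinity.
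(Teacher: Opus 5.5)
Your argument is correct and follows the route the paper indicates for this result. The paper only cites Zheng, remarking that such criteria rest on the $\alpha$-points being separated by annuli of increasing modulus when a BWD exists. Your steps match that: Theorem~\ref{Thm15} places round annuli $A(r_n,R_n)$ with $R_n/r_n\to\infty$ inside $U_n$, the ratio bound forces an $\alpha$-point into each of them, and $f(U_n)\subset U_{n+1}$ then puts $\alpha$ into infinitely many pairwise disjoint components. One small point: your counting produces an $\alpha$-point with $r<|z|\le Kr$ rather than one in the open annulus $A(r,Kr)$, but this is harmless because you take $R_n/r_n>K$ strictly.
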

 Recall that $\lambda(f)$ and $\rho(f)$ denote the exponent of convergence of zeros and order of a meromorphic function $f$, respectively.  The next result is for those functions for which the strict inequality $\lambda(f)< \rho(f)$ holds.
	\begin{theorem}(Theorem 1, \cite{CaoWang 2002})\label{Cao-Wang-NoBWD}
		Let  $g$ be an entire function satisfying $\lambda(g)<\rho(g)$ and  $P$ be a polynomial including constants. Then  all the Fatou components of the function  
		$f(z)=g(z)+P(z)$ are simply connected, and hence $f$ does not have  any BWD. In particular,  the function $g$ does not have any BWD.
	\end{theorem}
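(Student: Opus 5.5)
The plan is to argue by contradiction: suppose $f=g+P$ has a BWD $U$. By Theorem~\ref{Thm1}, for all large $n$ the component $U_n$ contains a closed curve $\gamma_n$ that winds around $0$, with $\operatorname{dist}(\gamma_n,0)\to\infty$. By Theorem~\ref{increasing-annulus}, for large $n$ the domain $U_n$ even contains a round annulus $A(r_n^{1-\alpha},r_n^{1+\alpha})$ with $r_n\to\infty$. The goal is to show that a function with $\lambda(g)<\rho(g)$ cannot be large and ``almost radially constant'' on such thick annuli. The last sentence of the statement is the special case $P\equiv 0$. Simple connectivity of \emph{all} Fatou components then follows from Theorem~\ref{Thm1}, since any multiply connected Fatou component of an entire function is a BWD.

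First I would factor $g$. Since $\lambda(g)<\rho(g)$, Hadamard's theorem gives $g=\Pi e^{h}$. Here $\Pi$ is a canonical product with $\rho(\Pi)=\lambda(g)$, and $h$ is a polynomial of degree $q=\rho(g)$ when $\rho(g)<\infty$; if $\rho(g)=\infty$, $h$ is a transcendental entire function. I will treat the finite-order case first. Choose $\lambda(g)<\sigma<q$. Then $\log|\Pi(z)|\le |z|^{\sigma}$ everywhere. Outside a set of exceptional discs of small total size, we also have $\log|\Pi(z)|\ge -|z|^{\sigma}$ (Cartan/minimum-modulus estimate). Also, $\log|e^{h(z)}|=\mathrm{Re}\,h(z)=c|z|^q\cos(q\theta+\phi)+O(|z|^{q-1})$. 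So on a circle $|z|=r$ avoiding the exceptional discs, $\log|g(re^{i\theta})|$ equals $c r^q\cos(q\theta+\phi)$ up to an error $o(r^q)$. In particular there are arcs of $\theta$ on which $|g|\le \exp(-\tfrac c2 r^q)$, so $|f|\le |P|+1$ there, and other arcs on which $|f|\ge\exp(\tfrac c2 r^q)$.

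Now I would use the thick annuli. The annulus $A(r_n^{1-\alpha},r_n^{1+\alpha})$ has modulus tending to infinity, while the exceptional discs have small relative radii, so it contains a circle $|z|=\rho_n$ avoiding them. On that circle, $f$ is bounded by a polynomial in $\rho_n$ at some points and is huge at others. The contradiction comes from the absorbing-annulus structure, Theorem~\ref{Thm 2.14}. Applied to a compact subset of $U$, it places $f^{n}$ of that compact set inside $B_{n}$, where $\log|w|\asymp\log r_{n}$ uniformly. Since $\gamma_n$ can be taken as such an image circle, $\log|f|$ is comparable to $\log r_{n+1}$ at all points of $\gamma_n$. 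But the small values show $\log|f|\le O(\log \rho_n)$ at some points of the curve in $U_n$, while $\log r_{n+1}\ge \tfrac c2\rho_n^q\to\infty$ much faster. Put more simply, $f(U_n)\subset U_{n+1}$, and $U_{n+1}$ lies in $\{|w|> \operatorname{dist}(U_{n+1},0)\}$ with that distance eventually exceeding any polynomial in $\rho_n$. I would apply this to a whole circle $|z|=\rho_n$ contained in $U_n$ that meets the small-value arc.

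The main obstacle is twofold. First, guaranteeing that a circle inside $U_n$ avoids the exceptional set of $\Pi$ needs care about the sizes of the exceptional discs relative to $\rho_n$. Second, the infinite-order case needs a substitute for the angular structure of $\mathrm{Re}\,h$. In that case I would first try showing that $\rho(\Pi)<\infty$ forces $\mathrm{Re}\,h$ to take very negative values on every large circle, via Borel–Carathéodory or Hadamard's three circles applied to $h=\log(g/\Pi)$. This would again produce points of $U_n$ where $|f|$ is only polynomially large, contradicting $\operatorname{dist}(U_{n+1},0)\to\infty$.
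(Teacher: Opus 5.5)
The survey only quotes this theorem from Cao--Wang, so there is no proof in the paper to compare against. Judged on its own, your main line is the right idea: on a round circle $|z|=\rho_n$ inside $U_n$, $\log|f|$ cannot take values of order $\rho_n^{q}$ and values of order $\log\rho_n$ at the same time.

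\textbf{The key step is not justified by what you cite.} The contradiction rests on $\log|f|\asymp\log r_{n+1}$ holding uniformly on that circle. Theorem~\ref{Thm 2.14} only gives $f^{n+1}(C)\subset B_{n+1}=A(r_{n+1}^{\alpha_{n+1}},r_{n+1}^{\beta_{n+1}})$. Nothing bounds $\beta_{n+1}/\alpha_{n+1}$, so membership in $B_{n+1}$ yields no comparability at all.

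The missing ingredient is Harnack's inequality. For large $m$, $u_m=\log|f^m|$ is a positive harmonic function on $U$, because $f^m(U)\subset U_m\subset\{|w|>1\}$. So for a disc $B\ni z_0$ with $\overline{B}\subset U$ there is a constant $K$, independent of $m$, with $K^{-1}u_m(z_0)\le u_m\le K u_m(z_0)$ on $\overline{B}$. Using this $B$ in Theorem~\ref{increasing-annulus} puts the circle $|z|=\rho_n$ inside $f^n(\overline{B})$. Then $\log M(\rho_n,f)\le K^2\log\min_{|z|=\rho_n}|f(z)|$, which is exactly the uniform statement you need.

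\textbf{Your ``put more simply'' version should be dropped.} Nesting of the $U_n$ only gives $\operatorname{dist}(U_{n+1},0)\ge\sup_{U_n}|z|$. Nothing you quote makes this distance exceed every power of $\rho_n$, and in general it does not. Already in Example~\ref{1st exm} a computation with the recursion for $r_n$ gives $\log\operatorname{dist}(U_{n+1},0)=O(\log|f^n(z_0)|)$. So for nonconstant $P$ that version proves nothing.

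\textbf{Both obstacles you list disappear once the Harnack bound is in place.}
\begin{itemize}
\item For the large values, use $\log M(\rho_n,f)/\log\rho_n\to\infty$, which holds for every transcendental entire $f$. Then no lower bound for $\Pi$ is needed, hence no Cartan exceptional discs.
\item For the small values, use only $\log|\Pi(z)|\le|z|^{\sigma}$ with $\sigma>\lambda(g)$, together with Borel--Carath\'eodory: $\min_{|z|=\rho}\mathrm{Re}\,h\le-\tfrac12 M(\rho/2,h)+\tfrac32|h(0)|$.
\item The right side beats $\rho^{\sigma}$ when $h$ is a polynomial of degree $\rho(g)>\lambda(g)$. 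It also does so when $h$ is transcendental, since Cauchy's estimates give $M(r,h)\ge|a_N|r^N$ for every nonzero Taylor coefficient $a_N$.
\end{itemize}
This produces a point on $|z|=\rho_n$ where $|f|\le 1+M(\rho_n,P)$. Hence $\log M(\rho_n,f)\le K^2\log(1+M(\rho_n,P))=O(\log\rho_n)$, a contradiction that handles finite and infinite order at once.
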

  Theorem~\ref{Cao-Wang-NoBWD} is applied to construct a class of examples.
\begin{example}\label{Lem5.1}
	If $P$ and $Q$ are two non-constant polynomials with the same degree and the same leading coefficient, then $e^{P(z)}+e^{Q(z)}$ does not have any BWD.
	\end{example}
 
\begin{proof}
Let  $\deg(P)=\deg(Q)=d$ and $f(z)=e^{P(z)}+e^{Q(z)}$. Since the order of $f$ is $d$ and  $\deg(P-Q) < d$, in view of Theorem~\ref{Cao-Wang-NoBWD}, it is enough to show that the exponent of convergence of zeros of $f$ is at most $\deg(P-Q)$.
	
Let  \begin{equation}\label{Equ 22}
	 P(z)-Q(z)=Az^m+o(|z|^m)
	\end{equation} with $A\neq 0$ and $m<d$, where $o(|z|^m)$ is a function of $|z|^m$ that goes to $0$ as $|z| \to \infty$. Note that the zeros of $f$ are the solutions of $e^{ P(z)-Q(z)}=-1$, i.e.,  $ P(z)-Q(z)=(2k+1)\pi i$ for some integer $k$. Let the sequence of all such non-zero solutions be denoted by $\{a_k\}_{k>0}$. Then $A (a_k)^m +o(|a_k|^m)=(2k+1)\pi i$ and therefore $\lim_{ k\to \infty} \frac{(2k+1)\pi i}{a_k ^m}=A$. For $\epsilon < \frac{|A|}{2}$, there is a $k_0$ such that $| \frac{(2k+1)\pi i}{a_k ^m}-A | < \epsilon$ for all $k >k_0$. This gives that $\frac{|A|}{2} < |\frac{(2k+1)\pi i}{a_k ^m}|< \frac{3|A|}{2}$. In other words, $$c_1 |k|^{\frac{1}{m}}< |a_k| <c_2 |k|^{\frac{1}{m}} ~\mbox{ for all}~ k >k_0~\mbox{where}~ c_1=\left(\frac{4 \pi}{3|A|}\right)^{\frac{1}{m} }~\mbox{and}~c_2= \left(\frac{6 \pi}{|A|}\right)^{\frac{1}{m}}.$$ 
	
 Consequently, we have $ \frac{1}{c_{2}^\lambda} \sum_{k=0}^{\infty}\frac{1}{k^\frac{\lambda}{m}} \leq   \sum_{k=0}^{\infty}\frac{1}{|a_k|^\lambda}\leq\frac{1}{c_{1}^\lambda} \sum_{k=0}^{\infty}\frac{1}{k^\frac{\lambda}{m}}$. Thus the series $\sum_{k=0}^{\infty}\frac{1}{|a_k|^\lambda}$ converges if and only  if   $\lambda>m$. Therefore, $\lambda(f) =\deg(P-Q)$. This completes the proof.
\end{proof}

	\begin{Remark}
	There are entire functions of the form $f(z)=g(z)+P(z)$ (where $P$ is a polynomial) without having any BWD, although the condition $\lambda(g)<\rho(g)\leq\infty$ is not satisfied by them. This can be seen 	by taking $g(z)=e^z$ in Theorem~\ref{Cao-Wang-NoBWD} since  the order of $f$ is $1$ whereas the exponent of convergence of its zeros is also $1$ (This can be seen using the argument used in the previous example).
		\end{Remark}
In 2024,  Cao et al.  gave an important property of zeros of a  entire function, which becomes a necessary condition for the existence of a BWD.
	
	\begin{theorem}(Theorem 1.4, \cite{CaoWangZhao 2024}\label{Thm36})\label{Thm 4.8}
	 If an entire function $f$ has either only finitely many zeros or a sequence of distinct zeros $\{a_n\}_{n\geq 1}$ among all its zeros satisfying
		\begin{equation}\label{eqn7}
			\limsup_{n\to\infty}\frac{\log|a_{n+1}|}{\log|a_n|}=1,
		\end{equation}
		then all the Fatou components of $f$ are simply connected. In particular, there is no BWD of $f$.
	\end{theorem}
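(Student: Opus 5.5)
Suppose, for a contradiction, that $f$ has a multiply connected Fatou component. By Theorem~\ref{Thm1} and Corollary~\ref{BWD-iff-MCFC} this component is a BWD $U$. So it suffices to show that $f$ cannot have a BWD under either hypothesis. We will use the absorbing annuli of Theorem~\ref{increasing-annulus} and Theorem~\ref{Thm 2.14}. Fix $z_0\in U$ and write $\rho_n=|f^n(z_0)|$. For all large $n$, $U_n$ contains the round annulus $A(\rho_n^{1-\alpha},\rho_n^{1+\alpha})$ for some fixed $\alpha\in(0,1)$. Also $\rho_n\to\infty$, and $\liminf \log\rho_{n+1}/\log\rho_n>1$, which in fact tends to infinity for such growth.

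\textbf{Zeros cannot lie in the annuli.} On the closure of $U_n$ we have $f=f|_{U_n}$, with $f(U_n)\subset U_{n+1}$. For large $n$, $U_{n+1}$ surrounds $0$ and lies far from $0$, so $f$ has no zeros in $U_n$. Hence every zero $a$ of $f$ with $|a|$ large satisfies $|a|\notin(\rho_n^{1-\alpha},\rho_n^{1+\alpha})$ for every large $n$.

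\textbf{Case 1: infinitely many zeros.} Take the given sequence $a_n$ with $\limsup\log|a_{n+1}|/\log|a_n|=1$. Since the $a_n$ are distinct zeros of an entire function, $|a_n|\to\infty$. By the limsup condition, for large $n$ we get $\log|a_{n+1}|\le(1+\alpha/2)\log|a_n|$, so consecutive members cannot skip an annulus $A(\rho_m^{1-\alpha},\rho_m^{1+\alpha})$. Indeed, skipping it would require $\log|a_{n+1}|/\log|a_n|\ge(1+\alpha)/(1-\alpha)>1+\alpha/2$. But the sequence must go from inside the annulus to outside it for infinitely many $m$, since the annuli escape to $\infty$ while $|a_n|\to\infty$. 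Although the sequence need not be monotone in modulus, some consecutive pair must straddle the annulus. If $|a_n|\le\rho_m^{1-\alpha}$ and the sequence eventually exceeds $\rho_m^{1+\alpha}$, take the last index $n$ with $|a_n|\le \rho_m^{1-\alpha}$. Then $|a_{n+1}|\ge\rho_m^{1+\alpha}$, because no term can lie inside the annulus. This contradicts the limsup bound, and it is the main technical point.

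\textbf{Case 2: finitely many zeros.} Then $f=Pe^{h}$ with $P$ a polynomial and $h$ non-constant entire. The plan is to show $f$ is bounded, or at least suitably small, on an unbounded curve, and then apply Proposition~\ref{unbounded-BWD}. Choose a curve along which $\operatorname{Re}h\to-\infty$ fast; for example, take a path to $\infty$ in the preimage under $h$ of a left half-plane, which is unbounded by an asymptotic-path argument. Then $|P e^{h}|\to0$ along a path where $\operatorname{Re}h\le -C\log|z|$. Iversen's theorem applied to $e^{h}$, for which $0$ is omitted, gives a curve $\gamma\to\infty$ with $e^{h}\to0$. The difficulty is beating the polynomial factor. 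If this direct route is awkward, use instead that $0$ is an omitted value of $e^{h}$, hence asymptotic, and apply Theorem~\ref{Thm 4.2}: along $\gamma$ we have $|f(z)|\le|P(z)|$ eventually, which is $\le M(|z|^\epsilon,f)$ for large $|z|$ since $f$ is transcendental. Theorem~\ref{Thm 4.2} then yields that all Fatou components are simply connected, which finishes this case.
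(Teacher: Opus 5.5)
Your proof is correct. The survey does not prove this statement itself; it only cites Cao--Wang--Zhao. Your Case~1 is exactly the mechanism the survey alludes to when it says the zeros are ``separated by annuli with increasing modulus.'' For large $m$ we have $f(U_m)\subset U_{m+1}$ and $0\notin U_{m+1}$, so no zero lies in the annulus $A(\rho_m^{1-\alpha},\rho_m^{1+\alpha})\subset U_m$ given by Theorem~\ref{increasing-annulus}. Since $(1+\alpha)/(1-\alpha)$ is bounded away from $1$, a sequence of zeros with $\limsup\log|a_{n+1}|/\log|a_n|=1$ cannot jump over these annuli. Your last-index device correctly handles sequences that are not monotone in modulus.

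Two small clean-ups:
\begin{itemize}
\item Make the order of choices explicit. First fix $N_0$ with $|a_n|>1$ and $\log|a_{n+1}|\le(1+\alpha/2)\log|a_n|$ for all $n\ge N_0$. Then take $m$ so large that the annulus is zero-free and $\rho_m^{1-\alpha}\ge|a_{N_0}|$; this guarantees the last index lies beyond $N_0$.
\item Drop the remark that $\liminf\log\rho_{n+1}/\log\rho_n>1$. The $\liminf$ in Theorem~\ref{increasing-annulus} concerns $\log R_n/\log r_n=(1+\alpha)/(1-\alpha)$, not consecutive orbit points, and you never use it.
\end{itemize}

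Case~2 is fine as finally written. The bound $|f|\le|P|\le M(|z|^{\epsilon},f)$ on a tail of Iversen's asymptotic path for $e^h$ feeds directly into Theorem~\ref{Thm 4.2}, so the preliminary discussion of paths with $\operatorname{Re}h\le -C\log|z|$ can be deleted. A shorter alternative: a value taken only finitely often by a transcendental entire function is an asymptotic value. So $0$ is a finite asymptotic value of $f$, and Proposition~\ref{unbounded-BWD} excludes a BWD at once. This uses a slightly stronger form of Iversen's theorem than the omitted-value version quoted in the survey, but avoids Theorem~\ref{Thm 4.2}.
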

The above theorem can be restated as: if an entire function $f$ has a BWD, then $f$ has infinitely many zeros, say $a_n$ and those satisfy $\limsup_{n\to\infty}\frac{\log|a_{n+1}|}{\log|a_n|}>1$. This is because, $ \log|a_{n+1}| \geq  \log|a_n|$ is possible for all but at most  finitely many values of $n$.

	\begin{Remark}
		In Theorem~\ref{Thm 4.8}, the sequence $a_n$ is not required to satisfy $|a_{n+1}|>|a_n|$ unlike $z_n$ in Theorem \ref{Zheng 02}.  
	\end{Remark}
	There are two  results similar to Theorem~\ref{Thm 4.8}, reported in the same paper.
	\begin{theorem}(Theorem 1.1, \cite{CaoWangZhao 2024})
		Let $f$ and $\phi$ be two entire functions such that $M(r,\phi)\leq M(r, f)^\alpha$ for some $\alpha\in(0,1)$ and for all large $r$. If $f(z)-\phi(z)$ has either only finitely many zeros or a sequence of distinct zeros $\{z_n\}_{n\geq 1}$ among all its zeros satisfying Equation (\ref{eqn7}). Then there is no BWD of $f$.
		\label{CW2024}
	\end{theorem}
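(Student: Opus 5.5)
We argue by contradiction: assume $f$ has a BWD $U$, and show that the zeros of $f-\phi$ must then satisfy $\limsup_{n\to\infty}\frac{\log|z_{n+1}|}{\log|z_n|}>1$ along every sequence of distinct zeros (and that there are infinitely many zeros). This contradicts the hypothesis. The two ingredients are the large annuli inside the BWDs and a Rouché-type comparison of $f$ with $\phi$ on those annuli.

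First we produce the annuli. By Theorem~\ref{increasing-annulus}, for $z_0\in U$ there is $\alpha\in(0,1)$ such that for all large $n$,
\[
A\bigl(|f^n(z_0)|^{1-\alpha},\,|f^n(z_0)|^{1+\alpha}\bigr)\subset U_n .
\]
Write $\rho_n=|f^n(z_0)|\to\infty$. Consider the round annuli $B_n=A(\rho_n^{1-\alpha},\rho_n^{1+\alpha})$. Each lies in the Fatou set and surrounds $0$, and for any $s\in(1,\frac{1+\alpha}{1-\alpha})$ we have $\log R/\log r> s$ on $B_n$.

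Next we show $f-\phi$ has no zeros on slightly smaller concentric annuli. The key estimate is that on most of $B_n$, $\log|f(z)|\geq (1-\delta)\log M(|z|,f)$. This holds because $f(B_n)\subset U_{n+1}$, which is separated from $0$ by the Julia set. Also $f$ is a proper map of $U_n$ onto $U_{n+1}$, by the discussion on eventual connectivity, and it winds around $0$. The minimum modulus on the circles of $B_n$ is therefore comparable to the maximum modulus on those circles. This is the standard fact used by Bergweiler--Rippon--Stallard: a Harnack-type inequality for the positive harmonic function $\log|f|$ on the hyperbolic domain $U_n$, giving
\[
\log|f(z)|\geq c\log M(|z|,f)
\]
with $c\to1$ in the core of the annulus. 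Combining this with $M(r,\phi)\le M(r,f)^{\alpha'}$, where $\alpha'<1$ is the exponent in the hypothesis (renamed to avoid a clash with Theorem~\ref{increasing-annulus}), we choose a subannulus $B_n'=A(\rho_n^{1-\alpha/2},\rho_n^{1+\alpha/2})$ on which $|\phi(z)|\le M(|z|,f)^{\alpha'}<|f(z)|$. Hence $f-\phi\neq0$ on $B_n'$. Moreover, by Rouché on the circles $|z|=\rho_n$, $f-\phi$ and $f$ have the same number of zeros inside. Since $f$ winds nontrivially about $0$ on these circles, with degree tending to $\infty$ because $U_{n+1}$ surrounds $0$ and $f$ is unbounded, $f-\phi$ has infinitely many zeros.

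Finally we derive the contradiction. The zeros of $f-\phi$ avoid every $B_n'$, and each $B_n'$ satisfies $\log(\text{outer})/\log(\text{inner})=\frac{1+\alpha/2}{1-\alpha/2}>1$. Moreover, zeros exist inside $|z|<\rho_n^{1-\alpha/2}$ and also beyond $\rho_n^{1+\alpha/2}$, since the count inside increases. Now take any sequence $(z_k)$ of distinct zeros tending to $\infty$, which a sequence of distinct zeros of an entire function must do. For each large $n$ there is an index $k$ with $|z_k|\le\rho_n^{1-\alpha/2}$ and $|z_{k+1}|\geq\rho_n^{1+\alpha/2}$. This forces $\limsup\frac{\log|z_{k+1}|}{\log|z_k|}\ge\frac{1+\alpha/2}{1-\alpha/2}>1$, contradicting (\ref{eqn7}). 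If instead there are only finitely many zeros, the Rouché count above already gives the contradiction.

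The main obstacle is the lower bound $\log|f|\ge c\log M(r,f)$ with $c>\alpha'$ on a definite subannulus of $B_n$. I would first try the Harnack estimate for $\log|f|$ on $U_n$, using that $A(\rho_n^{1-\alpha},\rho_n^{1+\alpha})$ has large modulus. If this fails, I would instead use that $f^{n}$ maps the circle $|z|=\rho_n$ into $B_{n+1}$, which yields $\log|f|\ge(1-\alpha)\log\rho_{n+1}$ and $\log M\le(1+\alpha)\log\rho_{n+1}$. For this route one must shrink $\alpha$ so that $\frac{1-\alpha}{1+\alpha}>\alpha'$. This is possible since $\alpha$ may be taken small in Theorem~\ref{increasing-annulus}: the annulus with a smaller $\alpha$ is a subset of the one given there.
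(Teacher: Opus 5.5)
The survey quotes this result from Cao--Wang--Zhao without proof, so there is nothing in the paper to compare against; I judge your argument on its own terms. It is sound: zero-free annuli of definite logarithmic width for $f-\phi$ inside the $U_n$, followed by the crossing argument against (\ref{eqn7}), is the right mechanism. One step, however, needs a real justification rather than the one you give.

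\textbf{The Harnack step works.} The step you flag as the main obstacle goes through, so you can drop the hedging. For large $n$, $f(U_n)\subset U_{n+1}$ and $\mathrm{dist}(0,U_{n+1})\to\infty$, so $u=\log|f|$ is positive harmonic on $U_n$ and $u(z)\le e^{d_{U_n}(z,w)}u(w)$. In $A(r,R)$ with $L=\log(R/r)$, the circle $|z|=s$ has hyperbolic length $2\pi^2/\bigl(L\sin(\pi\log(s/r)/L)\bigr)$. On your middle subannulus $\overline{B_n'}$ this is at most $\ell_n=\sqrt{2}\,\pi^2/(\alpha\log\rho_n)\to 0$, and it only decreases in $U_n\supset B_n$. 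Hence $\log|f(z)|\ge e^{-\ell_n/2}\log M(|z|,f)>\alpha'\log M(|z|,f)\ge\log|\phi(z)|$ on $\overline{B_n'}$ for large $n$. Properness of $f$ and separation by the Julia set play no role.

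\textbf{Drop the fallback route.} Theorem~\ref{increasing-annulus} gives $A(\rho_n^{1-\alpha},\rho_n^{1+\alpha})\subset f^n(B)$. It does not give $f(\{|z|=\rho_n\})\subset A(\rho_{n+1}^{1-\alpha},\rho_{n+1}^{1+\alpha})$, and $U_{n+1}$ may extend well beyond that annulus.

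\textbf{The unjustified step.} You claim that the number $N_n$ of zeros of $f$ in $\{|z|<\rho_n\}$ tends to $\infty$ ``because $U_{n+1}$ surrounds $0$ and $f$ is unbounded''. Knowing that $f(\{|z|=\rho_n\})\subset U_{n+1}$ gives, via the minimum principle, only $N_n\ge 1$. Nothing you say excludes a bounded winding number, i.e.\ $f=Pe^{g}$. This matters exactly in the finitely-many-zeros case, because Rouch\'e only transfers the count from $f$ to $f-\phi$. The repair is one line, combining your Harnack bound on $|z|=\rho_n$ with Jensen's formula (divide $f$ by a power of $z$ first if $f(0)=0$):
\[
(1-o(1))\log M(\rho_n,f)\le\frac{1}{2\pi}\int_0^{2\pi}\log|f(\rho_n e^{i\theta})|\,d\theta\le N_n\log\rho_n+O(N_n+1).
\]
Since $\log M(r,f)/\log r\to\infty$ for transcendental $f$, this forces $N_n\to\infty$.

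\textbf{Two small points in the crossing step.} State explicitly that the index $k=k(n)$ tends to $\infty$; this holds because $|z_{k(n)+1}|\ge\rho_n^{1+\alpha/2}$. Also, for an infinite sequence of distinct zeros you do not need the ``count inside increases'' remark at all: $|z_1|<\rho_n^{1-\alpha/2}$ for large $n$ and $|z_k|\to\infty$.
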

	
	In Theorem~\ref{CW2024}, the function $\phi$ can be the zero function.
 
	\begin{theorem}(Theorem 1.3, \cite{CaoWangZhao 2024})
		Let $f(z)=s(z)g(z)+\phi(z)$, where $s(z)$ is a periodic entire function, $g(z)$ is a non-zero entire function, and $\phi(z)$ is an entire function such that $M(r,\phi)\leq M(r, sg)^\alpha$ for some constant $\alpha\in(0,1)$ and for all large $r$. If $s(z)$ has zeros, or $s(z)$ has no zero  but $M(r, g)\leq M(r, sg)^\beta$ for a constant $\beta\in(0,1)$ and for all large $r$, then $f$ has no BWD.
\end{theorem}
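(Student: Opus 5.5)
Since $f=sg+\phi$ with $M(r,\phi)\le M(r,sg)^\alpha$, the natural route is to reduce to Theorem~\ref{CW2024} applied to $f$ itself. Indeed $f-\phi=sg$, and it suffices to produce a sequence of distinct zeros $\{z_n\}$ of $sg$ satisfying Equation~(\ref{eqn7}) (or to show $sg$ has only finitely many zeros). One caveat: Theorem~\ref{CW2024} compares $M(r,\phi)$ with $M(r,f)^\alpha$, while our hypothesis compares it with $M(r,sg)^\alpha$. But $M(r,f)\ge M(r,sg)-M(r,\phi)\ge M(r,sg)-M(r,sg)^\alpha\ge \tfrac12 M(r,sg)$ for large $r$. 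Hence $M(r,\phi)\le (2M(r,f))^\alpha\le M(r,f)^{\alpha'}$ for any $\alpha'\in(\alpha,1)$ and all large $r$, so the hypothesis transfers.

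\emph{Case 1: $s$ has a zero.} Let $s$ have period $T\neq 0$ and $s(w)=0$. Then $z_n=w+nT$ are distinct zeros of $s$, hence of $sg$, and $|z_n|\sim n|T|$. Therefore $\log|z_{n+1}|/\log|z_n|\to 1$, which is (\ref{eqn7}), and Theorem~\ref{CW2024} gives the conclusion.

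\emph{Case 2: $s$ has no zero and $M(r,g)\le M(r,sg)^\beta$.} Here the zeros of $sg$ are those of $g$, and these need not satisfy (\ref{eqn7}), so I would rewrite $f$ differently. The plan is to treat $s$ as the dominant term: write $f-\psi=sg-g\cdot c+\dots$. The simplest choice is to find a constant $c$ such that $s-c$ has a zero (true for all but at most one value $c$, by Picard, since a nonconstant periodic entire function omits at most one value; if $s$ is constant the claim is degenerate and is handled separately). Put $\psi=cg+\phi$. Then $f-\psi=(s-c)g$, and $s-c$ is periodic with zeros, giving an arithmetic-progression family of zeros satisfying (\ref{eqn7}) as in Case 1. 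It remains to check $M(r,\psi)\le M(r,f)^{\alpha''}$. We have $M(r,cg)\le |c|M(r,sg)^\beta$ and $M(r,\phi)\le M(r,sg)^\alpha$, so $M(r,\psi)\le M(r,sg)^{\gamma}$ for any $\gamma\in(\max(\alpha,\beta),1)$ and large $r$. Combined with the comparison $M(r,f)\ge \tfrac12 M(r,sg)$ from above, this yields the needed bound, and Theorem~\ref{CW2024} applies to $f$ with $\psi$ in place of $\phi$.

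The main obstacle is the bookkeeping that converts bounds in terms of $M(r,sg)$ into bounds in terms of $M(r,f)$ with an exponent still below $1$; this uses the fact that $M(r,sg)\to\infty$. A secondary point is justifying that $s-c$ has zeros for a suitable $c$. If $s$ is nonconstant this follows from Picard. If $s$ is constant, Case 2's hypothesis $M(r,g)\le M(r,sg)^\beta$ forces $M(r,g)\to$ bounded, which is impossible for transcendental $g$, so this case is vacuous or excluded.
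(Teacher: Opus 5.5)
Your proof is correct. The survey only quotes this result and gives no proof of its own, and your reduction to Theorem~\ref{CW2024} is the derivation the two-case hypothesis is built for:
\begin{itemize}
\item the inequality $M(r,f)\ge \frac12 M(r,sg)$ moves the exponent bound from $M(r,sg)$ to $M(r,f)$;
\item if $s$ has a zero $w$, the zeros $w+nT$ of $f-\phi=sg$ give the required sequence;
\item otherwise, writing $f-(cg+\phi)=(s-c)g$ with any $c\neq 0$ (Picard, since $s$ already omits $0$), the $\beta$-hypothesis absorbs $cg$ into the small term.
\end{itemize}
The one point you leave implicit is $M(r,sg)\to\infty$. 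It holds because if $sg$ were a polynomial, Cauchy's estimates would make $\phi$, and hence $f$, a polynomial.
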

	The following theorem demonstrates that the simple connectivity of all Fatou components can be deduced from the distribution of certain points within the periodic or pre-periodic Fatou components. The proof follows a similar approach as that used in proving Theorem \ref{Thm36}.
	\begin{theorem}(Theorem 1.6, \cite{CaoWangZhao 2024})
	If there exists points $\{b_n\}_{n\geq 1}$	in a simply connected Fatou component, or in the Julia set of an entire function $f$ such that $|b_{n+1}|>|b_n|$ and $\limsup_{n\to\infty}\frac{\log|b_{n+1}|}{\log|b_n|}=1$ then $f$ has no BWD.
		\label{CWZ2024}
	\end{theorem}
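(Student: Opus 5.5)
We argue by contradiction. Suppose $f$ is entire and has a BWD $U$, and that there are points $b_n$ as in the statement. By Theorem~\ref{Thm1} and Corollary~\ref{BWD-iff-MCFC}, for all large $n$ the component $U_n$ is bounded and multiply connected, surrounds $0$, and satisfies $\mathrm{dist}(U_n,0)\to\infty$. The key quantitative input is Theorem~\ref{increasing-annulus}. Fix $z_0\in U$ and set $\rho_n=|f^n(z_0)|$. Then there is $\alpha\in(0,1)$ such that, for all large $n$, the round annulus $A(\rho_n^{1-\alpha},\rho_n^{1+\alpha})$ lies in $U_n$.

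We first control the $b_n$.
\begin{itemize}
\item The annulus $A(\rho_n^{1-\alpha},\rho_n^{1+\alpha})$ contains no point of $\mathcal{J}(f)$.
\item It also contains no point of any simply connected Fatou component. For large $n$ the Fatou component containing it is $U_n$, which is multiply connected. We may also assume the given simply connected component $V$ is not some $U_n$; this holds since $U_n$ is multiply connected for all large $n$, and finitely many exceptions do not matter.
\end{itemize}
Hence no $b_k$ lies in $A(\rho_n^{1-\alpha},\rho_n^{1+\alpha})$ for any large $n$.

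Next we obtain the contradiction. Since $|b_k|$ is strictly increasing, we first check that $|b_k|\to\infty$. If the $b_k$ stayed bounded, the condition $\limsup\log|b_{k+1}|/\log|b_k|=1$ would carry little content. The intended reading is that $|b_k|\to\infty$, and I would note this or pass to that case.

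Since $\rho_n\to\infty$, for each large $n$ there is a unique index $k=k(n)$ with $|b_k|\le\rho_n^{1-\alpha}$ and $|b_{k+1}|\ge\rho_n^{1+\alpha}$. This works because the sequence $|b_k|$ must jump over the whole annulus. Consequently
$$\frac{\log|b_{k+1}|}{\log|b_k|}\ \ge\ \frac{1+\alpha}{1-\alpha}>1 .$$
This happens for infinitely many distinct $k$: as $n\to\infty$ the annuli move off to $\infty$, so $k(n)\to\infty$. Therefore
$$\limsup_{k\to\infty}\frac{\log|b_{k+1}|}{\log|b_k|}\ \ge\ \frac{1+\alpha}{1-\alpha},$$
which contradicts the hypothesis. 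So $f$ has no BWD.

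The main obstacle, and the step to verify carefully, is the uniform $\alpha$ from Theorem~\ref{increasing-annulus}. The whole argument rests on the annuli in $U_n$ having modulus comparable to $\log\rho_n$, not merely on their modulus tending to infinity. If only $R_n/r_n\to\infty$ were available, as in Theorem~\ref{Thm15}, the ratio of logarithms could tend to $1$ and the argument would fail.

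A secondary point is the full conclusion that all Fatou components are simply connected. This follows from the above together with Corollary~\ref{BWD-iff-MCFC}: any multiply connected Fatou component of an entire function is a BWD, and we have just ruled out BWDs.
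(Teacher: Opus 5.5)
Your argument is correct. The survey gives no proof of its own; it only remarks that the argument parallels that of Theorem~\ref{Thm36}, and your proof is the natural realization of that remark. The round annuli $A(\rho_n^{1-\alpha},\rho_n^{1+\alpha})\subset U_n$ supplied by Theorem~\ref{increasing-annulus} contain no Julia points and no points of simply connected Fatou components. In the zero version they likewise contain no zeros, since $f(U_n)\subset U_{n+1}$ and $0\notin U_{n+1}$ for large $n$. So $|b_k|$ must jump across each annulus. You are also right that the uniform exponent $\alpha$ is what drives this; merely having $R_n/r_n\to\infty$, as in Theorem~\ref{Thm15}, would not suffice.

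There are two small corrections.

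\textbf{The hypothesis $|b_n|\to\infty$.} This condition cannot be ``passed to''. It must be added as a hypothesis, because the statement as printed is false without it. In Example~\ref{1st exm}, Property~1 puts the unit disk inside the immediate basin of the superattracting fixed point $0$. That basin is simply connected by Theorem~\ref{Thm1}, since it is not wandering. The points $b_k=\frac12-\frac{1}{k+3}$ lie in it and satisfy $|b_{k+1}|>|b_k|$ and $\log|b_{k+1}|/\log|b_k|\to 1$, yet $f$ has a BWD.

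\textbf{Positivity before dividing.} You should record that $k(n)\to\infty$ forces $|b_{k(n)}|>1$ for large $n$. Only once $\log|b_{k(n)}|>0$ do the bounds $\log|b_{k(n)}|\le(1-\alpha)\log\rho_n$ and $\log|b_{k(n)+1}|\ge(1+\alpha)\log\rho_n$ yield the ratio bound $(1+\alpha)/(1-\alpha)$.
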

Simply connected Fatou component in Theorem~\ref{CWZ2024} can be pre-periodic, periodic or wandering domain  of $f$.
	\par We finish this section with an interesting result by Wang and Yang.
	\begin{theorem}(Theorem 3(1). \cite{WangYang 2003})
		Let $f$ and $g$ be two commuting entire functions, i.e.,  $f\circ g=g\circ f$ and $g=f+b$ where $b\neq 0$, then none of $f$ or $g$   has any BWD.
		\label{permutable-BWD}
	\end{theorem}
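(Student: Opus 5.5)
The plan is to argue by contradiction. Suppose $f$ has a BWD $U$, and write $g=f+b$ with $b\neq 0$. The first step is to use Theorem~\ref{Benini-BWD}, applied to the permutable pair $(f,g)$. It gives that $f^n(U)$ is a BWD of $g$ for all large $n$. Hence both $f$ and $g$ share BWDs, and in particular $\mathcal{J}(f)$ and $\mathcal{J}(g)$ have common Fatou components far out. The real work, however, is to exploit the functional equation rather than the Julia sets.

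The commutation $f(g(z))=g(f(z))$ reads
\[ f(f(z)+b)=f(f(z))+b. \]
Set $h(w)=f(w+b)-f(w)-b$. Then $h$ vanishes on the image $f(\mathbb{C})$, which omits at most one point by Picard's theorem. Since $h$ is entire, it follows that $h\equiv 0$, that is,
\[ f(w+b)=f(w)+b \quad \mbox{for all } w. \]
Consequently $f(z)=z+p(z)$, where $p$ is entire and periodic with period $b$.

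The second step is to find an unbounded curve on which $f$ is controlled, and derive a contradiction. I would use Theorem~\ref{CaoWangZhao 2024}'s Theorem 1.3, stated just above, with $s=p$ (periodic), $g\equiv 1$ and $\phi(z)=z$. There are two cases.
\begin{enumerate}
\item If $p$ has zeros, then the hypotheses are met directly, since $M(r,z)=r\le M(r,p)^{\alpha}$ for large $r$ (because $p$ is transcendental).
\item If $p$ has no zeros, then $M(r,g)=1\le M(r,p)^{\beta}$ trivially.
\end{enumerate}
Either way $f$ has no BWD, which is the desired contradiction. The degenerate case where $p$ is constant makes $f$ a polynomial, which is excluded since $f$ is transcendental.

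An alternative route avoids the cited theorem. Periodicity gives that the points $z_0+kb$, $k\in\mathbb{N}$, satisfy $|f(z_0+kb)-(z_0+kb)|=|p(z_0)|$, so $f$ is "close to the identity" along the unbounded line $L=\{z_0+tb\}$. On $L$ we have $|f(z)|\le |z|+M$, where $M=\max_{t\in[0,1]}|p(z_0+tb)|$, and this is bounded by $M(|z|^{\epsilon},f)$ for large $|z|$. Theorem~\ref{Thm 4.2} then gives that $f$ has no BWD.

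For $g$ the same argument applies, since $g=f+b$ is also of the form $z+\tilde p(z)$ with $\tilde p=p+b$ periodic. So neither $f$ nor $g$ has a BWD, and the first step is only needed if one prefers to transfer BWDs between the two functions.

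The main obstacle is the derivation of $f(w+b)=f(w)+b$. One must check carefully that vanishing on $f(\mathbb{C})$ forces $h\equiv0$; this holds because that set has accumulation points. After that, the bound along the line $L$ is routine.
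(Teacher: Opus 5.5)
Your argument is correct. The paper gives no proof of this statement: it only cites \cite{WangYang 2003} and then exhibits the pairs $f(z)=z+\lambda h(z)$, $g=f+w_0$ with $h$ periodic. So there is no internal proof to compare with.

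Your first step is sound. The identity $f(w+b)=f(w)+b$ holds on the open set $f(\mathbb{C})$, hence everywhere, so $f=z+p$ with $p$ a non-constant $b$-periodic entire function. Conversely, this identity gives $f\circ(f+b)=(f+b)\circ f$. So your step also shows that the paper's examples are exactly all pairs satisfying the hypothesis.

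Of your two endings, the one through Theorem~\ref{Thm 4.2} is the more economical. It needs only that $p$ is bounded on the ray $\{z_0+tb: t\ge 0\}$ and that $\log M(r,f)/\log r\to\infty$. It also yields the stronger conclusion that every Fatou component of $f$ (and of $g=z+(p+b)$) is simply connected. The ending through Theorem 1.3 of \cite{CaoWangZhao 2024}, with $s=p$, the theorem's $g\equiv 1$ and $\phi(z)=z$, is also valid in both of your cases. It does, however, lean on a much later result tailored to exactly such functions.

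A third ending uses your identity more directly and needs no growth comparison:
\begin{itemize}
\item From $f^n(z+b)=f^n(z)+b$, the Fatou set is invariant under $z\mapsto z+b$.
\item If $U$ were a BWD, then for large $n$ the bounded component $U_n$ contains a circle $\{|z|=\rho_n\}$ with $\rho_n>|b|/2$, by Theorem~\ref{increasing-annulus}.
\item This circle meets its translate by $b$, which lies in the Fatou component $U_n+b$. Hence $U_n+b=U_n$, which is impossible for a bounded set. The same applies to $g$, which satisfies $g(w+b)=g(w)+b$.
\end{itemize}

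Two small points. In the Theorem~\ref{Thm 4.2} route, take $\gamma$ to be a tail $\{z_0+tb: t\ge t_0(\epsilon)\}$, so that $|z|+M\le M(|z|^{\epsilon},f)$ holds at every point of $\gamma$ and not only for large $|z|$. Also, the opening appeal to Theorem~\ref{Benini-BWD} and the proof-by-contradiction framing can be dropped, since each ending shows directly that neither $f$ nor $g$ has a BWD.
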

	There are several examples satisfying the hypotheses of Theorem~\ref{permutable-BWD}. To see it, let $h$ be a periodic entire function with period $w_0$. Then consider $f(z)= z+\lambda h(z)$ for $\lambda \neq 0$ and $g(z)= f(z)+w_0$.
	It is seen that $f$ and $g$ are commuting. It is interesting to note that the  resulting functions may not be of bounded type. This is the case for $f(z)=\sin z$ or $e^z$. 
\par
Necessary and/or sufficient conditions for the existence of BWDs are obtained for various classes of meromorphic functions. 
Corollary~\ref{BWD-iff-MCFC} provides a necessary and sufficient condition for entire functions. A similar result is also known for meromorphic functions with finitely many poles (see Theorem~\ref{FP-BWD-IFF}). Sufficient conditions ensuring BWDs for functions with finitely many poles are also known (see Theorems~\ref{Thm2.7} and \ref{MC-IFF-BWD-FP}). On this background, the following question is natural.
\begin{Question}
Find a suffcient condition for meromorphic functions with infinitely many poles to have BWDs. 
	\end{Question}

\section{Disclosure statement}
The authors report that there is no competing interest to declare.
\section{Funding}
The first author is supported by the University Grants Commission, Govt. of India.
 \section{Data Availability statement} Data sharing not applicable to this article as no datasets were generated or analyzed during the current study.


\begin{thebibliography}{99}
	\bibitem{Baker 1963}
	I. N. Baker, \emph{Multiply-connected domains of normality in iteration theory}, Math. Z. 81 (1963) 206–214.
	\bibitem{Baker 1975}
	I. N. Baker,  \emph{The domains of normality of an entire function}, Ann. Acad. Sci. Fenn. Ser. A I Math.1 (1975), no. 2, 277–283.
	\bibitem{Baker 1976}
	I. N. Baker,  \emph{An entire function which has wandering domains}, J. Austral. Math. Soc. Ser. A 22 (1976), no. 2, 173–176.
	\bibitem{Baker 1984}
	I. N. Baker,\emph{ Wandering domains in the iteration of entire functions}, Proc. London Math. Soc. (3) 49 (1984), no. 3, 563–576.
	\bibitem{Baker 1985} I. N. Baker,\emph{ Some entire functions with multiply-connected wandering domains}, Ergodic Theory Dynam. Systems, 5 (1985), no. 2, 163–169.
	
	\bibitem{Baker 1988} I. N. Baker,\emph{ Infinite limits in the iteration of entire functions}, Ergodic Theory Dynam. Systems 8 (1988), no. 4, 503–507.
	 \bibitem{Beardon-Book} A. F. Beardon, \emph{Grad. Texts in Math. 132}, Springer-Verlag, New York, 1991.
	
	\bibitem{Baker Dominguez 2000} I. N. Baker, P. Dom\'inguez, \emph{ Some connectedness properties of Julia sets}, Complex Variables Theory Appl. 41 (2000), no.~4, 371--389.
	\bibitem{bk3}
 	I.~N. Baker, J.~Kotus, L.~Yinian, \emph{Iterates of meromorphic
	functions {III}: Preperiodic domains}, Ergodic Theory Dynam. Systems 11 (1991) 603--618.
\bibitem{Benini rippon Stallard 2015} A.~M. Benini, P.~J. Rippon, G.~M. Stallard,  \emph{Permutable entire functions and multiply connected wandering domains}, Adv. Math. 287 (2016), 451--462.
	\bibitem{Berg 1993} W. Bergweiler, \emph{Iteration of meromorphic functions}, Bull. Amer. Math. Soc. (N.S.) 29 (1993), no. 2, 151–188.
	\bibitem{Berg 2010-simply-multiply-wd}W. Bergweiler,  \emph{An entire function with simply and multiply connected wandering domains}, Pure Appl. Math. Q. {  7} (2011), no.~1, 107--120.
	\bibitem{BergEreme 1995}
	W. Bergweiler,  A. Eremenko, \emph{On the singularities of the inverse to a meromorphic function of finite order}, Rev. Mat. Iberoamericana 11 (1995), no. 2, 355–373.
	\bibitem{BergHinkk 1999}
	W. Bergweiler,  A. Hinkkanen, \emph{On semiconjugation of entire functions}, Math. Proc. Cambridge Philos. Soc. 126 (1999), no. 3, 565–574.
	\bibitem{Bergetal 1993}
	W. Bergweiler, M. Haruta, H.  Kriete, H. Meier, N. Terglane, \emph{On the limit functions of iterates in wandering domains}, Ann. Acad. Sci. Fenn. Ser. A I Math. 18 (1993), no. 2, 369–375.
	   \bibitem{BergRippStall 2008}
	W. Bergweiler, P.~J. Rippon and G.~M. Stallard, \emph{Dynamics of meromorphic functions with direct or logarithmic singularities}, Proc. Lond. Math. Soc. (3) 97 (2008), no.~2, 368--400.
	\bibitem{BergRippStall 2013}
	W. Bergweiler, P. J.  Rippon,   G. M. Stallard, \emph{Multiply connected wandering domains of entire functions}, Proc. Lond. Math. Soc. (3) 107 (2013), no. 6, 1261–1301.
	\bibitem{BergZheng 2011}
	W. Bergweiler, J. H. Zheng, \emph{On the uniform perfectness of the boundary of multiply connected
		wandering domains},  J. Aust. Math. Soc. 91 (2011) 289–311.
		\bibitem{Boas-1954} R.~P. Boas Jr., {\it Entire functions}, Academic Press, New York, 1954.
	\bibitem{Bolsch 1999}
	A. Bolsch, \emph{Periodic Fatou components of meromorphic functions}, Bull. London Math. Soc. 31 (1999), no. 5, 543–555.
	\bibitem{CaoWang 2002}
	C. L. Cao, Y. F. Wang, \emph{On the simple connectivity of Fatou components}, Acta Math. Sin. (Engl. Ser.) 18 (2002), no. 4, 625–630.
	\bibitem{CaoWangZhao 2024}
	C. Cao, Y. Wang, H. Zhao, \emph{Topological properties of certain iterated entire maps}, Anal. Math. Phys. 14 (2024), no. 2, Paper No. 18, 12 pp.
	\bibitem{ChakraChakrabortyNayak 2016}
	T. K. Chakra,  G. Chakraborty, T. Nayak,  \emph{Baker omitted value}, Complex Var. Elliptic Equ. 61 (2016), no. 10, 1353–1361.
	\bibitem{DasNayak 2024}
	S. Das, T. Nayak, \emph{Sum of the exponential and a polynomial:
		singular values and Baker wandering
		domains}, Complex Var. Elliptic Equ. 70 (2025), no. 10, 1831–1847.
		\bibitem{Dominguez 1997}
		P. Dom\'inguez-Soto, \emph{Connectedness properties of Julia sets of transcendental entire functions},  Complex Variables Theory Appl. {  32} (1997), no.~3, 199--215.
	\bibitem{Dominguez 1998}
	P. Dom\'inguez, \emph{Dynamics of transcendental meromorphic functions}, Ann. Acad. Sci. Fenn. Math. 23 (1998), no. 1, 225–250.
	\bibitem{Eremenko 1989}
	A. E. Eremenko, \emph{On the iteration of entire functions}, Dynamical systems and ergodic theory (Warsaw, 1986), 339–345.
	Banach Center Publ., 23
	PWN—Polish Scientific Publishers, Warsaw, 1989.
	\bibitem{EremenkoLyubich 1992}
	A. E. Eremenko, M. Y. Lyubich, \emph{Dynamical properties of some classes of entire functions}, Ann. Inst. Fourier (Grenoble)
	42 (1992), no. 4, 989–1020.  
	\bibitem{EremenkoLyubich 1987}
	A. Eremenko and M.~Y. Lyubich, \emph{Examples of entire functions with pathological dynamics}, J. London Math. Soc. (2) 36 (1987), no.~3, 458--468.
	\bibitem{GhoraNayakSahoo 2021}
	S. Ghora,  T. Nayak, S.  Sahoo,  \emph{On Fatou sets containing Baker omitted value}, J. Dynam. Differential Equations 35 (2021), no. 3, 2621–2639.
	\bibitem{Rempe2017}
	L. R. Gillen, D. Sixsmith,  \emph{Hyperbolic entire functions and the Eremenko-Lyubich class:
		Class B or not class B?}, Math. Z. 286 (2017), no. 3–4, 783–800.
	\bibitem{Hadamard1896}
	J. Hadamard, \emph{Sur les fonctions entieres}, Bull. Soc. Math. France, 24, (1896) pp. 186–187.
	\bibitem{Herring-1998}
	M.~E. Herring, \emph{Mapping properties of Fatou components}, Ann. Acad. Sci. Fenn. Math. { 23} (1998), no.~2, 263--274.
	\bibitem{Iversen 1914}
	F. Iversen, \emph{Recherches sur les fonctions inverses des fonctions meromorphes}, These de Helsingfors, 1914.
	\bibitem{Kisaka2025}M. Kisaka, \emph{Julia components of transcendental entire functions with multiply-connected wandering domains}, Comput. Methods Funct. Theory {  25} (2025), no.~2, 317--327.
	\bibitem{KisakaShishikura 2008}
	M. Kisaka, M. Shishikura, \emph{On multiply connected wandering domains of entire functions}, London Math. Soc. Lecture Note Ser., 348, Cambridge University Press, Cambridge, 2008.
	
	\bibitem{NayakZheng 2011}
	T. Nayak, J. H. Zheng, \emph{Omitted values and dynamics of transcendental meromorphic functions}, J. Lond. Math. Soc. vol. 83, no. 1, (2011), pp. 121–136.
	\bibitem{Nayak 2015}
	T. Nayak,  \emph{On Fatou components and omitted values}, Contemp. Math., 639, American Mathematical Society, Providence, RI (2015), 349–358.
	\bibitem{Noop 1947}
	K. Knopp,  \emph{ Theory and Application of Infinite Series}, Hafner Publishing Company, New York, 1947.
		\bibitem{Simon Sixsmith 2024} L. Pardo-Sim\'on,  D.~J. Sixsmith, \emph{Wandering domains with nearly bounded orbits}, Proc. Amer. Math. Soc.  152 (2024), no.~10, 4311--4323.
	\bibitem{RipponStallard 1999}
	P. J. Rippon, G. M. Stallard, \emph{Iteration of a class of hyperbolic meromorphic functions}, Proc. Amer. Math. Soc. 127 (1999), 3251--3258.
	
	\bibitem{RipponStallard 2000}
	P. J. Rippon, G. M. Stallard, \emph{On sets where iterates of a meromorphic function zip towards infinity}, Bull. London Math. Soc. 32 (2000), no. 5, 528–536.
	\bibitem{RipponStallard 2005}
	P. J. Rippon, G. M. Stallard, \emph{On questions of Fatou and Eremenko}, Proc. Amer. Math. Soc. 133 (2005), no. 4, 1119–1126.
	\bibitem{RipponStallard 2005(1)}
	P. J. Rippon, G. M. Stallard, \emph{Escaping points of meromorphic functions with a finite number of poles}, J. Anal. Math. 96 (2005), 225–245.
	\bibitem{RipponStallard 2008}
	P. J. Rippon, G. M. Stallard, \emph{On multiply connected wandering domains of meromorphic functions}, J. Lond. Math. Soc. (2) 77 (2008), no. 2, 405–423.
	
	\bibitem{RRRS 2011}
	G. Rottenfusser, J. R\"{u}ckert, L. Rempe, D. Schleicher, \emph{Dynamic rays of bounded-type entire
		functions}, Ann. of Math. 173 (2011) 77–125.
	\bibitem{Sixsmith2018} J. D. Sixsmith,  \emph{Dynamics in the Eremenko-Lyubich class}, Conform. Geom. Dyn. 22 (2018), 185–224.
	\bibitem{Sullivan 1985}
	D. Sullivan, \emph{Quasiconformal homeomorphisms and dynamics. I. Solution of the Fatou-Julia problem on wandering domains}, Ann. of Math. (2), vol. 122, no. 3 (1985) pp. 401–418.
	\bibitem{WangYang 2003}
	X. Wang, C. C. Yang, \emph{On the Fatou components of two permutable transcendental entire functions}, J. Math. Anal. Appl. 278 (2003), no. 2, 512–526.
	\bibitem{asymptoticvalue-book}
	G. H. Zhang,  \emph{Theory of entire and meromorphic functions. Deficient and asymptotic values and singular directions}, Translated from the Chinese by Yang, C.-C., Translations of Mathematical Monographs, 122, Amer. Math. Soc., Providence, RI, 1993.
	\bibitem{Zheng 2000}
	J. H. Zheng, \emph{Singularities and wandering domains in iteration of meromorphic functions}, Illinois
	J. Math. 44 (2000) 520–530.
	\bibitem{Zheng 2002}
	J. H. Zheng, \emph{On uniformly perfect boundary of stable domains in iteration of meromorphic functions II},
	Math. Proc. Cambridge Philos. Soc. 132 (2002), no. 3, 531–544.
	\bibitem{Zheng 2006}
	J. H. Zheng, \emph{On multiply-connected Fatou components in iteration of meromorphic functions},
	J. Math. Anal. Appl. 313 (2006), no. 1, 24–37.
\end{thebibliography}
\end{document}